\documentclass[12pt]{amsart}

\usepackage{amssymb, amsmath, xcolor}
\usepackage{import}
\usepackage{tikz}
\usetikzlibrary{shapes, patterns}
\tikzset{filled/.style={minimum width=5pt,inner sep=0pt,circle,fill=black}}
\usetikzlibrary{calc}
\usepackage{subcaption}
\usepackage{mathtools}
\usepackage[utf8]{inputenc}
\usepackage{float}
\usepackage{todonotes}
\usepackage[square,numbers]{natbib}
\usepackage{qtree}

\newtheorem{theorem}{Theorem}[section]
\newtheorem{lemma}[theorem]{Lemma}

\newtheorem{question}[theorem]{Question}

\theoremstyle{definition}
\newtheorem{definition}[theorem]{Definition}

\theoremstyle{remark}
\newtheorem{remark}[theorem]{Remark}

\numberwithin{equation}{section}
\numberwithin{figure}{section}

\renewcommand{\mod}{\operatorname{mod}}
\newcommand{\N}{\mathbb{N}}
\newcommand{\Z}{\mathbb{Z}}

\title[Chromatic Numbers Local Constraints]{Chromatic Numbers with Closed Local Modular Constraints}

\author[Herden, Meddaugh, Sepanski, $\ldots$]{Daniel Herden, Jonathan Meddaugh, Mark R. Sepanski, William Clark, Adam Kraus, Ellie Matter, Kyle Rosengartner, Elyssa Stephens, John Stephens, Mitchell Minyard, Kingsley Michael, Maricela Ramirez}

\thanks{The first author was supported by Simons Foundation grant MPS-TSM-00007788.
	The second author was supported by a grant from the Simons Foundation (960812, JM)}

\address{
All authors:
Department of Mathematics,
Baylor University,
Sid Richardson Building,
1410 S.~4th Street,
Waco, TX 76706, USA}

\email{daniel\_herden@baylor.edu,  jonathan\_meddaugh@baylor.edu, mark\_sepanski@baylor.edu, william\_clark2@baylor.edu, adam\_kraus1@baylor.edu, ellie\_carr3@baylor.edu, kyle\_rosengartner1@baylor.edu, ellie\_cirillo1@baylor.edu, john\_stephens2@baylor.edu, mitch\_minyard1@baylor.edu, kingsley\_michael1@baylor.edu, maricela\_ramirez1@baylor.edu}

\date{\today}

\begin{document}

\keywords{odd-sum colorings, odd-sum chromatic number}
\subjclass[2020]{Primary: 05C78; Secondary: 05C25}

\begin{abstract}
    Generalizing the notion of odd-sum colorings, a $\Z$-labeling of a graph $G$ is called a \emph{closed coloring with remainder $k\mod n$} if the closed neighborhood label sum of each vertex is congruent to $k\mod n$.
    If such colorings exist, we write $\chi_{n,k}(G)$ for the minimum number of colors used for a closed coloring with remainder $k\mod n$ such that no neighboring vertices have the same color. 
    General estimates for $\chi_{n,k}(G)$ are given along with evaluations of $\chi_{n,k}(G)$ for some finite and infinite order graphs. 
\end{abstract}

\maketitle

\tableofcontents

\section{Introduction}

The chromatic number $\chi(G)$, the minimum number of colors needed for a proper coloring of the graph $G$, is one of the most well studied invariants of graph theory and 
still an object of active research 
\cite{cambie2024removing, campena2023graph, cervantes2023chromatic, char2023improved, haxell2023large, heckel2021non, isaev2021chromatic, matsumoto2021chromatic}.
Two related concepts, known as \emph{odd colorings} and \emph{odd-sum colorings}, have also been well studied \cite{RemarksOdd, CaroYairPetru2023, CranstonOddSum, Cranston1Planar, KnauerBoundedness}. 
Here, a coloring is called an odd coloring if for every vertex $x$ there exists some color that appears an odd number of times among the vertices in its (open) neighborhood $N(x)$, while a 
$\Z$-labeling is called an odd-sum coloring if the sum of the labels over every closed neighborhood $N(x) \cup \{x\}$ is congruent to $1\mod 2$. In that case, $\chi_{\text{o}}(G)$ and $\chi_{\text{os}}(G)$ denote the minimum numbers of colors used for a proper odd coloring and proper odd-sum coloring, respectively.

In \cite{NeighborhoodParity}, Petru\v{s}evski
and \v{S}krekovski introduced the odd chromatic number $\chi_{\text{o}}(G)$ of a graph $G$ and found the general upper bound $\chi_{\text{o}}(G)\le 9$ for planar graphs. Petr and Portier \cite{PetrChiPlanar} tightened this bound to $\chi_{\text{o}}(G)\le 8$, and Cranston~\cite{CranstonSparse} further sharpened this bound based on the average degree of the graph~$G$. The odd-sum chromatic number $\chi_{\text{os}}(G)$ was introduced in \cite{CaroYairPetru2023} to obtain tight upper-bounds for planar, outerplanar, and bipartite graphs as well as various inequalities for general nonempty graphs, including $\chi_{\text{os}}(G)\le 2\chi(G)$.

In this work, we study a generalization of this concept. We say that a $\Z$-labeling of $G$ is a
\emph{closed coloring with remainder $k\mod n$} if the sum of the labels over the \emph{closed neighborhood} $N[x]=N(x)\cup \{x\}$ of each vertex $x$ is congruent to $k\mod n$. If such colorings exist, 
the \emph{closed chromatic number of $G$ with remainder $k \mod n$}, written
$\chi_{n,k}(G)$, is the minimum number of colors used for a proper closed coloring with remainder $k\mod n$. With this notation, $\chi_{\text{os}}(G) = \chi_{2,1}(G)$.

Definitions are given in Section~\ref{sec: defs}. Basic results and inequalities are given in Section~\ref{sec: basic results}. Finite order examples, including complete graphs, stars, friendship graphs, paths, complete bipartite graphs, regular graphs, and cycles are studied in Section~\ref{sec: finite order exs}. Infinite order examples, including the complete $m$-ary rooted tree and the regular tilings of the plane are studied in Section~\ref{sec: infinite order exs}. Certain finite trees are examined in Section~\ref{sec: trees}. In these cases, existence of $\chi_{n,k}(G)$ can be quite subtle, see Theorem \ref{thm: perfect binary trees} on rooted perfect binary trees. Finally, Section~\ref{sec: gen petersen} studies existence of $\chi_{n,k}(G)$ for generalized Petersen graphs.

\section{Definitions}\label{sec: defs}

We write $\N$ for the nonnegative integers and $\Z^+$ for the positive ones. 
For $a,b \in \Z$, not both zero, we write $(a,b)$ for the greatest common divisor of $a$ and $b$. 
For $k\in\Z$ and $n\in\Z^+$, we write $[k]$ for the image of $k$ in $\Z_n$.

We write $G=(V,E)$ for a simple graph with vertex set $V$ and edge set $E$ and \[\ell:V\longrightarrow \Z\] for a \emph{coloring} or \emph{labeling} of the vertices by $\Z$.
The \emph{order} of a labeling, $|\ell|$, is the size of its image. 

If $v\in V$, 
the \emph{open neighborhood} of $v$, $N(v)$, consists of all vertices adjacent to $v$ and 
the \emph{closed neighborhood} of $v$, $N[v]=N(v)\cup \{v\}$, consists of $v$ and all vertices adjacent to $v$. 
A labeling is called \emph{proper} if $\ell(v)\neq\ell(w)$ for each $v\in V$ and each $w\in N(v)$. The \emph{chromatic number} of $G$, $\chi(G)$, is the minimum order of a proper labeling of $G$.

In the following definition, recall that our labelings $\ell$ have codomain~$\Z$.
\begin{definition}\label{def: coloring with remainder k mod n}
    Let $k\in\Z$ and $n\in\Z^+$. 
    
    A \emph{closed coloring with remainder $k\mod n$} of $G$ is a labeling $\ell$ of $G$ so that, for each $v\in V$,
    \[ \sum_{w\in N[v]} \ell(w) \equiv k \mod n. \]
    If no proper closed coloring with remainder $k\mod n$ of $G$ exists, we say that $\chi_{n,k}(G)$ does not exist.
    Otherwise, if proper closed colorings with remainder $k\mod n$ of $G$ exist  of finite order, the \emph{closed chromatic number of $G$ with remainder $k \mod n$}, written \[\chi_{n,k}(G),\] is the minimum order of a proper closed coloring with remainder $k\mod n$ of $G$. If such colorings exist only of infinite order, we write \mbox{$\chi_{n,k}(G)=\infty$}. 
\end{definition}
Note that $\chi_{n,k}(G)$ only depends on $n$ and the residue class $k\mod n$.
Moreover, the case of $\chi_{2,1}(G)$ in Definition \ref{def: coloring with remainder k mod n} coincides with the notion of the \emph{odd-sum chromatic number} of $G$, $\chi_{\text{os}}(G)$, introduced in \cite{CaroYairPetru2023}. 

\section{Basic Results}\label{sec: basic results}

When $\chi_{n,k}(G)$ exists, we certainly have
\[ \chi(G) \leq \chi_{n,k}(G).\]
However, 
as seen from the following theorem, the case of $k=0$ does not provide a new invariant. 
\begin{theorem}
    Let $n\in\Z^+$. If $\chi(G)$ is finite, then \[\chi_{n,0}(G) = \chi(G).\]
\end{theorem}

\begin{proof} 
    It suffices to provide a coloring that shows $\chi_{n,0}(G) \leq \chi(G)$. For this, choose a minimal order proper labeling $\ell:V\longrightarrow \Z$ of $G$. Define a new labeling $\ell'$ of $G$ by $\ell'(v) = n\ell(v)$ for each $v\in V$. As this is a proper closed coloring with remainder $0\mod n$ of $G$, we are done.
\end{proof}

Accordingly, for $\chi_{n,k}(G)$, we will often only consider the case of $k\not\equiv 0 \mod n$ for the rest of this paper.

By canceling common summands, we immediately get the following result on symmetric differences. 
\begin{lemma}\label{lem: venn}
    If $\ell$ is a closed coloring with remainder $k\mod n$ of $G=(V,E)$ and $v,w\in V$, then
    \[ \sum_{u\in N[v]\backslash N[w]} \ell(u)
        \equiv \sum_{u\in N[w]\backslash N[v]} \ell(u) \mod n. \]
\end{lemma}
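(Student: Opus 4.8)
The plan is to start from the two defining congruences
\[ \sum_{u\in N[v]} \ell(u) \equiv k \mod n
   \qquad\text{and}\qquad
   \sum_{u\in N[w]} \ell(u) \equiv k \mod n, \]
which hold because $\ell$ is a closed coloring with remainder $k\mod n$. Subtracting the second from the first gives
\[ \sum_{u\in N[v]} \ell(u) - \sum_{u\in N[w]} \ell(u) \equiv 0 \mod n. \]
The only work left is bookkeeping on the two index sets.

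Next I would split each closed neighborhood according to membership in the other: $N[v] = \bigl(N[v]\cap N[w]\bigr) \sqcup \bigl(N[v]\setminus N[w]\bigr)$, and similarly for $N[w]$. Substituting these partitions into the difference above, the two copies of $\sum_{u\in N[v]\cap N[w]} \ell(u)$ cancel, leaving
\[ \sum_{u\in N[v]\setminus N[w]} \ell(u) - \sum_{u\in N[w]\setminus N[v]} \ell(u) \equiv 0 \mod n, \]
which rearranges to exactly the claimed identity. Since all sums are finite integer sums over vertex sets, every manipulation is legitimate.

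There is essentially no obstacle here: the lemma is a one-line consequence of canceling common summands, as the statement itself advertises ("By canceling common summands"). The only thing to be slightly careful about is making the set-theoretic partition explicit so the cancellation is visibly justified, rather than hand-waved; I would spell out the disjoint-union decomposition once and then let the arithmetic follow.
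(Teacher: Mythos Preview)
Your proposal is correct and is exactly the argument the paper has in mind: the paper simply says the lemma follows ``by canceling common summands,'' and your subtraction of the two defining congruences followed by the disjoint-union decomposition of each closed neighborhood is precisely that cancellation made explicit.
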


Next is a result on elementary operations.
\begin{theorem}\label{thm: k mod n units and divisor first relations}
    Let $k,u,v,d,c,k_1,k_2\in\Z$ and $n\in\Z^+$. If the right-hand side of each displayed equation below exists, we have the following:
\begin{itemize}    
    \item If $[u]$ is a unit in $\Z_n^\times$, then
        \[ \chi_{n,uk}(G) = \chi_{n,k}(G). \] 
    \item More generally, 
        \[ \chi_{n,vk}(G) \leq \chi_{n,k}(G). \] 
    \item If $d$ is a common divisor of $k$ and $n$, then
        \[ \chi_{n,k}(G) \leq \chi_{\frac{n}{d}, \frac{k}{d}}(G).\]
    \item If $d$ divides $n$, then
        \[ \chi_{ \frac{n}{d}, k}(G) \leq \chi_{n,k}(G).\]
    \item If $G$ admits a constant closed coloring with remainder $c \mod n$, then
        \[ \chi_{n,k - c}(G) = \chi_{n,k}(G).\]
    \item Finally,
        \[ \chi_{n,k_1+k_2}(G) \leq \chi_{n,k_1}(G) \chi_{n,k_2}(G).\]
\end{itemize}
\end{theorem}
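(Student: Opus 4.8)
The plan is to superpose optimal colorings for the remainders $k_1$ and $k_2$, rescaling one of them by a unit-like factor to restore properness. Assume $\chi_{n,k_1}(G)$ and $\chi_{n,k_2}(G)$ both exist (if either fails to exist or is infinite the claimed bound is vacuous or trivial), and fix proper closed colorings $\ell_1,\ell_2$ of $G$ with remainders $k_1\bmod n$ and $k_2\bmod n$ whose orders realize $\chi_{n,k_1}(G)$ and $\chi_{n,k_2}(G)$. For an integer $M$ to be chosen, put $\ell=M\ell_1+\ell_2$. Summing over a closed neighborhood and using that $\ell_1,\ell_2$ are closed colorings gives, for every $v\in V$,
\[ \sum_{w\in N[v]}\ell(w)=M\sum_{w\in N[v]}\ell_1(w)+\sum_{w\in N[v]}\ell_2(w)\equiv Mk_1+k_2\bmod n, \]
so provided $M\equiv 1\bmod n$, the labeling $\ell$ is a closed coloring with remainder $k_1+k_2\bmod n$.

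Next I would force $\ell$ to be proper by taking $|M|$ large. Suppose $v\in V$ and $w\in N(v)$ with $\ell(v)=\ell(w)$; then $M\bigl(\ell_1(v)-\ell_1(w)\bigr)=\ell_2(w)-\ell_2(v)$. Since $\ell_1$ is proper, $\ell_1(v)-\ell_1(w)\neq 0$, so the left side is a nonzero multiple of $M$, while the right side lies in the finite set $S=\{b-b':b,b'\in\operatorname{im}\ell_2\}$. The arithmetic progression $1+n\Z$ is infinite, so we may choose $M\equiv 1\bmod n$ with $|M|$ strictly larger than $\max_{s\in S}|s|$; then no nonzero multiple of $M$ lies in $S$, a contradiction. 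Hence $\ell$ is a proper closed coloring of $G$ with remainder $k_1+k_2\bmod n$.

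For the order bound, observe that $\operatorname{im}\ell\subseteq\{Ma+b:a\in\operatorname{im}\ell_1,\ b\in\operatorname{im}\ell_2\}$, a set of size at most $|\ell_1|\,|\ell_2|=\chi_{n,k_1}(G)\,\chi_{n,k_2}(G)$, so $\chi_{n,k_1+k_2}(G)\le|\ell|\le\chi_{n,k_1}(G)\,\chi_{n,k_2}(G)$. The only delicate point is the dual requirement on $M$: congruence to $1$ modulo $n$ is what keeps the remainder equal to $k_1+k_2$, and largeness of $|M|$ is what keeps the superposition proper, which is exactly why the naive choice $\ell=\ell_1+\ell_2$ need not work. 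I do not expect any further obstacle.
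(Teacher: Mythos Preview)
Your proposal treats only the sixth item; the first five are left unaddressed. Each of those needs only a line or two: for the fourth, any closed coloring with remainder $k\bmod n$ is automatically one with remainder $k\bmod\tfrac{n}{d}$; for the third, multiply an optimal coloring by $d$; for the second, multiply by $v$; the first follows by applying the second with $v=u$ and with $v$ an inverse of $u$ modulo $n$; for the fifth, add or subtract the given constant coloring to any optimal one.

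For the sixth item your argument is correct and is essentially the paper's device made concrete. The paper fixes an injective map $\iota:\Z\times\Z\to\Z$ with $\iota(z_1,z_2)\equiv z_1+z_2\pmod n$ and sets $\ell'(v)=\iota(\ell_1(v),\ell_2(v))$; injectivity of $\iota$ gives properness immediately (using only that $\ell_1$ is proper), and the order bound follows at once. Your map $(a,b)\mapsto Ma+b$, with $M\equiv 1\pmod n$ and $|M|$ exceeding the diameter of $\operatorname{im}\ell_2$, is precisely such an injective map when restricted to $\operatorname{im}\ell_1\times\operatorname{im}\ell_2$, so the two arguments run on the same mechanism. Your explicit construction has the virtue of being self-contained; the paper's abstract formulation is slightly shorter because properness requires no separate contradiction step.
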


\begin{proof}
    For the fourth statement, 
    let $\ell$ be a minimal order proper closed coloring with remainder $k\mod n$ of $G$. As this is also a proper closed coloring with remainder $k\mod \frac{n}{d}$ of $G$, we are done.
    For the third statement, let $\ell$ be a minimal order proper closed coloring with remainder $\frac{k}{d}\mod \frac{n}{d}$ of $G$. Define a new coloring $\ell'$ of $G$ by $\ell'(v) = d\ell(v)$ for each $v\in V$. As this is a proper closed coloring with remainder $k\mod n$ of $G$, we are done.
    The first statement follows by multiplying appropriate closed colorings of $G$ by $u$ or its inverse $\mod n$, and the second statement follows similarly.
    For the fifth statement, note that adding and subtracting the constant closed coloring leads from any minimal order proper closed coloring with remainder $k\mod n$ of $G$ to proper closed colorings of $G$ with remainders $(k+c)\mod n$ and $(k-c) \mod n$, respectively. For the last statement, let $\ell_1$ and $\ell_2$ be minimal order proper closed colorings of $G$ with remainders $k_1\mod n$ and $k_2\mod n$, respectively. Fix any injective map $\iota:\Z\times \Z \to \Z$ such that $\iota(z_1,z_2) \equiv (z_1+z_2) \mod n$ for all $z_1,z_2\in \Z$, and define $\ell'(v) = \iota(\ell_1(v), \ell_2(v))$ for each $v\in V$ for a proper closed coloring $\ell'$ with remainder $(k_1+k_2)\mod n$ of $G$.
\end{proof}

Our bound on $\chi_{n,k_1+k_2}(G)$ in the last statement of Theorem \ref{thm: k mod n units and divisor first relations} seems rather rough. In particular, it is natural to ask the following question:

\begin{question}
    Let $k_1,k_2\in\Z$ and $n\in\Z^+$, and let $\chi_{n,k_1}(G)$ and $\chi_{n,k_2}(G)$ exist. Does this imply
    $\chi_{n,k_1+k_2}(G) \leq \chi_{n,k_1}(G) +\chi_{n,k_2}(G)$?
\end{question}
          
Next we turn to a theorem on existence. We will see below, in Theorem \ref{thm: k mod n calc for C_m},
that $\chi_{n,k}(G)$ need not exist.


\begin{theorem}\label{thm: remainder k mod n existence criteria and upper bound by n chi}
    Let $k\in\Z$ and $n\in\Z^+$, and let $\chi(G)$ be finite. Then a proper closed coloring with remainder $k\mod n$ of $G$ exists if and only if a closed coloring with remainder $k\mod n$ of $G$ exists. In that case,
    \[ \chi_{n,k}(G) \leq n \, \chi(G). \]
    More precisely, if $\ell$ is a closed coloring with remainder $k \mod n$ of $G$, then
    \[ \chi_{n,k} \leq |\ell| \, \chi(G).\]
\end{theorem}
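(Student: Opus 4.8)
The plan is to combine a given closed coloring $\ell$ with remainder $k \bmod n$ (which provides the "modular'' part) with a minimal proper coloring $\ell_0$ of $G$ (which provides the "properness'' part), using the product construction already established in the last bullet of Theorem~\ref{thm: k mod n units and divisor first relations}. Concretely, I would first observe that a proper coloring $\ell_0 : V \to \Z$ of order $\chi(G)$ is trivially a proper closed coloring with remainder $0 \bmod n$ of $G$ only after scaling: set $\ell_0' (v) = n\,\ell_0(v)$, so that $\ell_0'$ is a proper closed coloring with remainder $0 \bmod n$ of $G$ with $|\ell_0'| = \chi(G)$ (this is exactly the construction in the first theorem of Section~\ref{sec: basic results}).

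Next I would invoke an injective "interleaving'' map $\iota : \Z \times \Z \to \Z$ with $\iota(z_1,z_2) \equiv z_1 + z_2 \bmod n$, as in the proof of the last statement of Theorem~\ref{thm: k mod n units and divisor first relations}, and define $\ell'(v) = \iota\big(\ell(v),\ \ell_0'(v)\big)$ for each $v \in V$. Since $\iota$ is injective, $\ell'$ separates a pair of vertices whenever \emph{either} coordinate does; in particular, because $\ell_0'$ is proper, $\ell'$ is proper. Because $\iota(z_1,z_2)\equiv z_1+z_2 \bmod n$ and $\ell_0'$ contributes $0 \bmod n$ on every closed neighborhood while $\ell$ contributes $k \bmod n$, the closed neighborhood sums of $\ell'$ are $\equiv k \bmod n$. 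Thus $\ell'$ is a proper closed coloring with remainder $k \bmod n$ of $G$.

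It remains to bound $|\ell'|$. The image of $\ell'$ is contained in $\iota\big(\ell(V) \times \ell_0'(V)\big)$, which has size at most $|\ell(V)|\cdot|\ell_0'(V)| = |\ell| \cdot \chi(G)$. Hence $\chi_{n,k}(G) \le |\ell'| \le |\ell|\,\chi(G)$, which is the "more precise'' claim; taking $\ell$ itself to be a minimal-order closed coloring and bounding $|\ell| \le n$ (one may assume all labels lie in $\{0,1,\dots,n-1\}$, since only residues mod $n$ matter for the closed-coloring condition — though one must be slightly careful that reducing mod $n$ does not destroy properness, so instead one uses that the construction of Theorem~\ref{thm: remainder k mod n existence criteria and upper bound by n chi} itself, applied with a coloring of order $\le n$, already yields the bound) gives $\chi_{n,k}(G) \le n\,\chi(G)$. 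The only genuine content beyond bookkeeping is the forward direction of the existence equivalence: given \emph{any} closed coloring $\ell$ with remainder $k \bmod n$ (not necessarily proper), the product construction above upgrades it to a \emph{proper} one without changing the modular behavior, and this is precisely where the factor $\chi(G)$ enters. I expect the main obstacle — really the only subtle point — to be verifying cleanly that the existence of an arbitrary (possibly improper) closed coloring suffices, i.e.\ that properness can always be layered on afterward via $\ell_0'$; once that is framed as a product with the scaled proper coloring, the rest is immediate from the already-proven multiplicativity bound.
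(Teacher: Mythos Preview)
Your argument is correct and is essentially the same as the paper's. The only cosmetic difference is that the paper avoids invoking the abstract map $\iota$: it simply normalizes the range of $\ell$ to lie in $[0,n-1]$ and the range of the minimal proper coloring to lie in $n\Z$, then takes the ordinary sum $\ell+\ell'$, which is automatically injective on pairs (so properness is inherited) and has image of size at most $|\ell|\,\chi(G)$. Your parenthetical worry about ``reducing $\ell$ mod $n$ destroying properness'' is unnecessary, since $\ell$ was never assumed proper; reducing it mod $n$ only helps, and this is exactly what the paper does to get $|\ell|\le n$.
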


\begin{proof}
    Let $\ell$ be a closed coloring with remainder $k\mod n$ of $G$ and let $\ell'$ be a minimal proper labeling of $G$. We may assume that the range of $\ell$ sits in $[0,n-1]$, and we may assume that the range of $\ell'$ sits in $n\Z$. Then the labeling $\ell + \ell'$ is a proper closed coloring with remainder $k\mod n$ of $G$. As its order is bounded by $|\ell| \chi(G)$ and since $|\ell|\leq n$, we are done.
\end{proof} 

For our next discussion, we recall the definition of an efficient dominating set from \cite[Section 3]{bakkerIEDS}.

\begin{definition}\label{def: independent and dominating sets}
    Let $U\subseteq V$ for a graph, $G =(V,E)$. We say that $U$ is
    \begin{itemize}
        \item an \emph{efficient dominating set} if $|N(v)\cap U|=1$ for every $v\in V\backslash U$.
        \item an \emph{independent efficient dominating set} (IEDS) if $|N[v]\cap U|=1$ for every $v\in V$, i.e., it is an independent set and an efficient dominating set.
    \end{itemize}
    We say that a graph $G$ \emph{admits an IEDS} if such a collection of vertices exists for $G$.
\end{definition}

It has been shown by Bakker and van Leeuwen \cite[Theorem 3.3]{bakkerIEDS} that determining whether an arbitrary graph $G$ admits an IEDS is NP-complete. In the same paper, they also provide a linear-time algorithm that determines whether any given finite tree admits an IEDS. 

Notice that our Theorem \ref{thm: k mod n units and divisor first relations} shows that $\chi_{n,k}(G) < \infty$ for all $k \in \Z$ if and only if $\chi_{n,1}(G) < \infty$. The following question asks if this is nearly equivalent to determining whether $G$ admits an IEDS.

\begin{question}\label{conj: IEDS iff k,n finite}
    For a graph $G$, does $\chi_{n,k}(G) < \infty$ hold for all $k \in \Z$ and $n\in\Z^+$ if and only if \mbox{$\chi(G)<\infty$} and $G$ admits an IEDS?
\end{question}

Lemma \ref{lem: Colorings given IEDS} proves the backwards direction of this question.  
The condition that $\chi(G)<\infty$ is necessary as $K_\infty$ admits an IEDS, via a single vertex, and $\chi(K_\infty)=\infty$, but $\chi_{n,k}(G) < \infty$ fails.


\begin{lemma}\label{lem: Colorings given IEDS}
    If $G=(V,E)$ admits an IEDS $U\subseteq V$ and $\chi(G)<\infty$, then $\chi_{n,k}(G)$ exists for all $k\in \Z$ and $n\in\Z^+$. In particular, 
    \[ \chi(G) \leq \chi_{n,k}(G) \leq \chi(G)+1.\]
    If $U$ can be colored with a single color in some minimal proper labeling of $G$ such that $U$ contains all vertices of that color, then the inequality improves to \[\chi_{n,k}(G) = \chi(G).\]
    
\end{lemma}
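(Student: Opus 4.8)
The plan is to start from a minimal proper labeling of $G$, scale it by $n$ so that every closed neighborhood sum is $\equiv 0\bmod n$, and then exploit the defining property of an IEDS to shift all of those sums simultaneously. The key point is that if $U$ is an IEDS then $|N[v]\cap U|=1$ for every $v$, so replacing the label of every vertex of $U$ by one common value $a$ increases each sum $\sum_{w\in N[v]}\ell(w)$ by exactly $a$; taking $a\equiv k\bmod n$ therefore makes every closed neighborhood sum $\equiv k\bmod n$.

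First I would dispose of the case $n\mid k$: here $\chi_{n,k}(G)=\chi_{n,0}(G)$ equals $\chi(G)$ by the result identifying $\chi_{n,0}$ with $\chi$, which gives existence and all the claimed bounds at once. So assume $n\nmid k$, fix a minimal proper labeling $\ell'$ of $G$, and define $\ell(v)=n\ell'(v)$ for $v\in V\setminus U$ and $\ell(u)=k$ for $u\in U$. Properness is the only step requiring care: $U$ is independent, so no edge lies inside $U$; an edge between $u\in U$ and $v\notin U$ causes no conflict since $\ell(u)=k$ is not a multiple of $n$ while $\ell(v)$ is; and edges inside $V\setminus U$ are handled by properness of $\ell'$. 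The closed-sum condition is immediate from $|N[v]\cap U|=1$. Since the image of $\ell$ lies in $\{k\}\cup\{n\ell'(v):v\in V\}$, its order is at most $\chi(G)+1$, so $\chi_{n,k}(G)$ exists, and together with the always-valid bound $\chi(G)\le\chi_{n,k}(G)$ we obtain $\chi(G)\le\chi_{n,k}(G)\le\chi(G)+1$.

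For the sharper statement I would run the identical construction, but starting from a minimal proper labeling in which $U$ is exactly one color class. After relabeling colors we may assume this labeling has image $\{0,1,\dots,\chi(G)-1\}$ with $U=(\ell')^{-1}(0)$; then $\{n\ell'(v):v\notin U\}=\{n,2n,\dots,(\chi(G)-1)n\}$ has $\chi(G)-1$ elements, and adjoining the value $k$ (which is not a multiple of $n$, hence distinct from all of these and from $0$) yields exactly $\chi(G)$ labels. Thus $\chi_{n,k}(G)\le\chi(G)$, and with the lower bound this forces $\chi_{n,k}(G)=\chi(G)$.

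The whole argument is routine once the IEDS viewpoint is adopted; the only genuinely delicate points are the properness check after the overwrite and the need to treat $n\mid k$ separately, since it is exactly the hypothesis $n\nmid k$ that guarantees the value $k$ painted onto $U$ stays distinct from the multiples of $n$ used on $V\setminus U$. I expect that bookkeeping to be the main, and quite minor, obstacle.
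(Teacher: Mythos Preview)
Your proof is correct and follows essentially the same approach as the paper: overwrite the vertices of the IEDS $U$ with the single value $k$, while keeping the remaining vertices labeled by multiples of $n$ coming from a minimal proper coloring. The only cosmetic difference is that the paper avoids your case split on $n\mid k$ by taking the range of the initial proper labeling to lie in $n\Z\cap(k,\infty)$, so that the label $k$ is automatically distinct from every label on $V\setminus U$ regardless of the residue of $k$.
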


\begin{proof}
    Let $U$ be an IEDS for $G$.
    Write $\ell$ for a minimal proper labeling of $G$ and suppose its range lies in $n\Z \cap (k, \infty)$. The proof is finished by defining a closed coloring $\ell'$ with remainder $k\mod n$ of $G$ via
    \[
        \ell'(v) = 
        \begin{cases}
            \ell(v) & \text{if } v\in V\backslash U\\
            k & \text{if } v\in U.\qquad\qedhere
        \end{cases}
    \]
\end{proof}

\section{Finite Order Examples}\label{sec: finite order exs}

We begin with the \emph{complete graph on $m$ vertices}, $K_m$, the \emph{star on $m+1$ vertices}, $S_m$, and the \emph{friendship graph}, $F_m$, consisting of $m$ copies of $C_3$ joined at a single vertex.

\begin{theorem}
    Let $k\in \Z$ and $n,m\in\Z^+$. Then 
    \[ \chi_{n,k}(K_m) = m, \]
    \[ \chi_{n,k}(S_m) = 2,\]
    \[ \chi_{n,k}(F_m) = 3.\]
\end{theorem}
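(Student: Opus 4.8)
The plan is to observe that all three graphs possess a \emph{universal vertex} (one adjacent to every other vertex): any vertex of $K_m$, and the center of $S_m$ and of $F_m$. For a universal vertex $v$ of a graph $G$, the singleton $U=\{v\}$ is an IEDS: it is trivially independent, and for every $w\in V$ one has $v\in N[w]$ (either $w=v$, or $w$ is adjacent to the universal vertex $v$), so $|N[w]\cap U|=1$. Since $\chi(K_m)=m$, $\chi(S_m)=2$, and $\chi(F_m)=3$ are all finite, Lemma~\ref{lem: Colorings given IEDS} applies in each case and yields $\chi(G)\le\chi_{n,k}(G)\le\chi(G)+1$, in particular existence of $\chi_{n,k}(G)$.

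To upgrade the upper bound to an equality I would invoke the refined conclusion of Lemma~\ref{lem: Colorings given IEDS}: it suffices to exhibit, for each graph, a minimal proper labeling in which $v$ is the only vertex receiving its color, so that $U=\{v\}$ is monochromatic and contains its entire color class. For $K_m$ this is automatic, as every proper labeling of $K_m$ uses pairwise distinct colors. For $S_m$ and $F_m$, take a minimal proper labeling of $G-v$ — one color for $S_m$ (the leaves are independent) and two colors for $F_m$ (here $F_m-v$ is a disjoint union of $m$ edges) — and extend it by giving $v$ a brand-new color; this is a proper labeling of $G$ with $\chi(G-v)+1=\chi(G)$ colors in which $v$ alone has its color. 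Hence $\chi_{n,k}(K_m)=\chi(K_m)=m$, $\chi_{n,k}(S_m)=\chi(S_m)=2$, and $\chi_{n,k}(F_m)=\chi(F_m)=3$.

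For completeness one records the matching lower bounds: once $\chi_{n,k}(G)$ is known to exist, the inequality $\chi(G)\le\chi_{n,k}(G)$ from Section~\ref{sec: basic results}, together with $\chi(K_m)=m$ (it is a clique), $\chi(S_m)=2$, and $\chi(F_m)=3$ (it contains a triangle, and the explicit $3$-coloring above realizes this), pins the three values from below.

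There is no genuine obstacle here; the only point requiring a moment's attention is checking the hypothesis of the equality clause of Lemma~\ref{lem: Colorings given IEDS}, namely producing a minimal proper coloring whose restriction to the chosen IEDS captures a whole color class — which, for a singleton IEDS at a universal vertex, reduces to coloring that vertex last with a fresh color. As an alternative to citing the lemma, one could argue directly: in each of these graphs every closed neighborhood other than $N[v]=V$ has a simple common description, so the system of congruences collapses to a single linear condition relating the label of $v$ to the label sums over the color classes, which is easily met while keeping the labeling proper and using only $\chi(G)$ colors.
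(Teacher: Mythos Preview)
Your proof is correct and follows essentially the same route as the paper's: both identify a single universal vertex as an IEDS and invoke Lemma~\ref{lem: Colorings given IEDS}. You have simply spelled out the verification of the equality clause (that the chosen vertex can be made its own color class in a minimal proper coloring) which the paper leaves implicit.
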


\begin{proof}
    These results follow from Lemma \ref{lem: Colorings given IEDS} with the IEDS consisting of a single vertex, respectively.
\end{proof}

Next we turn to the \emph{path on $m$ vertices}, $P_m$.

\begin{theorem}
    Let $k\in \Z$ and $n,m\in\Z^+$ with $k \not\equiv 0 \mod n$. Then 
    \[ \chi_{n,k}(P_2) = \chi_{n,k}(P_3) = 2 \]
    and 
    \[ \chi_{n,k}(P_m) = 3 \]
    for $m \geq 4$.
\end{theorem}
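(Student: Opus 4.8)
The plan is to handle the small cases $P_2$ and $P_3$ separately and then establish the two bounds $\chi_{n,k}(P_m) \le 3$ and $\chi_{n,k}(P_m) \ge 3$ for $m \ge 4$. Since $\chi(P_m) = 2$ for $m \ge 2$ and $\chi(G) \le \chi_{n,k}(G)$ whenever the latter exists, the lower bound $\chi_{n,k}(P_m) \ge 2$ is automatic; the real content of the lower bound is to rule out $\chi_{n,k}(P_m) = 2$ for $m \ge 4$, i.e.\ to show no proper closed coloring with remainder $k \bmod n$ uses only two colors when $k \not\equiv 0 \bmod n$ and $m \ge 4$. For the small cases, $P_2$ and $P_3$ admit an IEDS (a single endpoint vertex dominates $P_2$; the center vertex is an IEDS for $P_3$) and that IEDS is exactly the color class of a proper $2$-coloring in which the complementary class has the other color, so Lemma~\ref{lem: Colorings given IEDS} gives $\chi_{n,k}(P_2) = \chi_{n,k}(P_3) = 2$. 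Alternatively one writes down the colorings explicitly.

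For the upper bound $\chi_{n,k}(P_m) \le 3$ when $m \ge 4$, I would construct an explicit closed coloring using three colors (three distinct integer labels) and invoke Theorem~\ref{thm: remainder k mod n existence criteria and upper bound by n chi} only if needed; more directly, I would exhibit a labeling $\ell$ of the path $v_1 v_2 \cdots v_m$ by integers, using at most three distinct values, such that every closed-neighborhood sum is $\equiv k \bmod n$ and adjacent vertices get different labels. The natural attempt is to fix the interior to a periodic or near-periodic pattern: note that the closed-neighborhood condition at an interior vertex $v_i$ (for $2 \le i \le m-1$) reads $\ell(v_{i-1}) + \ell(v_i) + \ell(v_{i+1}) \equiv k$, and subtracting consecutive such relations gives $\ell(v_{i+1}) \equiv \ell(v_{i-2})$, so modulo $n$ the labels are determined by a period-$3$ pattern $(a,b,c)$ on the interior. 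One then chooses residues $a,b,c$ with $a+b+c \equiv k$, handles the two endpoint equations $\ell(v_1)+\ell(v_2) \equiv k$ and $\ell(v_{m-1})+\ell(v_m) \equiv k$ by adjusting $\ell(v_1)$ and $\ell(v_m)$ (which are unconstrained except by these single equations and by properness with their unique neighbor), and finally lifts the residues to genuine integers, breaking any accidental equalities between adjacent vertices by adding multiples of $n$ so that at most three distinct integer values are used in total. Some care is needed that the endpoint values do not force a fourth color; one picks them among the same three residues (possible since for any target residue there are at least two of the three residues $a,b,c$ available to avoid the forbidden neighbor value, as $n$ may be small but we have freedom in choosing $a,b,c$ — e.g.\ take $a = b$ impossible by properness, so one must argue a valid choice of the pattern exists, which is a short finite check).

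For the lower bound, suppose toward a contradiction that $\ell$ is a proper closed coloring with remainder $k \bmod n$ of $P_m$, $m \ge 4$, using only two integer values $\alpha \ne \beta$. Properness on a path forces $\ell$ to alternate: $\ell(v_i) = \alpha$ for $i$ of one parity and $\beta$ for the other. Then the closed-neighborhood sum at an interior degree-$2$ vertex is either $2\alpha + \beta$ or $\alpha + 2\beta$ depending on parity, while at an endpoint (degree $1$) it is $\alpha + \beta$. Since $m \ge 4$ there is at least one interior vertex of each parity as well as both endpoints, so we get $2\alpha + \beta \equiv \alpha + 2\beta \equiv \alpha + \beta \equiv k \bmod n$; the first congruence with the third gives $\alpha \equiv 0$ and $\beta \equiv 0 \bmod n$, whence $k \equiv \alpha + \beta \equiv 0 \bmod n$, contradicting $k \not\equiv 0 \bmod n$. (One should double-check which interior/endpoint parities actually occur for each $m \ge 4$; for $m = 4$ the vertices $v_2, v_3$ are the two interior ones and have opposite parity, and $v_1, v_4$ are endpoints, so all three sum-types appear, and for larger $m$ this only gets easier.) The main obstacle I anticipate is purely in the upper-bound construction: verifying that one can always choose the period-$3$ residue pattern and the two endpoint values so that the resulting integer labeling is genuinely proper while using no more than three distinct integers — in particular handling small $n$ (such as $n = 2$) where the residues alone are too few and properness must be rescued by adding multiples of $n$ without introducing a fourth value. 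I expect this to reduce to a small explicit case analysis on $m \bmod 3$ and $n$.
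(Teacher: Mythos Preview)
Your lower-bound argument is correct and essentially the paper's argument written out directly rather than via Lemma~\ref{lem: venn}; the two are equivalent and equally short.

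The gap is in the upper bound. You correctly diagnose that the interior must be $3$-periodic modulo $n$ and that the only issue is realizing a pattern with exactly three distinct \emph{integers} that is proper and handles the two endpoints, but you then leave this as an anticipated case analysis with acknowledged worries about small $n$ and a possible fourth color. Those worries dissolve once you make the right concrete choice, which is what the paper does: take $k\in\{1,\dots,n-1\}$ (harmless, since $\chi_{n,k}$ depends only on $k\bmod n$) and use the three integers $0$, $k$, $n$. These are pairwise distinct for every $n\ge 2$, any three consecutive labels sum to $k+n\equiv k$, and properness is automatic. The only remaining point is the two endpoint equations $\ell(v_1)+\ell(v_2)\equiv k$ and $\ell(v_{m-1})+\ell(v_m)\equiv k$, which force a single branch on $m\bmod 3$: for $m\not\equiv 1\pmod 3$ use $(0,k,n,0,k,n,\dots)$, and for $m\equiv 1\pmod 3$ use $(k,0,n,k,0,n,\dots,0,n,k)$. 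No further case analysis on $n$ is needed, and no fourth value ever arises. Your sketch is on the right track, but without this explicit choice the construction is not actually carried out.
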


\begin{proof}
    The first set of equalities is straightforward using proper closed colorings of $(0,k)$ and $(0,k,0)$, respectively.
    
    For $m\geq 4$, we first show that $\chi_{n, k}(P_m) > 2$. 
    If not, there is a proper closed $2$-coloring with remainder $k\mod n$ of the form $(a,b,a,b,\ldots)$.    However, Lemma \ref{lem: venn}, applied to the first two vertices, forces $a \equiv 0 \mod n$ and,
    applied to the second and third vertices, forces $b \equiv a \mod n$. As this requires $k\equiv 0 \mod n$, we obtain a contradiction. 

    It remains to exhibit a proper closed $3$-coloring of $P_m$ with remainder $k \mod n$.
    If $m \equiv 1 \mod 3$, then one such coloring is provided by $(k,0,n,k,0,n,\ldots, 0,n,k)$.
    If $m \not\equiv 1 \mod 3$, then $(0,k,n,0,k,n,\ldots)$ works.
\end{proof}




Next, we turn to the \emph{complete bipartite graph}, $K_{i,j}$, with parts of sizes $i$ and $j$.

\begin{theorem} Let $k\in \Z$ and $i,j,n \in \Z^+$. Then
$\chi_{n,k}(K_{i,j})$ exists if and only if 
\[ (ij-1,n) \mid  (j-1)k. \]
In that case, 
    \[
        \chi_{n,k}(K_{i,j}) = 2.
    \]
    Note that the condition $(ij-1,n)\mid  (j-1)k$ is equivalent to $(ij-1,n) \mid  (i-1)k$.
\end{theorem}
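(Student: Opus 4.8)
The plan is to exploit the rigidity of closed neighborhoods in $K_{i,j}$. Write the two parts as $A$ and $B$ with $|A|=i$ and $|B|=j$, and let $\ell$ be any closed coloring with remainder $k\mod n$. Since $N[a]\setminus\{a\}=B$ for every $a\in A$, Lemma~\ref{lem: venn} applied to a pair $a,a'\in A$ forces $\ell(a)\equiv\ell(a')\mod n$; hence $\ell$ is constant mod $n$ on $A$, say with value $\alpha$, and likewise constant mod $n$ on $B$ with value $\beta$. Writing the part-sums as $S_A=\sum_{a\in A}\ell(a)\equiv i\alpha$ and $S_B=\sum_{b\in B}\ell(b)\equiv j\beta \mod n$, the defining congruence at a vertex of $A$ and at a vertex of $B$ reads $\alpha+j\beta\equiv k$ and $\beta+i\alpha\equiv k \mod n$. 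Eliminating $\beta$ yields $(ij-1)\alpha\equiv(j-1)k\mod n$, and eliminating $\alpha$ yields $(ij-1)\beta\equiv(i-1)k\mod n$.

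First I would settle existence. The computation above shows a closed coloring can exist only if the linear congruence $(ij-1)\alpha\equiv(j-1)k\mod n$ is solvable, i.e., only if $(ij-1,n)\mid(j-1)k$. Conversely, given such a solution $\alpha$, set $\beta:=k-i\alpha$ reduced mod $n$; the congruence guarantees $\alpha+j\beta\equiv k\mod n$, so the labeling that is constantly $\alpha$ on $A$ and constantly $\beta$ on $B$ is a closed coloring with remainder $k\mod n$. Since $\chi(K_{i,j})$ is finite, Theorem~\ref{thm: remainder k mod n existence criteria and upper bound by n chi} turns this into the equivalence between existence of a closed coloring and existence of a proper one, which proves the stated ``if and only if.''

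Next, the value. As $K_{i,j}$ has an edge, $\chi_{n,k}(K_{i,j})\geq\chi(K_{i,j})=2$. For the reverse bound I would take the coloring just constructed with representatives $\alpha,\beta\in\{0,\dots,n-1\}$: if $\alpha\neq\beta$ it is already proper of order $2$, and if $\alpha=\beta$ then replacing the label on every vertex of $A$ by $\alpha+n$ shifts each label in $A$, and each part-sum, by a multiple of $n$, so it remains a closed coloring with remainder $k\mod n$ and is now proper of order $2$. Hence $\chi_{n,k}(K_{i,j})=2$ whenever it exists.

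Finally, the two divisibility conditions must agree: since $K_{i,j}=K_{j,i}$, the entire argument is symmetric under $i\leftrightarrow j$ while $d:=(ij-1,n)$ is unchanged, so the existence criterion is equally $d\mid(i-1)k$, whence $d\mid(j-1)k\iff d\mid(i-1)k$. (One can also see this directly: $d$ is coprime to $i$ because $d\mid ij-1$, and $ij\equiv 1\mod d$ gives $j\equiv i^{-1}\mod d$, so $(j-1)k\equiv-i^{-1}(i-1)k\mod d$ with $i^{-1}$ a unit.) The only steps that need genuine care are the observation that $\ell$ is forced to be constant mod $n$ on each part and the properness repair when $\alpha=\beta$; the rest is routine linear-congruence bookkeeping, so I do not expect a serious obstacle.
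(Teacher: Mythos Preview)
Your proof is correct and follows essentially the same approach as the paper: reduce to constant labels $\alpha,\beta$ on the two parts via Lemma~\ref{lem: venn}, derive the single linear congruence $(ij-1)\alpha\equiv(j-1)k\bmod n$, and repair properness when $\alpha=\beta$ by shifting one part by $n$. The only notable difference is your argument for the equivalence $(ij-1,n)\mid(j-1)k\iff(ij-1,n)\mid(i-1)k$: you use the symmetry $K_{i,j}=K_{j,i}$ (and a clean unit argument via $ij\equiv 1\bmod d$), whereas the paper uses the algebraic identity $(ij-1)k=(i-1)(j-1)k+(i-1)k+(j-1)k$; both are immediate, and your detour through Theorem~\ref{thm: remainder k mod n existence criteria and upper bound by n chi} is harmless but unnecessary since you construct the proper $2$-coloring directly.
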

\begin{proof}
    \indent Let $V_1$ and $V_2$ with $|V_1|=i$ and $|V_2|=j$ denote the vertex sets belonging to the two parts of $K_{i,j}$. 
    If a closed coloring with remainder $k\mod n$ of $G$ exists, 
    Lemma \ref{lem: venn}, applied to any two vertices in the same part shows that the labels are congruent $\mod n$. Therefore, $\chi_{n,k}(K_{i,j})$ exists if and only if it is $2$. 
    
    Write $\alpha$ and $\beta$ for the shared label of the vertices in $V_1$ and $V_2$, respectively. 
    There exists a closed coloring with remainder $k\mod n$ of $G$ if and only if there exist solutions for $\alpha, \beta$ to the equations 
    \[i\alpha + \beta \equiv \alpha + j\beta \equiv k \mod n.\]
    In turn, this is equivalent to setting $\beta \equiv (k - i\alpha) \mod n$ and requiring a solution to the equation
    \[ (ij-1)\alpha \equiv (j-1)k \mod n. \]
    As a result, $\chi_{n,k}(K_{i,j})$ exists if and only if 
    $(ij-1,n) \mid  (j-1)k$. As $(ij-1)k = (i-1)(j-1)k+(i-1)k+(j-1)k$, we see that this condition is equivalent to $(ij-1,n) \mid  (i-1)k$.   
    %
    %
\end{proof}


We turn now to \emph{regular graphs}.

\begin{theorem}\label{thm: regular chi k n}
    Let $k\in \Z$ and $n,j\in\Z^+$, and let $G$ be a $j$-regular graph. Then
    \[ (j+1, n) \mid k \implies \chi_{n,k}(G) = \chi(G) \]
    and, if $G$ is finite, 
    \[ (j+1, n) \nmid k|V| 
        \implies \chi_{n,k}(G) \text{ does not exist.} \]
\end{theorem}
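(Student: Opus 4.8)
The plan is to handle the two implications separately, both resting on the single observation that in a $j$-regular graph every closed neighborhood $N[v]$ has exactly $j+1$ vertices and, dually (by a double-counting argument), every vertex lies in exactly $j+1$ closed neighborhoods. Note also that $\chi(G)\le j+1<\infty$ for any $j$-regular $G$, so the finiteness hypotheses of the results quoted below are automatically met.

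For the first implication I would assume $(j+1,n)\mid k$, so that the congruence $(j+1)c\equiv k \mod n$ admits a solution $c\in\Z$. The constant labeling $v\mapsto c$ then has every closed neighborhood sum equal to $(j+1)c\equiv k \mod n$, i.e.\ it is a constant closed coloring of $G$ with remainder $k \mod n$. The fifth item of Theorem~\ref{thm: k mod n units and divisor first relations} (with that constant $c=k$) gives $\chi_{n,0}(G)=\chi_{n,k}(G)$, and since $\chi_{n,0}(G)=\chi(G)$ was established at the start of this section, we conclude $\chi_{n,k}(G)=\chi(G)$. Equivalently, one can exhibit the coloring directly: scale a minimal proper labeling $\ell'$ of $G$ by $n$ and add $c$; the labeling $v\mapsto n\ell'(v)+c$ is proper, uses $\chi(G)$ colors, and has every closed sum congruent to $(j+1)c\equiv k \mod n$, while the reverse inequality $\chi(G)\le\chi_{n,k}(G)$ is automatic.

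For the second implication, suppose $G$ is finite and some closed coloring $\ell$ of $G$ with remainder $k \mod n$ exists; the goal is to force $(j+1,n)\mid k\,|V|$. Summing the defining congruence $\sum_{w\in N[v]}\ell(w)\equiv k \mod n$ over all $v\in V$ gives $k\,|V|$ on the right, while on the left each $w\in V$ is counted exactly $j+1$ times (once as $w$ itself and once for each of its $j$ neighbors), so the left side equals $(j+1)\sum_{w\in V}\ell(w)$. Hence $(j+1)\sum_{w\in V}\ell(w)\equiv k\,|V| \mod n$, which is solvable in the unknown $\sum_{w\in V}\ell(w)$ only if $(j+1,n)\mid k\,|V|$. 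Taking the contrapositive, $(j+1,n)\nmid k\,|V|$ rules out any closed coloring with remainder $k \mod n$ of $G$; since $\chi(G)<\infty$, Theorem~\ref{thm: remainder k mod n existence criteria and upper bound by n chi} upgrades this to the non-existence of a \emph{proper} such coloring, i.e.\ $\chi_{n,k}(G)$ does not exist.

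The argument is essentially routine; the only steps that need a moment of care are the double-counting identity in the second part (this is where $j$-regularity is genuinely used) and, again in the second part, remembering that "$\chi_{n,k}(G)$ does not exist" must exclude \emph{all} closed colorings rather than just the proper ones, which is exactly the role of Theorem~\ref{thm: remainder k mod n existence criteria and upper bound by n chi}.
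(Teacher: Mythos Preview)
Your proof is correct and follows essentially the same approach as the paper: a constant labeling by a solution of $(j+1)c\equiv k\pmod n$ for the first implication, and the double-counting identity $\sum_{v}\sum_{u\in N[v]}\ell(u)=(j+1)\sum_{w}\ell(w)$ for the second. One small remark: your final appeal to Theorem~\ref{thm: remainder k mod n existence criteria and upper bound by n chi} is superfluous and points in the wrong direction---once you have shown that \emph{no} closed coloring with remainder $k\bmod n$ exists, the non-existence of a \emph{proper} one is immediate, since proper closed colorings are in particular closed colorings.
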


\begin{proof}
    If $(j+1, n) \mid k$, then $(j+1)x \equiv k \mod n$ can be solved. In that case, a constant labeling of $G$ by $x$ is a closed coloring with remainder $k \mod n$. Furthermore, note that $\chi(G)\le j+1$ for any $j$-regular graph~$G$. Theorem \ref{thm: remainder k mod n existence criteria and upper bound by n chi} finishes the proof.

    Now suppose there is a closed coloring $\ell$ of $G$ with remainder $k \mod n$, but $(j+1, n) \nmid k|V|$. 
    Let 
    \[ S = \sum_{v\in V} \sum_{u\in N[v]} \ell(u). \]
    Then, $S \equiv k|V| \mod n$ as $\sum_{u\in N[v]} \ell(u) \equiv k \mod n$ for all $v\in V$. 
    But each $v\in V$ is in exactly $j+1$ closed neighborhoods. Therefore, 
    $S = (j+1)\sum_{v\in V} \ell(v)$. As a result, the equation $(j+1)x \equiv k|V| \mod n$ can be solved. As this happens if and only if $(j+1,n)\mid k|V|$, we are done. 
\end{proof}

We turn now to the \emph{cycle on $m$ vertices}, $C_m$. Recall that $\chi(C_m)$ is $2$ when $m$ is even and $3$ when $m$ is odd.

\begin{theorem}\label{thm: k mod n calc for C_m}
    Let $k\in \Z$ and $n,m\in\Z^+$ with $m\geq 3$. Then
    \[
        \chi_{n,k}(C_m) =
        \begin{cases}
            2 & \text{if } 
                (3,n) \mid k \text{ and } 2 \mid m,\\
            3 & \text{if }  
               (3,n)\mid k \text{ and } 2 \nmid m \text { or}\\
             & \text{if }  
               (3,n) \nmid k \text{ and } 3 \mid m, \\
            \text{does not exist} & \text{if } 
                (3,n) \nmid k \text{ and } 3 \nmid m. 
        \end{cases}
    \]
\end{theorem}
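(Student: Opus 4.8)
The plan is to split the analysis of $\chi_{n,k}(C_m)$ into the two cases determined by whether $(3,n)\mid k$, and within each case to construct explicit closed colorings matching the claimed values while also proving the matching lower bounds. Throughout, the key structural fact is that for $C_m$ with $m\ge 4$ the closed neighborhoods $N[v]$ and $N[w]$ of two vertices at distance $2$ satisfy $N[v]\setminus N[w]=\{v'\}$ and $N[w]\setminus N[v]=\{w'\}$ for suitable single vertices, so Lemma~\ref{lem:~venn} forces consecutive ``same-parity-position'' labels to be congruent $\bmod\, n$; and for any closed coloring the sum $S=\sum_{v}\sum_{u\in N[v]}\ell(u)=3\sum_v\ell(v)$ counts each vertex in exactly $3$ closed neighborhoods (here $C_m$ is $2$-regular, so this is the $j=2$ instance of the argument in Theorem~\ref{thm:~regular chi k n}), giving $3\sum_v\ell(v)\equiv km\bmod n$ as a necessary condition.

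\textbf{Case $(3,n)\mid k$.} Here $3x\equiv k\bmod n$ is solvable, so the constant labeling by such an $x$ is a closed coloring with remainder $k\bmod n$; since $\chi(C_m)=2$ for even $m$ and $3$ for odd $m$, Theorem~\ref{thm:~remainder k mod n existence criteria and upper bound by n chi} combined with $\chi(G)\le\chi_{n,k}(G)$ shows $\chi_{n,k}(C_m)=2$ when $m$ is even, once I exhibit a proper closed $2$-coloring. For even $m$ I would look for a coloring of the form $(a,b,a,b,\dots)$; Lemma~\ref{lem:~venn} applied to adjacent vertices forces $a\equiv 0$ and then $b\equiv a\equiv 0\bmod n$, so in fact the only proper closed $2$-colorings are nonconstant labelings with both values in $n\Z$, which work precisely when $k\equiv 0\bmod n$ — wait, so for $k\not\equiv 0$ I instead need to check that a proper $3$-coloring exists and a $2$-coloring does not. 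I would therefore argue: when $(3,n)\mid k$ but $k\not\equiv 0\bmod n$, the same Lemma~\ref{lem:~venn} computation rules out a proper $2$-coloring, and a proper closed $3$-coloring is built by a periodic pattern such as $(x,x,x,\dots)$ perturbed on a few vertices to break adjacency while preserving all closed sums, or more simply by taking the constant-$x$ coloring and adding a proper $3$-coloring of $C_m$ valued in $n\Z$ as in the proof of Theorem~\ref{thm:~remainder k mod n existence criteria and upper bound by n chi}; this gives $\chi_{n,k}(C_m)=3$ for odd $m$ and also for even $m$ with $k\not\equiv0$, consistent with the stated table where the $2$-value is claimed only under ``$(3,n)\mid k$ and $2\mid m$'' — so I need to double-check the table intends $k\equiv 0$ implicitly folded into $(3,n)\mid k$; in any event I will handle the $k\equiv 0$ sub-case via the first theorem of Section~\ref{sec:~basic results} and the genuinely-nonzero sub-case as above.

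\textbf{Case $(3,n)\nmid k$.} Now the constant labeling is unavailable. The divisibility obstruction $3\sum_v\ell(v)\equiv km\bmod n$ requires $(3,n)\mid km$; since $(3,n)\nmid k$ and $(3,n)\in\{1,3\}$, we have $(3,n)=3$, and $3\mid km$ with $3\nmid k$ forces $3\mid m$. Hence if $3\nmid m$ no closed coloring exists at all — this gives the ``does not exist'' line. When $3\mid m$, I construct a proper closed $3$-coloring by a period-$3$ pattern: try $\ell$ repeating $(a,b,c)$ around the cycle, so every closed sum equals $a+b+c$, and I need $a+b+c\equiv k\bmod n$ together with $a\ne b$, $b\ne c$, $c\ne a$ (and the wrap-around is automatic since $3\mid m$); choosing, e.g., $a=k$, $b=n$, $c=n$ fails properness, so instead $a=k$, $b=0$, $c=-k$ if $k\not\equiv 0,-k$, adjusting by multiples of $n$ to guarantee three distinct integers — this is always possible, giving order exactly $3$; and order $2$ is impossible by the Lemma~\ref{lem:~venn} argument above (which would again force $k\equiv 0\bmod n$, hence $(3,n)\mid k$, contradiction), so $\chi_{n,k}(C_m)=3$.

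The main obstacle I anticipate is the bookkeeping for small $m$ (namely $m=3$, where ``distance $2$'' wraps around and the $N[v]$ all coincide, so the $C_3$ case must be treated separately and in fact $\chi_{n,k}(C_3)=\chi_{n,k}(K_3)=3$ whenever it exists) and, more subtly, pinning down exactly which residues $k$ admit a proper period-$3$ coloring with three \emph{distinct} integer labels: one must be careful that the representatives $k,0,-k$ (or whatever triple is chosen) can always be shifted by multiples of $n$ to be pairwise distinct in $\Z$, even when $n$ is small like $n=1$ or $n=2$ — for $n=1$ every $k$ satisfies $(3,1)=1\mid k$ so we are in the first case, and for $n=2$ one has $(3,2)=1\mid k$ as well, so the genuinely delicate sub-case $(3,n)\nmid k$ only occurs when $3\mid n$, which gives enough room. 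Verifying these edge cases cleanly, rather than the main construction, is where the care is needed.
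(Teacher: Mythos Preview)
Your Case~2 argument (the $(3,n)\nmid k$ side) is essentially the paper's: the double-counting obstruction $3\sum\ell(v)\equiv km$ gives nonexistence when $3\nmid m$, and a period-$3$ pattern gives the upper bound when $3\mid m$ (the paper uses $(0,k,n,0,k,n,\dots)$, which avoids your distinctness worries since $(3,n)\nmid k$ forces $k\not\equiv 0\bmod n$).

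There is, however, a genuine error in Case~1. For an even cycle the proper $2$-coloring $(a,b,a,b,\dots)$ has closed sums $a+2b$ and $2a+b$; equating these $\bmod\, n$ gives $a\equiv b\bmod n$, \emph{not} $a\equiv 0$. (Your ``$a\equiv 0$'' is the path computation, where $N[v_1]\setminus N[v_2]=\emptyset$ because $v_1$ is a leaf; on a cycle the symmetric difference is $\{v_m\}$ versus $\{v_3\}$, so Lemma~\ref{lem: venn} only yields $\ell(v_m)\equiv\ell(v_3)$.) From $a\equiv b$ one gets $3a\equiv k\bmod n$, which is solvable precisely when $(3,n)\mid k$. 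Crucially, ``proper'' means $a\neq b$ as \emph{integers}, not $a\not\equiv b\bmod n$: taking $a=x$, $b=x+n$ with $3x\equiv k\bmod n$ produces a perfectly valid proper closed $2$-coloring. So the table is correct as stated --- the $2$-value really does hold for all $k$ with $(3,n)\mid k$ and $2\mid m$, not just $k\equiv 0$ --- and your proposed conclusion $\chi_{n,k}(C_m)=3$ for even $m$ with $k\not\equiv 0$ contradicts the theorem. Once this is fixed, the paper's route via Theorem~\ref{thm: regular chi k n} (constant coloring $+$ Theorem~\ref{thm: remainder k mod n existence criteria and upper bound by n chi} with $|\ell|=1$) gives $\chi_{n,k}(C_m)=\chi(C_m)$ immediately in this case.
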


\begin{proof}
    Theorem \ref{thm: regular chi k n} shows that $\chi_{n,k}(C_m) =  \chi(C_m)$ when $(3,n) \mid k$ and that $\chi_{n,k}(C_m)$ does not exist when $(3,n) \nmid km$, which is equivalent to $(3,n) \nmid k$ and $3\nmid m$. 
    If we are outside of either of these two cases, then $(3,n) \nmid k$ and $3\mid m$. In that case, since $3\mid m$, $\chi_{n,k}(C_m) \leq 3$ as demonstrated by the closed coloring $(0,k,n,0,k,n,\ldots)$. 
    
    However, as $\chi(G)\leq \chi_{n,k}(G)$ for all graphs, $\chi_{n,k}(C_m)$ can possibly be $2$ only when $m$ is also even. In this case, in the standard manner, denote the vertices of $C_m$ by $v_i$ for $i\in\Z_m$. Suppose $\ell$ is a proper closed $2$-coloring with remainder $k\mod n$.  Lemma \ref{lem: venn}, applied to adjacent vertices, shows that $\ell(v_i)\equiv\ell(v_{i+3}) \mod n$. The proper $2$-coloring forces $\ell(v_i)\equiv\ell(v_{i+2})\mod n$. As a result, the closed coloring is constant $\mod n$. This means that $3x \equiv k \mod n$ has a solution. In turn, this means that $(3,n)\mid k$, which is not possible in this case. 
\end{proof}

\section{Infinite Order Examples}\label{sec: infinite order exs}

Next, we turn to the \emph{complete $m$-ary rooted tree of infinite height},~$T_m$.

\begin{theorem}\label{thm: trees}
    Let $k\in \Z$ and $n,m\in\Z^+$. Then
    \[
        \chi_{n,k}(T_m)=
        \begin{cases}
            2 & \text{if } n \mid mk, \\
            3 & \text{else.}
        \end{cases}
    \]
\end{theorem}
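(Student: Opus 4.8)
The plan is to first sandwich $\chi_{n,k}(T_m)$ between $2$ and $3$ — which in particular shows it always exists — and then to decide between the two values according to whether $n \mid mk$. For the upper bound I would exhibit an IEDS $U$ of $T_m$ (Definition~\ref{def: independent and dominating sets}) and invoke Lemma~\ref{lem: Colorings given IEDS} together with $\chi(T_m)=2$, which yields $2 \le \chi_{n,k}(T_m) \le 3$ for all $k\in\Z$, $n\in\Z^+$. To construct $U$, label every vertex by one of three states $A, B, C$ recursively: the root gets state $A$; a state-$A$ vertex has all $m$ children of state $B$; a state-$B$ vertex has all $m$ children of state $C$; and a state-$C$ vertex has one distinguished child of state $A$ and its other $m-1$ children of state $C$. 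Let $U$ be the set of state-$A$ vertices. Since $T_m$ has infinite height every vertex has $m$ children, so one can check, sorting on the state of a vertex $v$ (and using that state $A$ occurs only as the distinguished child of a state-$C$ vertex, state $B$ only as a child of a state-$A$ vertex, and state $C$ only as a child of a state-$B$ or state-$C$ vertex), that $|N[v]\cap U| = 1$ for every $v$; hence $U$ is an IEDS. (When $m=1$ this $U$ is just the set of vertices at levels divisible by $3$.)

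If $n \mid mk$, I would finish by exhibiting a proper closed $2$-coloring with remainder $k \mod n$: colour every vertex at an even level by some integer $a \equiv k \mod n$ and every vertex at an odd level by some integer $b \equiv 0 \mod n$, chosen with $a \ne b$. The closed-neighbourhood sum is $a + mb \equiv k$ at the root, $a + (m+1)b \equiv a \equiv k$ at an even level $\ge 2$, and $b + (m+1)a \equiv (m+1)k \equiv k + mk \equiv k$ at an odd level, the last step using $n \mid mk$. Together with $\chi(T_m)=2 \le \chi_{n,k}(T_m)$ this gives $\chi_{n,k}(T_m)=2$.

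If $n \nmid mk$, it remains to rule out a proper closed $2$-coloring. Any such coloring $\ell$ uses only two values, hence is the bipartition colouring of the connected bipartite graph $T_m$, i.e.\ constant with value $a$ on even levels and $b$ on odd levels. Equating the closed-neighbourhood sums at the root and at a level-$2$ vertex forces $-b \equiv 0 \mod n$, so the root equation gives $a \equiv k \mod n$; then the closed-neighbourhood sum at a level-$1$ vertex equals $b + (m+1)a \equiv (m+1)k \equiv k+mk$, which must be $\equiv k \mod n$, forcing $n \mid mk$ — a contradiction. Hence $\chi_{n,k}(T_m) > 2$, and with the bound from the first paragraph, $\chi_{n,k}(T_m) = 3$. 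The only genuinely non-routine point is the IEDS construction together with the verification that the closed neighbourhoods of $U$ partition $V(T_m)$; everything else is elementary congruence bookkeeping. One could instead bypass Lemma~\ref{lem: Colorings given IEDS} and write the $3$-coloring out by hand — value $k$ on $U$ and two distinct multiples of $n$ used properly on $V\setminus U$ — but this still rests on having the same IEDS.
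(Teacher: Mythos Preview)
Your proof is correct and follows essentially the same approach as the paper: bound $\chi_{n,k}(T_m)$ between $2$ and $3$ via an IEDS and Lemma~\ref{lem: Colorings given IEDS}, then decide which value holds by analyzing a level-constant $2$-coloring. The only cosmetic difference is the IEDS itself: you put the root in $U$ and generate states via an $A\to B\to C$ automaton, whereas the paper leaves the root out of $U$ and builds $U$ by a local parent-rule; both give valid IEDSes. Your treatment is slightly more careful in explicitly choosing $a\ne b$ in the $2$-coloring, which the paper glosses over.
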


\begin{proof}
    For a vertex $v$ of $T_m$, write $h(v)$ for the \emph{height} of $v$, i.e., the distance from a vertex $v$ to the root, $v_0$. 
    If $n\mid mk$, then the labeling $\ell$ on the vertices of $T_m$ given by
    \[
        \ell(v) =
        \begin{cases}
            0 & \text{if } h(v) \text{ is odd}, \\
            k & \text{otherwise,}
        \end{cases}
    \]
    gives a proper closed coloring with remainder $k\mod n$. Thus $\chi_{n,k}(T_m) = 2$ in this case.

    In fact, if $\chi_{n,k}(T_m) = 2$, induction shows that any proper closed $2$-coloring of $T_m$ must be constant on vertices of the same height. Therefore, there exist $\alpha, \beta\in\Z$ so that
    \begin{align*}
        \alpha + m\beta &\equiv k \mod n,\\
        \beta + (m+1)\alpha &\equiv k \mod n,\\
        \alpha + (m+1)\beta &\equiv k \mod n.
    \end{align*}
    In turn, the first and third displayed equations force $\beta\equiv 0\mod n$. The first then yields $\alpha\equiv k \mod n$, and the second gives $mk\equiv 0 \mod n$.

    We finish the proof by showing $\chi_{n,k}(T_m) \le 3$ with the help of Lemma~\ref{lem: Colorings given IEDS}. 
    Define the IEDS $U$ inductively via the height of a vertex $v$: let $v_0\not\in U$. Then, starting with $v:=v_0$,
    if neither $v$ nor its parent (if it exists) lies in $U$, have $U$ contain exactly one child of $v$. Otherwise, have $U$ contain no children of $v$. It is straightforward to check that this is an IEDS.
\end{proof}

\begin{remark}
Note that the above proof also applies to show that $\chi_{n,k}(T) \leq 3$ for any (infinite) tree $T$ without any leaves. This result differs significantly from our later findings on finite trees, see Theorems~\ref{thm: baby tree example} and \ref{thm: perfect binary trees}.
\end{remark}

Next we look at the \emph{regular, infinite tilings of the plane}. Write $R_3$, $R_4$, and $R_6$ for the tilings by regular triangles, squares, and hexagons, respectively.
\begin{theorem}\label{thm: regular infinite tilins of plane x n k}
    For the regular, infinite tilings of the plane,
    \[
    \chi_{n,k} (R_3) = \begin{cases}
        3 & \text{if } (7,n)\mid k,\\
        4 & \text{else,}
    \end{cases}
    \]
    \[
    \chi_{n,k} (R_4) = \begin{cases}
        2 & \text{if } (5,n)\mid k, \\
        3 & \text{else,}
    \end{cases}
    \]
    and
    \[
    \chi_{n,k} (R_6) = \begin{cases}
        2 & \text{if } (8,n)\mid 2k,\\
        3 & \text{else.}
    \end{cases}
    \]
\end{theorem}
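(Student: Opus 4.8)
The plan is to handle the three lattices in parallel, using that $R_3$ is $6$-regular with $\chi(R_3)=3$, $R_4$ is $4$-regular with $\chi(R_4)=2$, and $R_6$ is $3$-regular with $\chi(R_6)=2$. Two structural facts will feed everything. First, each of $R_3,R_4,R_6$ admits an IEDS: identifying $R_4$ with $\Z^2$ under the usual grid adjacency one takes the perfect code $\{(x,y):x+2y\equiv 0 \bmod 5\}$; identifying $R_3$ with $\Z^2$ where the six neighbours of $(i,j)$ are $(i\pm1,j)$, $(i,j\pm1)$, $(i+1,j+1)$, $(i-1,j-1)$, one takes $\{(x,y):x+2y\equiv 0\bmod 7\}$; and for $R_6$, in the brick-wall model (vertex set $\Z^2\times\{0,1\}$, with $(i,j,0)$ adjacent to $(i,j,1)$, $(i-1,j,1)$, $(i,j-1,1)$) one takes the union of all $(i,j,0)$ with $i,j$ both odd together with all $(i,j,1)$ with $i,j$ both even. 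In each case the closed neighbourhood of a representative vertex is a transversal of the relevant quotient, so a short check confirms the set is an IEDS. By Lemma~\ref{lem: Colorings given IEDS} this yields, for all $k$ and $n$, existence of $\chi_{n,k}$ together with $\chi_{n,k}(R_3)\le 4$ and $\chi_{n,k}(R_4),\chi_{n,k}(R_6)\le 3$. Second, each graph is connected, so its minimal proper colouring is essentially unique: $R_4$ and $R_6$ are bipartite, so a proper $2$-colouring is determined up to swapping the two classes; and $R_3$ is uniquely $3$-colourable, because every face is a triangle, so in a proper $3$-colouring the colours on one triangle force those on every edge-adjacent triangle, and the triangle-adjacency graph of $R_3$ is connected.

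For $R_4$: write $\alpha,\beta$ for the labels on the two classes of the (essentially unique) proper $2$-colouring. Since every vertex has $4$ neighbours, all in the other class, a proper closed $2$-colouring with remainder $k\bmod n$ exists precisely when $\alpha+4\beta\equiv k$ and $4\alpha+\beta\equiv k \pmod n$ admit a solution with $\alpha\ne\beta$ as integers — and the latter can always be arranged by replacing $\alpha$ with $\alpha+n$. Eliminating $\alpha$ reduces solvability to that of $15\beta\equiv 3k\pmod n$, i.e.\ to $(15,n)\mid 3k$, which a prime-by-prime comparison shows is equivalent to $(5,n)\mid k$. If this holds then $2=\chi(R_4)\le\chi_{n,k}(R_4)\le 2$; if it fails there is no closed $2$-colouring, so with the bound $\chi_{n,k}(R_4)\le 3$ above we get $\chi_{n,k}(R_4)=3$. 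For $R_6$ the argument is the same with ``$4$ neighbours'' replaced by ``$3$ neighbours'': the system becomes $\alpha+3\beta\equiv k$, $3\alpha+\beta\equiv k$, elimination reduces solvability to that of $8\beta\equiv 2k\pmod n$, i.e.\ to $(8,n)\mid 2k$, and the same dichotomy yields the stated value of $\chi_{n,k}(R_6)$.

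For $R_3$: if $(7,n)\mid k$, Theorem~\ref{thm: regular chi k n} (with $j+1=7$) already gives $\chi_{n,k}(R_3)=\chi(R_3)=3$. If $(7,n)\nmid k$, it suffices to show $\chi_{n,k}(R_3)\neq 3$, since then $3=\chi(R_3)\le\chi_{n,k}(R_3)\le 4$ forces the value $4$. A proper closed colouring of order $3$ is a proper $3$-colouring, hence by unique $3$-colourability its classes are those of the standard colouring $c(i,j)=(i+j)\bmod 3$; a direct count shows every vertex of one class has exactly three neighbours in each of the other two classes, so the closed-sum conditions read
\[ \alpha+3\beta+3\gamma \;\equiv\; 3\alpha+\beta+3\gamma \;\equiv\; 3\alpha+3\beta+\gamma \;\equiv\; k \pmod n. \]
Adding these three congruences gives $7(\alpha+\beta+\gamma)\equiv 3k\pmod n$, so $7s\equiv 3k$ is solvable; since $(7,3)=1$ this forces $(7,n)\mid k$, contradicting the hypothesis.

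The step I expect to be the main obstacle is the first structural fact: exhibiting explicit IEDSs (perfect codes) for all three tilings and verifying the efficiency and independence conditions — especially for $R_6$, where the code is not a single sublattice coset. A secondary point needing care is the unique $3$-colourability of $R_3$, on which the lower bound $\chi_{n,k}(R_3)\ge 4$ rests; once these are in place the remaining work is a reference to Lemma~\ref{lem: Colorings given IEDS} or Theorem~\ref{thm: regular chi k n} together with a short linear computation over $\Z_n$.
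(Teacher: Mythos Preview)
Your proof is correct and follows essentially the same approach as the paper: exhibit an IEDS in each tiling to obtain the upper bound $\chi_{n,k}\le \chi+1$ via Lemma~\ref{lem: Colorings given IEDS}, and then decide whether $\chi_{n,k}=\chi$ by analyzing the linear system over $\Z_n$ arising from the (essentially unique) minimal proper colouring. The only notable difference is that you make explicit the unique colourability of each lattice---bipartiteness for $R_4$ and $R_6$, and the triangle-propagation argument for unique $3$-colourability of $R_3$---whereas the paper tacitly assumes that any proper $\chi(G)$-colouring yields the displayed neighbourhood counts; your version is the more complete argument on this point.
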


\begin{proof}
    First of all, $R_3$, $R_4$, and $R_6$ all admit IEDS. See Figures \ref{fig: IEDS for triangular tiling}, \ref{fig: IEDS for square tiling}, and \ref{fig: IEDS for hex tiling}, respectively, where the IEDS is given by the diamond vertices. Lemma \ref{lem: Colorings given IEDS} therefore shows that $\chi_{n,k}(G)$ is bounded by $\chi(G)+1$ for each of these graphs. Recall that $\chi(R_3)=3$ and $\chi(R_4)=\chi(R_6)=2$.

    Begin with $R_3$. Theorem \ref{thm: regular chi k n} shows that $\chi_{n,k}(R_3)=3$ if $(7,n)\mid k$.  Conversely, if $\chi_{n,k}(R_3)=3$, there exist $\alpha, \beta, \gamma \in\Z$ so that
    \begin{align*}
       \alpha + 3\beta + 3\gamma &\equiv k \mod n,\\
       3\alpha + \beta + 3\gamma &\equiv k \mod n,\\
       3\alpha + 3\beta + \gamma &\equiv k \mod n. 
    \end{align*}
    Adding these equations shows that $7(\alpha+\beta+\gamma)\equiv 3k \mod n$. Therefore $(7,n)\mid 3k$. As this is equivalent to $(7,n)\mid k$, we are done.

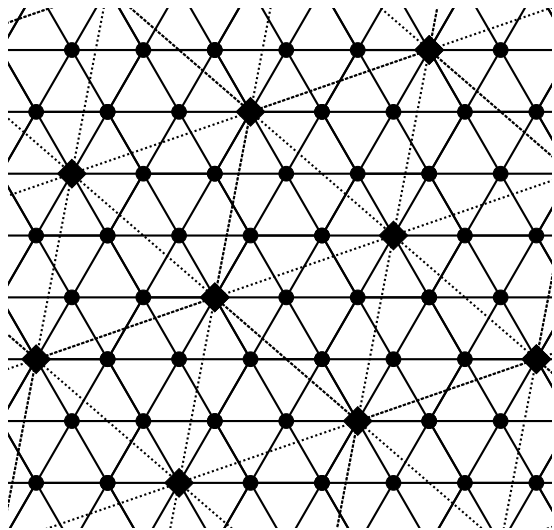
\begin{figure}[H]
\begin{center}
\begin{tikzpicture}[scale=0.95]

\begin{scope}[local bounding box=L]

\draw[-, color=white] (0.125,0.25) -- (0.125,7.5) -- (7.7,7.5) -- (7.7,0.25) -- (0.125,0.25);

\end{scope}
\clip (current bounding box.south west) rectangle (current bounding box.north east);

\begin{scope}

    \foreach \i in {-1,...,2} 
    \foreach \j in {-1,...,4} {
  
    \foreach \a in {0,120,-120} \draw[color=black, thick] (3*\i,2*sin{60}*\j) -- +(\a:1);
    \foreach \a in {0,120,-120} \draw[color=black, thick] (3*\i+3*cos{60},2*sin{60}*\j+sin{60}) -- +(\a:1);

    \foreach \a in {0,120,-120} \draw[color=black, thick] (1/2 + 3*\i,2*sin{60}*\j + 0.866) -- +(\a:1);
    \foreach \a in {0,120,-120} \draw[color=black, thick] (1/2 + 3*\i+3*cos{60},2*sin{60}*\j+sin{60} + 0.866) -- +(\a:1);

    \foreach \a in {0,120,-120} \draw[color=black, thick] (3*\i,2*sin{60}*\j) -- +(\a:-1);
    \foreach \a in {0,120,-120} \draw[color=black, thick] (3*\i+3*cos{60},2*sin{60}*\j+sin{60}) -- +(\a:-1);

    \foreach \a in {0,120,-120} \draw[color=black, thick] (1/2 + 3*\i,2*sin{60}*\j + 0.866) -- +(\a:-1);
    \foreach \a in {0,120,-120} \draw[color=black, thick] (1/2 + 3*\i+3*cos{60},2*sin{60}*\j+sin{60} + 0.866) -- +(\a:-1);
  
    }

    \foreach \i in {-1,...,2} 
    \foreach \j in {-1,...,4} {
  
    \node[shape=circle,scale=0.5,draw=black, fill=black] (H) at (3*\i,2*sin{60}*\j) {};
    \node[shape=circle,scale=0.5,draw=black, fill=black] (H) at (3*\i+3*cos{60},2*sin{60}*\j+sin{60}) {};
    \node[shape=circle,scale=0.5,draw=black, fill=black] (H) at (3*\i-cos{60},2*sin{60}*\j+sin{60}) {};  
    \node[shape=circle,scale=0.5,draw=black, fill=black] (H) at (3*\i+3*cos{60}-cos{60},2*sin{60}*\j) {};

    \node[shape=circle,scale=0.5,draw=black, fill=black] (H) at (3*\i,2*sin{60}*\j) {};
    \node[shape=circle,scale=0.5,draw=black, fill=black] (H) at (3*\i - 3*cos{60},2*sin{60}*\j+sin{60}) {};
    \node[shape=circle,scale=0.5,draw=black, fill=black] (H) at (3*\i + cos{60},2*sin{60}*\j+sin{60}) {};  
    \node[shape=circle,scale=0.5,draw=black, fill=black] (H) at (3*\i - 3*cos{60} + cos{60},2*sin{60}*\j) {};

    }
    
\end{scope}


\begin{scope}[shift={(0,0)}, scale = 2.646, rotate = 19.11]

    \foreach \i in {-1,...,2} 
    \foreach \j in {-1,...,4} {
  
    \foreach \a in {0,120,-120} \draw[color=black, densely dotted, thick] (3*\i,2*sin{60}*\j) -- +(\a:1);
    \foreach \a in {0,120,-120} \draw[color=black, densely dotted, thick] (3*\i+3*cos{60},2*sin{60}*\j+sin{60}) -- +(\a:1);

    \foreach \a in {0,120,-120} \draw[color=black, densely dotted, thick] (1/2 + 3*\i,2*sin{60}*\j + 0.866) -- +(\a:1);
    \foreach \a in {0,120,-120} \draw[color=black, densely dotted, thick] (1/2 + 3*\i+3*cos{60},2*sin{60}*\j+sin{60} + 0.866) -- +(\a:1);

    \foreach \a in {0,120,-120} \draw[color=black, densely dotted, thick] (3*\i,2*sin{60}*\j) -- +(\a:-1);
    \foreach \a in {0,120,-120} \draw[color=black, densely dotted, thick] (3*\i+3*cos{60},2*sin{60}*\j+sin{60}) -- +(\a:-1);

    \foreach \a in {0,120,-120} \draw[color=black, densely dotted, thick] (1/2 + 3*\i,2*sin{60}*\j + 0.866) -- +(\a:-1);
    \foreach \a in {0,120,-120} \draw[color=black, densely dotted, thick] (1/2 + 3*\i+3*cos{60},2*sin{60}*\j+sin{60} + 0.866) -- +(\a:-1);
  
    }

    \foreach \i in {-1,...,2} 
    \foreach \j in {-1,...,4} {
  
    \node[shape=diamond,scale=0.7,draw=black, fill=black] (H) at (3*\i,2*sin{60}*\j) {};
    \node[shape=diamond,scale=0.7,draw=black, fill=black] (H) at (3*\i+3*cos{60},2*sin{60}*\j+sin{60}) {};
    \node[shape=diamond,scale=0.7,draw=black, fill=black] (H) at (3*\i-cos{60},2*sin{60}*\j+sin{60}) {};  
    \node[shape=diamond,scale=0.7,draw=black, fill=black] (H) at (3*\i+3*cos{60}-cos{60},2*sin{60}*\j) {};

    \node[shape=diamond,scale=0.7,draw=black, fill=black] (H) at (3*\i,2*sin{60}*\j) {};
    \node[shape=diamond,scale=0.7,draw=black, fill=black] (H) at (3*\i - 3*cos{60},2*sin{60}*\j+sin{60}) {};
    \node[shape=diamond,scale=0.7,draw=black, fill=black] (H) at (3*\i + cos{60},2*sin{60}*\j+sin{60}) {};  
    \node[shape=diamond,scale=0.7,draw=black, fill=black] (H) at (3*\i - 3*cos{60} + cos{60},2*sin{60}*\j) {};

    }

\end{scope}
\end{tikzpicture}
\end{center}
\caption{IEDS (Diamonds) for the Triangular Tiling of the Plane}
\label{fig: IEDS for triangular tiling}
\end{figure}

    Next, turn to $R_4$. Theorem \ref{thm: regular chi k n} shows that $\chi_{n,k}(R_4)=2$ if $(5,n)\mid k$.  Conversely, suppose $\chi_{n,k}(R_4)=2$. Then there exist $\alpha, \beta\in\Z$ so that 
    \begin{align*}
        \alpha + 4\beta & \equiv k \mod n,\\
        4\alpha +\beta  & \equiv k \mod n.
    \end{align*}
    Adding these equations shows that $5(\alpha+\beta)\equiv 2k \mod n$. Therefore $(5,n)\mid 2k$. As this is equivalent to $(5,n)\mid k$, we are done.

    Finally, consider $R_6$. Here, $\chi_{n,k}(R_6)=2$ if and only if there exist $\alpha, \beta\in\Z$ so that
    \begin{align*}
        \alpha + 3\beta & \equiv k \mod n,\\
        3\alpha +\beta & \equiv k \mod n.
    \end{align*}
    Multiplying the top equation by $3$ and subtracting the bottom equation implies that $8\beta \equiv 2k \mod n$. Therefore, $(8,n)\mid 2k$ is necessary. Conversely, if $(8,n)\mid 2k$, let $\beta$ be a solution to $8\beta \equiv 2k \mod n$ and define $\alpha = k - 3\beta.$ Then 
    \[ 3\alpha +\beta = 3k -8\beta \equiv k \mod n\]
    and we are done. 
\end{proof}


\begin{figure}[H]
\begin{center}
\begin{tikzpicture}[scale=0.73]

\begin{scope}[local bounding box=L]
    \foreach \i in {0,...,8}
    \foreach \j in {0,...,8} {

    \node[shape=circle,scale=0.5, draw=black, fill=black] (\i,\j) at (\i,\j) {};   

    }

    \foreach \i in {0,...,8}
    \foreach \j in {0,...,8} {
    
    \path [-, color=black, thick] (\i,\j) edge node[left] {} (\i+1,\j);
    \path [-, color=black, thick] (\i,\j) edge node[left] {} (\i,\j+1);

    \path [-, color=black, thick] (\i,\j) edge node[left] {} (\i-1,\j);
    \path [-, color=black, thick] (\i,\j) edge node[left] {} (\i,\j-1);

    }
\end{scope}


\clip (current bounding box.south west) rectangle (current bounding box.north east);
\begin{scope}[shift={(L.center)}, scale = sqrt(5), rotate = 26.57]
    \foreach \i in {-2,...,2}
    \foreach \j in {-2,...,2} {

    \node[shape=diamond,scale=0.7, draw=black, fill=black] (\i,\j) at (\i,\j) {};   

    }

    \foreach \i in {-2,...,2}
    \foreach \j in {-2,...,2} {
    
    \path [-, color=black, densely dotted, thick] (\i,\j) edge node[left] {} (\i+1,\j);
    \path [-, color=black, densely dotted, thick] (\i,\j) edge node[left] {} (\i,\j+1);

    \path [-, color=black, densely dotted, thick] (\i,\j) edge node[left] {} (\i-1,\j);
    \path [-, color=black, densely dotted, thick] (\i,\j) edge node[left] {} (\i,\j-1);

    }
\end{scope}
\end{tikzpicture}
\end{center}
\caption{IEDS (Diamonds) for the Square Tiling of the Plane}
\label{fig: IEDS for square tiling}
\end{figure}
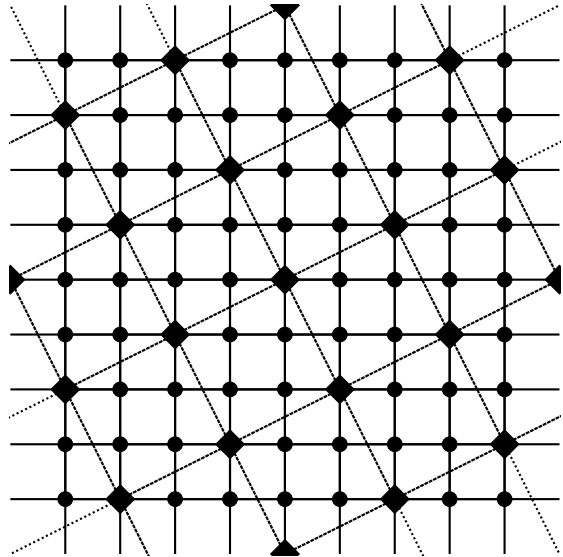

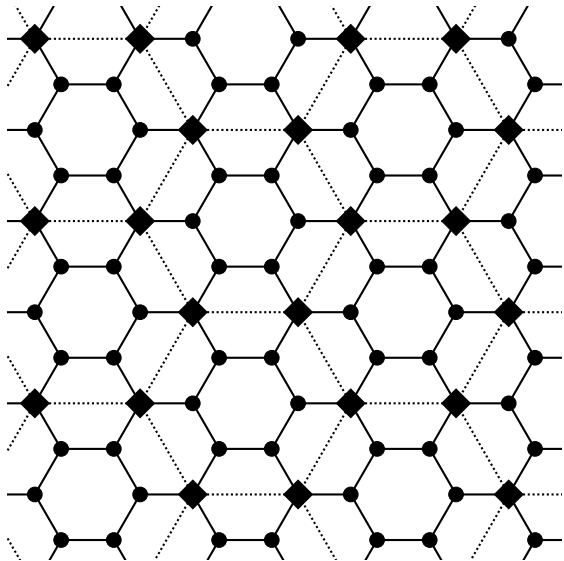
\begin{figure}[H]
\begin{center}
\begin{tikzpicture}[scale=0.7]

\begin{scope}[local bounding box=L]

\draw[-, color=white] (0.5,0.5) -- (0.5,11) -- (11,11) -- (11,0.5) -- (0.5,0.5);

\end{scope}
\clip (current bounding box.south west) rectangle (current bounding box.north east);
\begin{scope}[local bounding box=L]

    \foreach \i in {0,...,4} 
    \foreach \j in {0,...,8} {
  
    \foreach \a in {0,120,-120} \draw[thick] (3*\i,2*sin{60}*\j) -- +(\a:1);
    \foreach \a in {0,120,-120} \draw[thick] (3*\i+3*cos{60},2*sin{60}*\j+sin{60}) -- +(\a:1);
  
    }

    \foreach \i in {0,...,4} 
    \foreach \j in {0,...,8} {
  
    \node[shape=circle,scale=0.5, draw=black, fill=black] (H) at (3*\i,2*sin{60}*\j) {};
    \node[shape=circle,scale=0.5, draw=black, fill=black] (H) at (3*\i+3*cos{60},2*sin{60}*\j+sin{60}) {};
    \node[shape=circle,scale=0.5, draw=black, fill=black] (H) at (3*\i-cos{60},2*sin{60}*\j+sin{60}) {};  
    \node[shape=circle,scale=0.5, draw=black, fill=black] (H) at (3*\i+3*cos{60}-cos{60},2*sin{60}*\j) {};
    
    }
    
\end{scope}

\begin{scope}[shift={(1,0)}, scale = 2]

    \foreach \i in {-4,...,4} 
    \foreach \j in {-8,...,8} {
  
    \foreach \a in {0,120,-120} \draw[color=black,densely dotted, thick] (3*\i,2*sin{60}*\j) -- +(\a:1);
    \foreach \a in {0,120,-120} \draw[color=black,densely dotted, thick] (3*\i+3*cos{60},2*sin{60}*\j+sin{60}) -- +(\a:1);
  
    }

    \foreach \i in {-4,...,4} 
    \foreach \j in {-8,...,8} {
  
    \node[shape=diamond,scale=0.7, draw=black, fill=black] (H) at (3*\i,2*sin{60}*\j) {};
    \node[shape=diamond,scale=0.7, draw=black, fill=black] (H) at (3*\i+3*cos{60},2*sin{60}*\j+sin{60}) {};
    \node[shape=diamond,scale=0.7, draw=black, fill=black] (H) at (3*\i-cos{60},2*sin{60}*\j+sin{60}) {};  
    \node[shape=diamond,scale=0.7, draw=black, fill=black] (H) at (3*\i+3*cos{60}-cos{60},2*sin{60}*\j) {};
    
    }

\end{scope}
\end{tikzpicture}
\end{center}
\caption{IEDS (Diamonds) for the Hexagonal Tiling of the Plane}
\label{fig: IEDS for hex tiling}
\end{figure}

\section{Trees}\label{sec: trees}

Next we turn to trees. Theorems \ref{thm: baby tree example} and \ref{thm: perfect binary trees} below show that existence of $\chi_{n,k}(G)$ can be very complicated and chaotic. By way of a toy example to illustrate complexity, consider first the \emph{caterpillar tree} $C_{m_1, m_2}$ in Figure \ref{thm: baby tree example} where there are $m_1$ legs underneath vertex $x$ and $m_2$ legs underneath vertex $y$.
    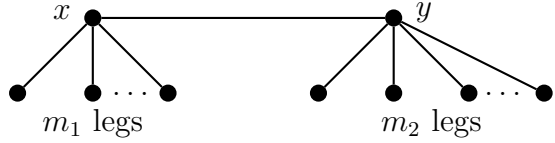
\begin{figure}[H]
        \centering
        \begin{tikzpicture}[thick]
          \node[circle, draw, fill, inner sep=2pt] (x) at (0,0) {};
          \node[circle, draw, fill, inner sep=2pt] (y) at (4,0) {};
          \node[left, xshift=-4pt, yshift=2pt] at (x) {$x$};
          \node[right, xshift=4pt, yshift=2pt] at (y) {$y$};
          \draw (x) -- (y);
          \foreach \i in {1,...,3}
            \node[circle, draw, fill, inner sep=2pt] (x\i) at (-\i+2,-1) {};
          \foreach \i in {1,...,3}
            \draw (x) -- (x\i);
          \foreach \i in {1,...,4}
            \node[circle, draw, fill, inner sep=2pt] (y\i) at (2+\i,-1) {};
          \foreach \i in {1,...,4}
            \draw (y) -- (y\i);
           \node at (0,-1.4) {$m_1$ legs};
           \node at (4.5,-1.4) {$m_2$ legs};
           \node at (.5,-1) {$\ldots$};
           \node at (5.45,-1) {$\ldots$};
        \end{tikzpicture}
        \caption{A Baby Tree}
        \label{figure: Illustrative Example, $C_{m_1, m_2}$}
    \end{figure}
    
\begin{theorem} \label{thm: baby tree example}
    For $k\in \Z$ and $n, m_1, m_2 \in\Z^+$, consider the caterpillar tree $C_{m_1, m_2}$ in Figure \ref{figure: Illustrative Example, $C_{m_1, m_2}$} where there are $m_1$ legs underneath vertex $x$ and $m_2$ legs underneath vertex $y$. Let
    \[ M := m_1m_2-m_1-m_2.\]
    Then $\chi_{n,k}(C_{m_1, m_2})$ exists if and only if 
    \[ (M,n) \mid m_1 k. \]
    This is equivalent to $(M,n) \mid m_2 k$. In this case, 
    \[
    \chi_{n,k} (C_{m_1, m_2}) = \begin{cases}
        2 & \text{if } n \mid \frac{m_1 m_2 k}{\gcd(m_1, m_2, n)}\,,\\
        3 & \text{if } n \nmid \frac{m_1 m_2 k}{\gcd(m_1, m_2, n)} \text { and } n\mid \frac{(m_1-m_2) k}{\gcd(m_1-2, m_2-2, n)}\,,\\
        4 & \text{else.}
    \end{cases}
    \]
\end{theorem}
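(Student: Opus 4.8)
\emph{Proof proposal.}

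The plan is to collapse every closed-neighborhood condition to a pair of linear congruences in the two hub labels. Given a labeling $\ell$, write $\alpha:=\ell(x)$ and $\beta:=\ell(y)$. Each leg of $x$ has closed neighborhood consisting of itself and $x$, so it must carry a label $\equiv k-\alpha\pmod n$; likewise every leg of $y$ carries a label $\equiv k-\beta\pmod n$. Once these residue classes are prescribed, the closed-neighborhood conditions at the legs hold automatically for \emph{any} choice of leg-labels in those classes, and the conditions at $x$ and at $y$ become
\[
(1-m_1)\alpha+\beta\equiv(1-m_1)k\pmod n,\qquad \alpha+(1-m_2)\beta\equiv(1-m_2)k\pmod n.
\]
Eliminating $\beta$ turns this into $M\,(k-\alpha)\equiv -m_2k\pmod n$, which is solvable in $\alpha$ exactly when $(M,n)\mid m_2k$; since $m_1m_2\equiv m_1+m_2\pmod{(M,n)}$ (because $(M,n)\mid M$), multiplying by $k$ shows that $(M,n)\mid m_2k$ and $(M,n)\mid m_1k$ are equivalent. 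This establishes the existence criterion and the ``equivalently'' remark.

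From here on assume a closed coloring with remainder $k\bmod n$ exists. As $C_{m_1,m_2}$ is a tree with an edge, $\chi(C_{m_1,m_2})=2$, so $\chi_{n,k}\ge 2$. For the upper bound $\chi_{n,k}\le 4$, pick $\alpha,\beta$ solving the displayed system, arrange $\alpha\ne\beta$ by replacing $\beta$ with $\beta+n$ if necessary (which does not affect the congruences), give all legs of $x$ a single common integer label $\ell_x\equiv k-\alpha$ with $\ell_x\ne\alpha$, and all legs of $y$ a single common $\ell_y\equiv k-\beta$ with $\ell_y\ne\beta$. This is a proper closed coloring using at most the four labels $\alpha,\beta,\ell_x,\ell_y$. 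It remains to place $\chi_{n,k}$ exactly in $\{2,3,4\}$.

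Next I would treat $\chi_{n,k}=2$ directly: in any proper $2$-coloring, $x$ and $y$ receive the two available labels, which forces every leg of $x$ to reuse $\ell(y)$ and every leg of $y$ to reuse $\ell(x)$; feeding this into the two hub congruences collapses them to $\alpha+\beta\equiv k$, $m_2\alpha\equiv 0$, $m_1\beta\equiv 0\pmod n$. Such $\alpha,\beta$ exist iff $k$ is a multiple of $\gcd\!\big(\tfrac{n}{(m_1,n)},\tfrac{n}{(m_2,n)}\big)$, and a prime-by-prime comparison rewrites this as $n\mid\frac{m_1m_2k}{\gcd(m_1,m_2,n)}$; hence this divisibility is equivalent to $\chi_{n,k}=2$.

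Finally, assuming $\chi_{n,k}>2$, I would analyze proper $3$-colorings. Their label set is $\{\alpha,\beta,c\}$ with $\alpha\ne\beta$, and the key dichotomy is whether the third color $c$ occurs among the legs of only one hub or of both. If $c$ occurs on legs of, say, $x$ only, then the legs of $y$ must reuse a hub label, and running this through the hub congruences reproduces exactly the three $2$-coloring equations above, which is impossible since $\chi_{n,k}>2$. So $c$ occurs on legs of both hubs, forcing $\alpha\equiv\beta\pmod n$; then with $\gamma:=\alpha-k$ the hub congruences become $(m_1-2)\gamma\equiv k$ and $(m_2-2)\gamma\equiv k\pmod n$, and conversely any solution $\gamma$ yields a genuine $3$-coloring (choose an integer $c\equiv k-\alpha$ distinct from $\alpha$ and $\beta$). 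Thus $\chi_{n,k}=3$ precisely when this two-congruence system is solvable, and $\chi_{n,k}=4$ otherwise. Subtracting the two congruences gives $(m_1-m_2)\gamma\equiv 0$, and resolving the system by tracking greatest common divisors is the step that must be reconciled with the stated divisibility $n\mid\frac{(m_1-m_2)k}{\gcd(m_1-2,m_2-2,n)}$. I expect this reconciliation --- carried out by a prime-by-prime analysis that uses the standing existence hypothesis to rule out degenerate local configurations --- together with verifying that no other arrangement of three labels has been overlooked, to be the main obstacle; the existence and two-color parts are routine once the hub reduction is in place.
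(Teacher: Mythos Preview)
Your outline is correct and matches the paper's argument; the only real difference is cosmetic --- you parameterize by the hub labels $\alpha=\ell(x)$, $\beta=\ell(y)$, whereas the paper uses the leg labels $\alpha_1,\alpha_2$ (related to yours by $\alpha_i\equiv k-\text{hub label}\bmod n$). Your case split for $3$-colorings is already complete: the paper's observation that $\alpha_1\equiv\ell(y)\Leftrightarrow\alpha_2\equiv\ell(x)$ is exactly your ``$c$ on one side only'' case collapsing to the $2$-coloring system, and $\alpha_1\equiv\alpha_2$ is your ``$c$ on both sides'' case. No other arrangements have been overlooked.

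On the two obstacles you flag. First, the standing existence hypothesis plays no role in the $3$-coloring reconciliation: any solution $\gamma$ to $(m_1-2)\gamma\equiv k$ and $(m_2-2)\gamma\equiv k\pmod n$ already produces a closed coloring, so existence is automatic there and no ``degenerate local configurations'' need to be ruled out. Second, the gcd computation need not be done prime by prime. Your pair of congruences is equivalent to the pair $(m_1-m_2)\gamma\equiv 0$, $(m_1-2)\gamma\equiv k$; the paper parametrizes the first as $\gamma=n'j$ with $n'=n/(n,m_1-m_2)$, substitutes into the second, and then uses the elementary identity $\gcd(an',\,n'd)=n'\gcd(a,d)$ (with $d=(n,m_1-m_2)$) together with $\gcd(m_1-2,\,m_1-m_2)=\gcd(m_1-2,\,m_2-2)$ to read off the divisibility $n\mid\frac{(m_1-m_2)k}{\gcd(m_1-2,m_2-2,n)}$ directly. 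The same parametrize-and-substitute trick handles the $2$-coloring condition without a prime-by-prime check.
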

        
\begin{proof}
    Using Lemma \ref{lem: venn} on the legs, it is easy to see that existence of a closed coloring $\ell$ with remainder $k \mod n$ requires that all legs under $x$ and all legs under $y$ share a common label $\mod n$, respectively.
    In particular, for a minimal closed coloring $\ell$, we may use the same label $\alpha_1\in \Z$ for all legs under $x$ and the same label $\alpha_2\in \Z$ for all legs under~$y$. Then $\ell(x)\equiv (k - \alpha_1) \mod n$ and $\ell(y)\equiv (k - \alpha_2) \mod n$, and if $\chi_{n,k} (C_{m_1, m_2})$ exists, then $\chi_{n,k} (C_{m_1, m_2}) \le 4$.

    We continue with some necessary conditions for the existence of a closed coloring.
    Lemma \ref{lem: venn} applied to $x$ and $y$ shows that $m_1\alpha_1 \equiv m_2 \alpha_2 \mod n$.
    The requirement $\sum_{v\in N[x]}\ell(v)\equiv k \mod n$ gives
    \[(k - \alpha_1)+(k - \alpha_2)+m_1\alpha_1 \equiv k \mod n,\]
    which simplifies to 
    \begin{align}
    \alpha_2 \equiv (m_1 - 1)\alpha_1 + k\, \mod n. \label{eq6.1}
    \end{align}
    Multiplying this equation by $m_2$ and using $m_1\alpha_1 \equiv m_2 \alpha_2 \mod n$ gives
    \begin{align}
    M\alpha_1 \equiv -m_2 k \mod n. \label{eq6.2}
    \end{align}
    This equation has a solution if and only if $(M,n) \mid m_2 k$. 
    Furthermore, as $Mk=m_1m_2k-m_1k-m_2k$, we see that this condition is equivalent to $(M,n) \mid m_1 k$.

    Conversely, if $(M,n) \mid m_2 k$, let $\alpha_1$ be a solution to the equation $M\alpha_1 \equiv -m_2 k \mod n$ as required by $\eqref{eq6.2}$ and let $\alpha_2 \equiv ((m_1 -1)\alpha_1 +k) \mod n$ as required by \eqref{eq6.1}. It is straightforward to verify that this results in $m_1\alpha_1 \equiv m_2 \alpha_2 \mod n$ and gives a closed coloring with remainder $k\mod n$. 
    As a result, we see that $\chi_{n,k}(C_{m_1, m_2})$ exists if and only if $(M,n) \mid m_2 k$. In this case, $\chi_{n,k}(C_{m_1, m_2})$ will be $4$ unless some of the labels $\alpha_1,\alpha_2, \ell(x), \ell(y)$ are congruent $\mod n$ and can be chosen to be equal. Thus, it remains to investigate the equations $\alpha_1 \equiv \ell(y) \mod n$, $\alpha_2 \equiv \ell(x) \mod n$, and $\alpha_1 \equiv \alpha_2\mod n$ for possible exceptional cases.
    
    In fact, it is straightforward to check that $\alpha_1 \equiv \ell(y) \mod n$ happens if and only if $\alpha_2 \equiv \ell(x) \mod n$ if and only if $m_1\alpha_1 \equiv 0 \mod n$, and we infer that $\chi_{n,k}(C_{m_1, m_2})$ will be $2$. This happens if and only if there is a solution to $m_1\alpha_1 \equiv 0 \mod n$ and $M\alpha_1\equiv -m_2 k \mod n$, where the latter equation simplifies to $m_2\alpha_1\equiv m_2 k \mod n$. Therefore $\alpha_1 = n' j$, where $n':=\frac{n}{(n,m_1)}$ and $j\in\Z$ satisfy $m_2 n' j \equiv m_2 k \mod n$. In turn, this happens if and only if 
    $(m_2 n', n) \mid m_2 k$. As $(m_2 n', n) = n' (m_2, (n, m_1)) = n' \gcd(m_1, m_2, n)$, the solution exists if and only if $n\mid \frac{(n,m_1) m_2 k}{\gcd(m_1, m_2, n)}$ if and only if $n\mid \frac{m_1 m_2 k}{\gcd(m_1, m_2, n)}$\,.

    It remains to consider the case $\alpha_1 \equiv \alpha_2\mod n$ which allows for closed $3$-colorings of $C_{m_1, m_2}$. As in the previous case, it is easy to check that $\alpha_1 \equiv \alpha_2\mod n$ happens if and only if  $(m_1-2)\alpha_1 \equiv -k \mod n$. This happens if and only if there is a solution to $(m_1-2)\alpha_1 \equiv -k \mod n$ and $M\alpha_1\equiv -m_2 k \mod n$, where the latter equation simplifies with the first one to $(m_1-m_2)\alpha_1\equiv 0 \mod n$. Therefore $\alpha_1 = n' j$, where $n':=\frac{n}{(n,m_1-m_2)}$ and $j\in\Z$ satisfy $(m_1-2) n' j \equiv -k \mod n$. In turn, this happens if and only if 
    $((m_1-2) n', n) \mid k$. As
    \begin{align*}
    ((m_1 -2) n', n) & = n' (m_1-2, (n, m_1-m_2))\\
    & = n' \gcd(m_1-2, n, m_1-m_2)\\ & = n' \gcd(m_1-2, m_2-2, n),
    \end{align*}
    the solution exists iff $n\mid \frac{(n,m_1-m_2) k}{\gcd(m_1-2, m_2-2, n)}$ iff $n\mid \frac{(m_1-m_2) k}{\gcd(m_1-2, m_2-2, n)}$\,.
\end{proof}

Next, turn to the \emph{rooted perfect binary tree of height $d$}, written~$T_{2,d}$.

\begin{lemma}\label{lem: perfect binary tree even same parity labels}
    Let $k\in \Z$ and $n,d\in\Z^+$. If $T_{2,d}$ admits a closed coloring with remainder $k\mod n$ and $n$ is even, then within each level of the tree $T_{2,d}$ the used integer labels of this coloring share the same parity.
\end{lemma}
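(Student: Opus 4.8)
The plan is to reduce the whole question modulo $2$ and then propagate the closed-neighborhood constraints from the leaves of $T_{2,d}$ toward the root. Since $n$ is even, any congruence modulo $n$ is in particular a congruence modulo $2$, so a closed coloring $\ell$ with remainder $k \mod n$ satisfies $\sum_{w \in N[v]} \ell(w) \equiv k \mod 2$ for every vertex $v$. Writing $\bar{\ell}(v)$ for the residue of $\ell(v)$ in $\Z_2$, the claim becomes that $\bar{\ell}$ is constant on each level of $T_{2,d}$. Since $d \ge 1$, the root level consists of a single vertex and is trivially constant; the content lies in pushing this constancy down one level at a time.

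The first step is to prove the identity that for every non-root vertex $v$, with parent $p$,
\[ \bar{\ell}(v) + \bar{\ell}(p) \equiv k \mod 2. \]
I would argue this by induction on the level of $v$, working from the leaf level $d$ back toward level $1$. If $v$ is a leaf, then $N[v] = \{v, p\}$ and the identity is exactly the closed-neighborhood condition at $v$. If $v$ lies on a level $i$ with $1 \le i \le d-1$, then $N[v]$ consists of $v$, its parent $p$, and its two children $c_1, c_2$; by the inductive hypothesis applied to $c_1$ and $c_2$, $\bar{\ell}(c_1) \equiv \bar{\ell}(c_2) \equiv k + \bar{\ell}(v) \mod 2$, so the two children together contribute $2(k + \bar{\ell}(v)) \equiv 0 \mod 2$ to the sum over $N[v]$, and the closed-neighborhood condition at $v$ collapses to $\bar{\ell}(v) + \bar{\ell}(p) \equiv k \mod 2$. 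The mechanism is simply that a sibling pair always carries the same residue modulo $2$ and hence cancels.

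The second step then deduces the lemma by a short induction on the level $i$, this time running from $i = 0$ up to $i = d$: all labels on level $i$ share one parity. The base case $i = 0$ is the singleton root. For the step, every vertex on level $i \ge 1$ has its parent on level $i-1$, and all of those parents have a common residue $\pi \in \Z_2$ by the inductive hypothesis; the identity from the first step then forces $\bar{\ell}(v) \equiv k + \pi \mod 2$ for every vertex $v$ on level $i$, which is independent of $v$. Hence $\bar{\ell}$ is constant on every level.

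I do not expect a serious obstacle here: the two load-bearing observations are the reduction modulo $2$ (which is exactly where evenness of $n$ enters) and the cancellation of sibling pairs. The only care required is organizational — taking the leaf level as the base of the downward induction and the root level as the base of the upward induction, and noting that the degenerate case $d = 1$ is already covered by the leaf base case — while the residual arithmetic, such as $\bar{\ell}(v) + \bar{\ell}(p) + 2(k + \bar{\ell}(v)) \equiv \bar{\ell}(v) + \bar{\ell}(p) \mod 2$, is routine.
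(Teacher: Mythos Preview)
Your argument is correct and is essentially the paper's own proof, just organized more cleanly: both reduce modulo $2$, use that sibling pairs cancel so that the parent--child parity relation $\bar\ell(v)+\bar\ell(p)\equiv k$ propagates from the leaves upward, and then read off level-constancy. The paper phrases this as a single bottom-up ``merge'' narrative while you split it into an explicit downward induction for the identity and an upward induction for constancy, but the mechanism is identical.
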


\begin{proof} Suppose $\ell$ is a closed coloring of $T_{2,d}$ with remainder $k\mod n$.
    Start at the bottom, see Figure \ref{fig: bottom perfect binary tree}. Any leaves that share a parent have identical labels $\mod n$ by Lemma \ref{lem: venn} and, in particular, the same parity as $n$ is even. The requirement of being a closed coloring with remainder $k\mod n$ at the leaves means that the parity of the parent's label is shifted from the parity of the labels at the leaves by $k$, meaning that it retains the same parity if $k$ is even and switches the parity if $k$ is odd.
    \begin{figure}[H]
        \centering
        \begin{tikzpicture}
            \draw (0,0) -- (90:1);
            \fill (90:1) circle[radius=2pt];
            
            \draw (0,0) -- (270+50/2:1);
            \fill (270+50/2:1) circle[radius=2pt];
            
            \draw (0,0) -- (270-50/2:1);
            \fill (270-50/2:1) circle[radius=2pt];
            
            \fill (0,0) circle[radius=2pt];
            
            \node[left] at (270-50/2:1) {$[\alpha]$};
            
            \node[right] at (270+50/2:1) {$[\alpha]$};
            
            \node[right] at (0,0) {$[k-\alpha]$};
        \end{tikzpicture}
        \caption{Pair of Leaves Sharing a Parent in $T_{2,d}$}
        \label{fig: bottom perfect binary tree}
    \end{figure}
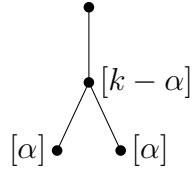
    
    Moving up a level, see Figure \ref{fig: mergeing of leaves in perfect binary} for when two pairs of leaves from the previous paragraph merge at the grandparent of each. The requirement of being a closed coloring with remainder $k\mod n$ at each of the parents forces the parity of the grandparent's label to be shifted from the parity of the respective parent's label again by $k$. As the coloring is consistent, this forces backwards parity equality on the leaves and their parents.
    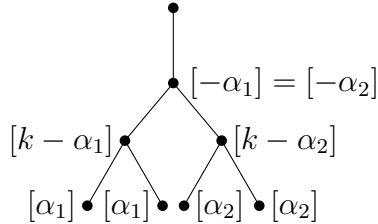
\begin{figure}[H]
        \centering
         \begin{tikzpicture}
            \draw (0,0) -- (90:1);
            \fill (90:1) circle[radius=2pt];
            
            \draw (0,0) -- (270+80/2:1);
            \fill (270+80/2:1) circle[radius=2pt];
            \draw (270+80/2:1) -- ++(270+30:1);
            \draw (270+80/2:1) -- ++(270-30:1);
            \fill (270+80/2:1) ++(270+30:1) circle[radius=2pt];
            \fill (270+80/2:1) ++(270-30:1) circle[radius=2pt];
            
            \draw (0,0) -- (270-80/2:1);
            \fill (270-80/2:1) circle[radius=2pt];
            \draw (270-80/2:1) -- ++(270-30:1);
            \draw (270-80/2:1) -- ++(270+30:1);
            \fill (270-80/2:1) ++(270+30:1) circle[radius=2pt];
            \fill (270-80/2:1) ++(270-30:1) circle[radius=2pt];
            
            \fill (0,0) circle[radius=2pt];
            
            \node[left] at (270-80/2:1) {$[k-\alpha_1]$};
            \node[left] at ($(270-80/2:1)+(270-30:1)$) {$[\alpha_1]$};
            \node[left] at ($(270-80/2:1)+(270+30:1)$) {$[\alpha_1]$};
            
            \node[right] at (270+80/2:1) {$[k-\alpha_2]$};
            \node[right] at ($(270+80/2:1)+(270-30:1)$) {$[\alpha_2]$};
            \node[right] at ($(270+80/2:1)+(270+30:1)$) {$[\alpha_2]$};
            
            \node[right] at (0,0) {\,$[-\alpha_1] = [-\alpha_2]$};
        \end{tikzpicture}
        \caption{Merging of Leaves in $T_{2,d}$}
        \label{fig: mergeing of leaves in perfect binary}
    \end{figure}

    As we continue to move up the tree one level at a time, the same argument and induction apply to show that the parity of the label at the point of merger of any two lower branches is shifted by $k$ from the parities of the labels of its children, which again forces backwards parity compatibility.
\end{proof}

In addition, we need the following rather technical result in preparation of Lemma \ref{lem: perfect binary tree same row labels}.

\begin{lemma}\label{lem: perfect binary tree same row labels2}
    Let $k\in \Z$ and $n,d\in\Z^+$ with $n$ being even. Let $v_0$ denote the root of $T_{2,d}$, and let $v_1$ and $v'_1$ denote the children of $v_0$. Further, let $T$ and $T'$ denote the two rooted perfect binary subtrees of $T_{2,d}$ of height $d-1$ with roots $v_1$ and $v'_1$, respectively. Assume that $T_{2,d}$ admits a closed coloring with remainder $k\mod n$ that is constant on each of the respective levels of the subtrees $T$ and $T'$. Then $T_{2,d}$ also admits a closed coloring with remainder $k\mod n$ that is constant on every level of $T_{2,d}$.
\end{lemma}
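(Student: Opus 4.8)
The plan is to reduce the existence of a fully level-constant coloring to the solvability of a single linear congruence modulo $n$, and then to establish that solvability by combining the two given subtree colorings with Lemma~\ref{lem: perfect binary tree even same parity labels}. The case $d=1$ is immediate (the two leaf conditions of $T_{2,1}$ force $\ell(v_1)\equiv\ell(v'_1)\pmod n$, so replacing those two labels by a common integer representative already gives a level-constant coloring), so assume $d\ge 2$.

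First I would record the form of a coloring of $T_{2,d}$ that is constant on each level of $T$ and of $T'$. Counting levels from the root $v_0$ (level $0$) down to the leaves (level $d$), let $a_i$ and $b_i$ be the common labels on level $i$ of $T$ and of $T'$, respectively, and let $c=\ell(v_0)$. Viewed inside $T_{2,d}$, the closed-coloring conditions at all leaves and at all internal vertices of $T$ other than $v_1$ are exactly the recursion $a_d+a_{d-1}\equiv k$ and $a_{i-1}\equiv k-a_i-2a_{i+1}\pmod n$ for $d-1\ge i\ge 2$; hence $a_i\equiv p_i a_d+q_i\pmod n$ for constants $p_i,q_i$ depending only on $k,n,d$, with $p_d=1$, $p_{d-1}=-1$, $p_{i-1}=-p_i-2p_{i+1}$, and the parallel recursion for the $q_i$. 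The same holds for $b_i$ with the same $p_i,q_i$. Eliminating $c$ from the three conditions at $v_0$, $v_1$, $v'_1$ yields the two relations $a_1\equiv 2b_2$ and $b_1\equiv 2a_2\pmod n$.

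Next I would exhibit the desired coloring as a one-parameter family. For $t\in\Z$ set $m_i:=p_i t+q_i$ on level $i$ for $1\le i\le d$ and $c':=k-2m_1$ on $v_0$. By the recursion, the leaf and internal-vertex conditions inside the two (now identical) subtrees hold automatically; the condition at $v_0$ holds since $c'+2m_1=k$; and the conditions at $v_1$ and $v'_1$ both reduce to the single congruence $(p_1-2p_2)\,t\equiv 2q_2-q_1\pmod n$. Writing $A:=p_1-2p_2$ and $B:=2q_2-q_1$, it remains to prove $(A,n)\mid B$. Adding the relations $a_1\equiv 2b_2$ and $b_1\equiv 2a_2$ after substituting $a_i=p_ia_d+q_i$, $b_i=p_ib_d+q_i$ gives $A(a_d+b_d)\equiv 2B\pmod n$. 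Reducing $p_{i-1}=-p_i-2p_{i+1}$ modulo $2$ shows every $p_i$ is odd, so $A$ is odd; and Lemma~\ref{lem: perfect binary tree even same parity labels} applied to level $d$ gives $a_d\equiv b_d\pmod 2$, so $a_d+b_d=2u$ for some $u\in\Z$. Then $2Au\equiv 2B\pmod n$, and since $n$ is even this yields $Au\equiv B\pmod{n/2}$, i.e.\ $(A,n/2)\mid B$. Because $A$ is odd and $n$ is even, $(A,n)=(A,n/2)$, so $(A,n)\mid B$ and a suitable $t$ exists, producing the level-constant coloring.

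The step I expect to be the main obstacle is precisely this last ``factor of $2$'': naively averaging or adding the two given subtree colorings only yields a symmetric coloring valid modulo $n/2$, and upgrading to modulo $n$ genuinely requires both the parity lemma (to know $a_d+b_d$ is even) and the combinatorial observation that the coefficient $A=p_1-2p_2$ is always odd, which together force $(A,n)=(A,n/2)$.
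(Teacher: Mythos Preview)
Your proof is correct and takes a genuinely different route from the paper. The paper averages the given coloring with its reflection under an order-two automorphism swapping $T$ and $T'$; by Lemma~\ref{lem: perfect binary tree even same parity labels} this average is integer-valued, and it is automatically a level-constant closed coloring with remainder $k\bmod\frac{n}{2}$ satisfying the full $\bmod\ n$ condition at $v_0$. The paper then \emph{corrects} this averaged coloring level by level: descending from the root, at each step it either keeps the current level's labels on $T$ or shifts them all by $n$ (in the original $\ell$) so that the recomputed average satisfies the $\bmod\ n$ condition there as well.

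Your approach instead parametrizes all level-constant closed colorings by the leaf label $t$, reduces existence to the single congruence $At\equiv B\pmod n$ with $A=p_1-2p_2$, and then extracts solvability directly from the two subtree relations $a_1\equiv 2b_2$, $b_1\equiv 2a_2$ together with the parity lemma and the nice observation that all $p_i$ are odd (whence $A$ is odd and $(A,n)=(A,n/2)$). This is cleaner algebraically and pinpoints the obstruction as a gcd condition; the paper's argument is more hands-on and constructive, and its ``average, then repair level by level'' template is what they reuse (without the parity complication) for odd $n$ in the next lemma.
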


\begin{proof}
    Given any closed coloring $\ell$ of $T_{2,d}$ with remainder $k\mod n$ such that $\ell$ is constant on each of the respective levels of the subtrees $T$ and $T'$, pick a graph automorphism $\sigma$ of $T_{2,d}$ of order two with $\sigma(v_1) =v'_1$. Note that $\sigma$ provides a graph isomorphism between $T$ and $T'$, and consider the new labeling $\ell'$ given by
    \[ \ell' = \frac{\ell + \ell\circ\sigma}{2}\,.\]
    With Lemma \ref{lem: perfect binary tree even same parity labels}, $\ell'$ is a $\Z$-labeling of $T_{2,d}$. Moreover, $\ell'$ is readily a closed coloring of $T_{2,d}$ with remainder $k\mod \frac n2$ that is constant on every level of $T_{2,d}$. However, while the condition
    \begin{align}
   \sum_{u\in N[v]} \ell'(u) \equiv k\mod n \label{eq6.3}
    \end{align}
    is satisfied for $v := v_0$, in general, $\sum_{u\in N[v]} \ell'(u) \equiv (k+ \frac n2) \mod n$ may also be possible. In the following, we will demonstrate that one can adjust the coloring $\ell$ on the vertices of the subtree $T$ by increasing some of the labels by $n$ such that Condition \eqref{eq6.3} holds for all vertices $v$ of~$T_{2,d}$.
    
    For this, for each $i=2,\ldots,d$, pick a vertex $v_i$ of $T$ at distance $i$ from $v_0$. We will now adjust
    the coloring $\ell$ on $T$ recursively, starting at the root of $T$ and moving down level by level to the bottom of the tree $T$.

    Starting with the root $v_1$ of $T$, if Condition \eqref{eq6.3} is satisfied for $v := v_1$, we will keep the current label of $v_1$. However, if Condition~\eqref{eq6.3} fails for $v := v_1$, then $\sum_{u\in N[v_1]} \ell'(u) \equiv (k+ \frac n2) \mod n$ and we increase the label of $v_1$ by $n$. In either case, one verifies
    that we found a closed coloring $\ell$ such that Condition \eqref{eq6.3} is satisfied for both $v := v_0$ and $v := v_1$.

    Continue with $v_2$. If Condition \eqref{eq6.3} is satisfied for $v := v_2$, we will keep the current labels of the children of $v_1$. However, if Condition~\eqref{eq6.3} fails for $v := v_2$, then $\sum_{u\in N[v_2]} \ell'(u) \equiv (k+ \frac n2) \mod n$ and we increase the labels of the children of $v_1$ by $n$. In either case, one verifies that we found a closed coloring $\ell$ such that Condition \eqref{eq6.3} is satisfied for all $v \in \{v_0, v_1, v_2\}$.

    As we continue to move down the tree $T$ one level at a time, we will finally end up with a closed coloring $\ell$ of $T_{2,d}$ with remainder $k\mod n$ that is constant on each of the respective levels of the subtrees $T$ and $T'$ such that Condition \eqref{eq6.3} is satisfied for all $v \in \{v_0, v_1,\ldots, v_d\}$. Thus, the corresponding $\ell'$ will be a closed coloring with remainder $k\mod n$.
\end{proof}

Our next result tells us that it suffices to limit ourselves to closed colorings which are constant on every level of the tree $T_{2,d}$.

\begin{lemma}\label{lem: perfect binary tree same row labels}
    Let $k\in \Z$ and $n,d\in\Z^+$. If $T_{2,d}$ admits a closed coloring with remainder $k\mod n$, then it admits one in which nodes within each level of the tree $T_{2,d}$ share the same label.
\end{lemma}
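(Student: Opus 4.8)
The plan is to split on the parity of $n$. When $n$ is odd there is a direct symmetrization argument; when $n$ is even I would induct on the height $d$, leaning on Lemmas~\ref{lem: perfect binary tree even same parity labels} and~\ref{lem: perfect binary tree same row labels2}, which are built for exactly this purpose.

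\emph{The odd case.} Suppose $n$ is odd and $\ell$ is a closed coloring of $T_{2,d}$ with remainder $k\bmod n$. Replacing each label by its residue in $\{0,\dots,n-1\}$ changes no closed neighborhood sum modulo $n$, so we may pass to a map $\bar\ell\colon V\to\Z_n$. Let $\Gamma=\mathrm{Aut}(T_{2,d})$; its order is a power of $2$, hence a unit in $\Z_n$, so we may set $\bar\ell'(v)=|\Gamma|^{-1}\sum_{\gamma\in\Gamma}\bar\ell(\gamma v)$ in $\Z_n$. Since each $\gamma$ carries $N[v]$ bijectively onto $N[\gamma v]$, we get $\sum_{u\in N[v]}\bar\ell'(u)=|\Gamma|^{-1}\sum_{\gamma}\sum_{u'\in N[\gamma v]}\bar\ell(u')=k$ in $\Z_n$, so $\bar\ell'$ is again a closed coloring with remainder $k\bmod n$. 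Moreover $\bar\ell'$ is $\Gamma$-invariant, and $\Gamma$ acts transitively on each level of $T_{2,d}$ (a standard property of the iterated wreath product), so $\bar\ell'$ is constant on each level; lifting back to $\Z$ via residues in $\{0,\dots,n-1\}$ completes this case.

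\emph{The even case.} Suppose $n$ is even. I would prove by induction on $d$ the following strengthening, in which the remainder at the root is allowed to differ from the remainder elsewhere: for all $k,r,a\in\Z$, if $T_{2,d}$ admits a $\Z$-labeling $\ell$ with $\sum_{u\in N[v]}\ell(u)\equiv k\bmod n$ at every non-root vertex $v$, with $\sum_{u\in N[v_0]}\ell(u)\equiv r\bmod n$ at the root $v_0$, and with $\ell(v_0)\equiv a\bmod n$, then $T_{2,d}$ admits such a labeling that is constant on each level; the lemma is the case $r=k$. The extra data $r$ and $a$ are needed because restricting $\ell$ to the height-$(d-1)$ subtree $T$ rooted at a child $v_1$ of $v_0$ produces a labeling whose closed neighborhood sum at $v_1$, computed \emph{inside} $T$, is $k-\ell(v_0)$, so the inductive call is genuinely nonuniform, and one must remember $\ell(v_1)\bmod n$ in order to glue back together. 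The base case $d=1$ is immediate from Lemma~\ref{lem: venn}: the two leaves are forced to agree modulo $n$, so replacing one label by the other yields the desired level-constant labeling. For $d\ge 2$: let $v_1,v_1'$ be the children of $v_0$, let $T,T'$ be the height-$(d-1)$ subtrees they root, and put $a:=\ell(v_0)$, $b:=\ell(v_1)$, $b':=\ell(v_1')$. The restrictions of $\ell$ meet the inductive hypothesis on $T$ with parameters $(k,k-a,b)$ and on $T'$ with parameters $(k,k-a,b')$; the hypothesis replaces them by level-constant labelings $\ell_T$, $\ell_{T'}$ with the same parameters. Reassembling $\ell_T$ on $T$, $\ell_{T'}$ on $T'$, and the value $a$ at $v_0$ gives a labeling of $T_{2,d}$: at $v_1$ the sum becomes $a+(k-a)\equiv k$, similarly at $v_1'$, all other non-root sums are unchanged and $\equiv k$, and the sum at $v_0$ is $a+b+b'\equiv r$; it is constant on each level of $T$ and of $T'$, but need not be constant on each level of $T_{2,d}$, since $\ell_T$ and $\ell_{T'}$ were produced independently. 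That is precisely the hypothesis of Lemma~\ref{lem: perfect binary tree same row labels2}, whose proof (together with Lemma~\ref{lem: perfect binary tree even same parity labels}) applies verbatim in this nonuniform setting: the root is a one-element level, so the parity statement is vacuous there, and the automorphism $\sigma$ used fixes $v_0$, so the sum at $v_0$ and the value at $v_0$ are automatically preserved. Its conclusion is a labeling of $T_{2,d}$ that is constant on every level, with the same $k$, $r$, and root residue $a$, closing the induction.

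\emph{The main obstacle.} The delicate point is the inductive step in the even case: one must find a statement strong enough to survive reassembly at the root --- hence the auxiliary parameters $a$ and $r$ that do not appear in the final lemma --- while remaining provable, and then recognize that the reassembled coloring is a genuine closed coloring but only level-constant on the two subtrees, so that Lemma~\ref{lem: perfect binary tree same row labels2} is exactly the tool needed to finish. A secondary, essentially routine, point is confirming that Lemmas~\ref{lem: perfect binary tree even same parity labels} and~\ref{lem: perfect binary tree same row labels2} remain valid when the remainder at the root is permitted to differ from the remainder at the other vertices.
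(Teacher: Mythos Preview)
Your proof is correct and follows essentially the same strategy as the paper's: split on the parity of $n$, average over automorphisms in the odd case, and recursively invoke Lemma~\ref{lem: perfect binary tree same row labels2} in the even case. The paper averages over order-$2$ automorphisms one at a time (bottom up) rather than over the full automorphism group at once, and for even $n$ it simply says ``starting at the bottom of $T_{2,d}$ and working upwards with the help of Lemma~\ref{lem: perfect binary tree same row labels2}''; your top-down induction with the auxiliary parameters $r$ and $a$ makes explicit the nonuniformity at subtree roots that the paper's one-line argument leaves implicit.
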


\begin{proof}
    We first consider the case of odd $n$. Let $\sigma$ denote any graph automorphism of $T_{2,d}$ of order two and let $\ell$ be a closed coloring of $T_{2,d}$ with remainder $k\mod n$. Note that $\sigma$ preserves levels, and consider the new labeling $\ell'$ given by
    \[ \ell' = \frac{\ell + \ell\circ\sigma}{2}\,,\]
    where division by $2$ is interpreted as multiplication by a multiplicative inverse of $2 \mod n$. 
    
    Then $\ell'$ is still a closed coloring of $T_{2,d}$ with remainder $k\mod n$, but it is constant on the orbits of $\sigma$. Starting at the bottom of $T_{2,d}$ and working upwards with the automorphisms that flip the branches below a vertex, eventually gives the result.

    We next consider the case of even $n$. Again, let $\ell$ be a closed coloring of $T_{2,d}$ with remainder $k\mod n$. This time, starting at the bottom of $T_{2,d}$ and working upwards with the help of Lemma \ref{lem: perfect binary tree same row labels2} gives the desired result.
\end{proof}

\begin{theorem}\label{thm: perfect binary trees}
    Let $k \in \Z$ and $n,d\in \Z^+$. Let 
    \[ f(\alpha, t) 
    := \frac{(k-\alpha)t + \alpha}{(1-t)(2t^2 + t +1)} \] and expand $f$ as a power series in $t$ as
    \[ f(\alpha,t) = \sum_{i=0}^\infty f_i(\alpha) t^i.\]
    Then $\chi_{n,k}(T_{2,d})$ exists if and only if there exists $\alpha\in\Z$ such that
    \[ f_{d+1}(\alpha) \equiv 0 \mod n.\]
\end{theorem}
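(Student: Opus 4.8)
The plan is to combine the two reductions already in place with a short generating-function bookkeeping. First, since $\chi(T_{2,d})=2<\infty$, Theorem~\ref{thm: remainder k mod n existence criteria and upper bound by n chi} shows that $\chi_{n,k}(T_{2,d})$ exists if and only if $T_{2,d}$ admits \emph{some} closed coloring with remainder $k\mod n$, and Lemma~\ref{lem: perfect binary tree same row labels} lets us assume such a coloring is constant on each level. Writing $x_0,x_1,\dots,x_d\in\Z$ for the common labels on levels $0$ (the root) through $d$ (the leaves), being a closed coloring with remainder $k\mod n$ is exactly the system
\[
x_d+x_{d-1}\equiv k,\qquad x_{i-1}+x_i+2x_{i+1}\equiv k\ \ (1\le i\le d-1),\qquad x_0+2x_1\equiv k,
\]
all congruences $\mod n$ (the leaf equation, the interior equations, and the root equation).

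Next I would reparametrize from the leaves. Put $\alpha:=x_d$ and $y_j:=x_{d-j}$ for $0\le j\le d$; then the leaf and interior equations say precisely that $y_0=\alpha$ and $y_j+y_{j-1}+2y_{j-2}\equiv k\mod n$ for $1\le j\le d$, with the convention $y_{-1}:=0$. The crux is that the coefficient sequence of $f(\alpha,t)$ is characterized over $\Z$ by exactly this recurrence: clearing denominators in $f(\alpha,t)=\frac{(k-\alpha)t+\alpha}{(1-t)(2t^2+t+1)}$ gives $f(\alpha,t)\,(2t^2+t+1)=\frac{(k-\alpha)t+\alpha}{1-t}$, and comparing coefficients of $t^j$ yields $f_0(\alpha)=\alpha$ together with $f_j(\alpha)+f_{j-1}(\alpha)+2f_{j-2}(\alpha)=k$ for every $j\ge1$ (convention $f_{-1}(\alpha):=0$); each $f_j(\alpha)$ lies in $\Z$ because the denominator of $f$ has constant term $1$. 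Hence the leaf and interior congruences force $y_j\equiv f_j(\alpha)\mod n$ for all $0\le j\le d$ by induction on $j$, and conversely the assignment $x_i:=f_{d-i}(\alpha)$ satisfies all of them identically over $\Z$.

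It remains to see what the root equation contributes. Since $x_0+2x_1=y_d+2y_{d-1}$ and the recurrence at index $j=d+1$ reads $f_{d+1}(\alpha)+f_d(\alpha)+2f_{d-1}(\alpha)=k$, i.e.\ $f_d(\alpha)+2f_{d-1}(\alpha)=k-f_{d+1}(\alpha)$, the root congruence $x_0+2x_1\equiv k\mod n$ is equivalent to $f_{d+1}(\alpha)\equiv 0\mod n$. Putting it together: a level-constant closed coloring with remainder $k\mod n$ exists if and only if $f_{d+1}(\alpha)\equiv 0\mod n$ for some $\alpha\in\Z$ — for the forward direction take $\alpha=x_d$ and run the induction, for the backward direction take $x_i:=f_{d-i}(\alpha)$ and verify the three families of equations. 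I expect the only real difficulty to be bookkeeping: aligning the tree's level indices with the sequence indices, using the phantom term $y_{-1}=0$ so that the leaf equation is absorbed into the recurrence, and — the genuine crux — confirming that it is $f_{d+1}$, not $f_d$, that the root equation pins down (a single shift in the recurrence index). A useful sanity check is to test small $d$: for $d=1$ the condition $f_2(\alpha)=-\alpha\equiv0\mod n$ is always solvable (consistent with $\chi_{n,k}(S_2)=2$), while for $d=2$ it becomes $f_3(\alpha)=3\alpha-k\equiv0\mod n$, solvable iff $(3,n)\mid k$.
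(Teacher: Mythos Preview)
Your proposal is correct and follows essentially the same approach as the paper's proof: reduce to level-constant colorings via Lemma~\ref{lem: perfect binary tree same row labels}, observe that the leaf and interior equations amount to the recurrence $f_j+f_{j-1}+2f_{j-2}=k$ encoded by the generating function, and see that the root equation becomes $f_{d+1}(\alpha)\equiv 0\mod n$. The only cosmetic difference is that the paper indexes levels from the leaves (so $x_0$ is the leaf label from the outset), whereas you index from the root and then reparametrize via $y_j=x_{d-j}$; the resulting sequences and recurrences coincide.
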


\begin{proof}
    By Lemma \ref{lem: perfect binary tree same row labels}, a closed coloring with remainder $k\mod n$ of $T_{2,d}$ exists if and only if one exists with constant labels within each level of the tree $T_{2,d}$. Suppose we label the nodes of each level of $T_{2,d}$, starting at the bottom and ending at the root of the tree, with $x_0, x_1, \ldots, x_d$, respectively. This provides a closed coloring with remainder $k\mod n$ if and only if
    \begin{enumerate}
        \item $x_0 + x_1 \equiv k \mod n$,
        \item $x_i + x_{i-1} + 2x_{i-2} \equiv k \mod n$ \,\, for $2\leq i \leq d$,
        \item $x_d + 2x_{d-1} \equiv k \mod n$.
    \end{enumerate}

    Now Equations $(1)$ and $(2)$ determine all $x_i$ in terms of $x_0$. Equation~$(3)$ then determines if the resulting labeling ends up being a closed coloring.

    For $\alpha\in\Z$, use $(1)$ and $(2)$ to recursively define
    \[ x_0 = \alpha, \, x_1 = k - \alpha, \, x_i = k - x_{i-1} - 2x_{i-2}\quad \text{for $i \geq 2$.}\]
    Note that we have a closed coloring if and only if Equation $(3)$ is satisfied if and only if $x_{d+1} \equiv 0 \mod n$ for some $\alpha \in \Z$.  
    
    Define the formal power series
    \[ f(\alpha, t) = \sum_{i=0}^\infty x_i \, t^i.\]
    Using the recursive definition of $x_i$ shows that 
    \[ (1+t+2t^2)f(\alpha, t) = \alpha + \sum_{i=1}^\infty kt^i= \alpha + \frac{kt}{1-t}\,.\]
    From this, it follows that 
    \[ f(\alpha, t) = \frac{(k-\alpha)t + \alpha}{(1-t)(2t^2 + t +1)}\,.\qedhere \]
    %
    %
\end{proof}

One may calculate that 
\begin{align*}
    f(\alpha,t) =& \alpha + (k-\alpha)t -\alpha t^2 +(3\alpha-k)t^3
        +(2k-\alpha)t^4 +(k-5\alpha)t^5 \\
        & +(7\alpha-4k)t^6
        +3(\alpha+k)t^7 +(6k-17\alpha)t^8 +11(\alpha - k)t^9 \\
        & +23\alpha t^{10} +(23k-45\alpha)t^{11} -(\alpha+22k)t^{12}
        +(91\alpha-23k)t^{13} \\
        & +(68k-89\alpha)t^{14}
        -3(31\alpha+7k)t^{15} +(271\alpha-114k)t^{16} +\ldots\,.
\end{align*}
From this, we may read off that a closed coloring with remainder $k\mod n$ of $T_{2,d}$ exists for all choices of $k\in \Z$ and $n\in\Z^+$ when $d =0, 1, 3, 6, 8, 9, 11$. 
However, $d=2$ requires $(3,n)\mid k$, $d=4$ requires $(5,n)\mid k$, $d=5$ requires $(7,n)\mid k$, $d=7$ requires $(17,n)\mid k$, and $d=10$ requires $(45,n)\mid k$. The reader may read off the additional requirements up to $d=15$ from the expansion above.
It would be interesting to see if patterns could be discerned from the power series.

\section{Generalized Petersen Graphs}\label{sec: gen petersen}

Write $G(m,j)$ for the \emph{generalized Petersen graph} where $m,j\in\Z^+$ with $m\geq 3$ and $1\leq j < \frac{m}{2}$. We will use the notation $V=\{v_i, u_i \mid i\in \Z_m\}$ for the vertex set of $G(m,j) =(V,E)$ with corresponding edge set
\[ E= \{v_i v_{i+[1]}, v_i u_i, u_i u_{i+[j]} \mid i\in \Z_m\}, \]
where $[1], [j] \in \Z_m$ denote congruence classes modulo $m$. 
We may refer to the $v_i$ as the \emph{exterior vertices} and the $u_i$ as the \emph{interior vertices}.
Observe that the interior vertices break up into $(m,j)$ cycles of size~$\frac{m}{(m,j)}$.


    


As $G(m,j)$ is $3$-regular, the constant labeling of $1$ generates a closed coloring with remainder $4\mod n$ for any $n\in \Z^+$. Because the sum of closed colorings with remainders $k_i\mod n$, $i=1,2$, is a closed coloring with remainder $(k_1+k_2)\mod n$, it follows that the existence of $\chi_{n,k}(G(m,j))$ depends only on the residue class of $k\mod 4$. In particular, a closed coloring with remainder $k\mod n$ always exists when $k\equiv 0\mod 4$. 

Moreover, as the product of a constant $c\in\Z$ with a closed coloring with remainder $k\mod n$ is a closed coloring with remainder $ck\mod n$, it follows that $\chi_{n,1}(G(m,j))$ exists if and only if $\chi_{n,-1}(G(m,j))$ exists. Furthermore, if $\chi_{n,1}(G(m,j))$ exists, then $\chi_{n,k}(G(m,j))$ exists for all $k$.

In summary, the analysis for the existence of $\chi_{n,k}(G(m,j))$ is reduced to the study of $k=1$ (which gives existence of all $\chi_{n,k}(G(m,j))$) and, when $\chi_{n,1}(G(m,j))$ does not exist, to the study of $k=2$. If both of these fail, $\chi_{n,k}(G(m,j))$ exists if and only if $4\mid k$.

We begin with the following result for $k=1$.
\begin{theorem}\label{lem: k=1 main lem for petersen n,k's}
    Let $n,m,j\in \Z^+$. 
    \begin{enumerate}
        \item If $4\nmid n$, then $\chi_{n,1}(G(m,j))$ exists.
        \item If $4 \mid n$ and $2\nmid m$, then $\chi_{n,1}(G(m,j))$ does not exist.
        \item If $4 \mid n$, $2\mid m$, and $2\nmid j$, then $\chi_{n,1}(G(m,j))$ exists if and only if $4\mid m$.
        \item If $4\mid n$, $8 \nmid n$, $2\mid m$, and $2\mid j$, then $\chi_{n,1}(G(m,j))$ exists.
        \item If $8\mid n$, $2\mid m$, $4 \nmid m$, and $2\mid j$, then $\chi_{n,1}(G(m,j))$ does not exist.
        \item If $16\mid n$, $4\mid m$, $8 \nmid m$, and $2\mid j$, then $\chi_{n,1}(G(m,j))$ does not exist.
        \item If $8\mid n$, $16\nmid n$, $4\mid m$, and $2\mid j$, existence of $\chi_{n,1}(G(m,j))$ is not currently known.
        \item If $8\mid n$, $8\mid m$, and $2\mid j$, existence of $\chi_{n,1}(G(m,j))$ is not currently known.
    \end{enumerate}
\end{theorem}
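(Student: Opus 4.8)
The plan is to recast the problem as a system of linear congruences over $\Z_n$, reduce it to the $2$-power part of $n$ via the Chinese Remainder Theorem, and then treat the existence items by explicit periodic colorings and the non-existence items by weighted sums of the defining equations, in the spirit of Theorem~\ref{thm: regular chi k n}. Concretely, writing $a_i=\ell(v_i)$ and $b_i=\ell(u_i)$, the edge set of $G(m,j)$ shows that a closed coloring with remainder $1\mod n$ is exactly a solution in $\Z_n$ of
\[ a_{i-1}+a_i+a_{i+1}+b_i\equiv 1,\qquad b_{i-j}+b_i+b_{i+j}+a_i\equiv 1\qquad(i\in\Z_m). \]
Since $G(m,j)$ is $3$-regular, $\chi(G(m,j))\le 4$, so by Theorem~\ref{thm: remainder k mod n existence criteria and upper bound by n chi} the number $\chi_{n,1}(G(m,j))$ exists if and only if this system is solvable; by the Chinese Remainder Theorem this is equivalent to solvability modulo every prime power dividing $n$, and for odd primes $p$ the constant labeling with value a solution of $4c\equiv 1\mod p^{e}$ always works, so the whole question reduces to solvability modulo $2^{v_2(n)}$. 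I would further eliminate the $b_i$: in $\Z_{2^{e}}[x]/(x^m-1)$, with $p=x+1+x^{-1}$, $q=x^j+1+x^{-j}$ and $\mathbf 1=\sum_{i\in\Z_m}x^i$, the system is $pA+B=\mathbf 1$, $A+qB=\mathbf 1$, hence $\pi A=2\,\mathbf 1$ with $\pi:=pq-1$; so the theorem amounts to asking, for each $(e,m,j)$ with $e=v_2(n)$, whether $2\,\mathbf 1\in\pi\cdot\Z_{2^{e}}[x]/(x^m-1)$, and one notes $\pi\,\mathbf 1=(p(1)q(1)-1)\mathbf 1=8\,\mathbf 1$.

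For the existence items I would exhibit periodic colorings. For (1) ($4\nmid n$, i.e.\ $e\le 1$) combine the odd-part constant labeling with the mod-$2$ labeling $a_i\equiv 0$, $b_i\equiv 1$ via CRT. For (4) ($e=2$, $m$ and $j$ even) the period-$2$ labeling $a_i=0$, $b_i=3$ on even indices and $a_i=1$, $b_i=0$ on odd indices solves the system modulo $4$ (here $2\mid m$ makes a period-$2$ pattern legitimate and $2\mid j$ keeps the interior edges within a parity class). For the backward direction of (3) ($4\mid m$, $j$ odd) I would observe that on period-$4$ colorings $q$ acts as $p$ and $x^{2}=x^{-2}$, so the defining equation collapses modulo $4$ to $(x^{2}+x+1+x^{-1})A\equiv\mathbf 1\mod 2$, which is easily solved (e.g.\ with $A$ supported on one residue class mod $4$); $4\mid m$ is exactly what makes a period-$4$ pattern legitimate.

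For the non-existence items I would use weighted sums of the defining equations: if $w\in\Z[x]/(x^m-1)$ satisfies $w\pi\equiv 0\mod 2^{e}$, then multiplying $\pi A=2\,\mathbf 1$ by $w$ and reading off one coefficient forces $2\,w(1)\equiv 0\mod 2^{e}$, where $w$ is just a $\Z$-combination of the defining equations all of whose variable-coefficients are divisible by $2^{e}$. Taking $w=\mathbf 1$ gives $w\pi=8\,\mathbf 1$, so after reducing modulo $4$ (for (2)) or modulo $8$ (for (5)) one obtains $2\mid m$, respectively $4\mid m$, as a necessary condition, which is exactly the claim in items (2) and (5). Item (3)'s forward direction ($j$ odd, $m\equiv 2\bmod 4$) is not reached by $w=\mathbf 1$, and for item (6) ($16\mid n$, $m\equiv 4\bmod 8$, $j$ even) even $8\,\mathbf 1\not\equiv 0\bmod 16$; for these I would look for a weight $w$ periodic of period $4$ (and, for (3), adapted to the odd $j$), chosen so that $w\pi$ vanishes modulo $2^{e}$ while $2^{e-1}\nmid w(1)$. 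Finally, items (7) and (8) only record that the answer is not currently known, so nothing is proved there; they are precisely the range $8\mid n$, $2\mid j$, $4\mid m$ (with $16\nmid n$, or $8\mid m$) where the period-$4$ weight no longer forces a contradiction and no periodic coloring of the above type is available.

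I expect the hard part to be the fine $2$-adic bookkeeping when $j$ is even and $4\mid m$: one must compute the coefficient of $2^{e}$ exactly, both to produce the obstructing weight $w$ in item (6) (and the analogous weight for the forward direction of (3)) and to confirm that the period-$2$ and period-$4$ ansätze genuinely fail past modulus $4$. This borderline analysis is also exactly what separates the resolved cases from the open cases (7) and (8).
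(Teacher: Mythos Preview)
Your reformulation is correct and clean: eliminating $B$ via $B=\mathbf 1-pA$ does reduce the system to $\pi A=2\mathbf 1$ in $\Z_{2^e}[x]/(x^m-1)$, and by duality in the group algebra non-existence is always witnessed by some $w$ with $w\pi\equiv 0$ and $2w(1)\not\equiv 0$. Your explicit colorings for (1), (4), and the backward half of (3) all check out; in fact for (3) the choice $A=f_0$ (``supported on one residue class mod $4$'') gives $\mu f_0=\mathbf 1$ over $\Z$, not merely mod $2$, so $B=f_2$ and you recover exactly the paper's labeling. The weight $w=\mathbf 1$ disposes of (2) and (5) as you say. In spirit this is the paper's method (its sums $V,U,V^\pi,U^\pi$ are precisely evaluations against $\mathbf 1,e_0,e_1$), just packaged in the group ring after eliminating one set of variables.

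The genuine gap is in the forward half of (3) and in (6), which you leave at ``I would look for a weight $w$ periodic of period $4$.'' These \emph{are} completable by your scheme, but the required weight is period $2$, not $4$. Write $e_0,e_1$ for the parity indicators; since $\pi$ has four even-exponent and four odd-exponent monomials when $j$ is odd, one gets $e_0\pi=4\mathbf 1$, so already $w=e_0$ gives $w\pi\equiv 0\bmod 4$ and $2w(1)=m\not\equiv 0\bmod 4$ when $4\nmid m$: this is the missing direction of (3). When $j$ is even, $\pi$ has two even and six odd exponents, so $e_0\pi=2e_0+6e_1$ and $e_1\pi=6e_0+2e_1$; taking $w=3e_0-e_1$ yields $w\pi=16e_1\equiv 0\bmod 16$ while $2w(1)=2m\equiv 8\bmod 16$ whenever $8\nmid m$: this is (6). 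The paper reaches the same obstructions by carrying the $V^\pi,U^\pi$ sums through a chain of substitutions with half-integer ambiguities $\delta\cdot n/2$ (Lemmas~\ref{lem: k=1, (4,n)=4, m even, j odd, petersen nk's} and~\ref{lem: k=1, (4,n)=4, m even, 4 not divide m, petersen nk's}); your $\pi$-elimination reaches them more directly once the period-$2$ weights are written down.
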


The proof of Theorem \ref{lem: k=1 main lem for petersen n,k's} will follow from Lemmas \ref{lem: k=1 and (4,n)=1,2 for petersen n,k's}, \ref{lem: k=1 and (4,n)=4 nothing for m odd, petersen n,k's}, \ref{lem: k=1, (4,n)=4, m even, j odd, petersen nk's}, and \ref{lem: k=1, (4,n)=4, m even, 4 not divide m, petersen nk's} below.
Figure \ref{fig: k=1 Petersen pic} gives a visual representation of the existence and nonexistence of $\chi_{n,1}(G(m,j))$ from Theorem \ref{lem: k=1 main lem for petersen n,k's}.

    \begin{figure}[H]
        \centering
        \begin{tikzpicture}[row sep=1cm, column sep=1cm]
          
          \fill[lightgray] (2.5, .5) rectangle (5, 4.5);
          \fill[lightgray] (5, .5) rectangle (7.5, 2.5);
          \fill[lightgray] (5, 2.5) -- (7.5, 2.5) -- (7.5, 3.5) -- cycle;
          \fill[lightgray] (5, 2.5) -- (7.5, 2.5) -- (7.5, 3.5) -- cycle;
          \fill[lightgray] (5, 2.5) -- (7.5, 2.5) -- (7.5, 3.5) -- cycle;
          \fill[lightgray] (7.5, .5) -- (7.5, 1.5) -- (10, 1.5) -- cycle;
          \fill[lightgray] (7.5, 1.5) -- (7.5, 2.5) -- (10, 2.5) -- cycle;
          
          \fill[pattern=dots] (7.5, 2.5) rectangle (10, 3.5);
          \fill[pattern=dots] (5, 3.5) rectangle (10, 4.5);
          \fill[pattern=dots] (5, 2.5) -- (5, 3.5) -- (7.5, 3.5) -- cycle;
          \fill[pattern=dots] (8.9, 1.5) -- (8.9, 2) -- (10, 2.5) -- (10, 1.5) -- cycle;

          \draw (-.5, .5) -- (-.5, 4.5);
          
          \draw (2.5, .5) -- (2.5, 5.5);
          \draw (5, .5) -- (5, 5.5);
          \draw (7.5, .5) -- (7.5, 5.5);
          \draw (10, .5) -- (10, 5.5);

          \draw (-.5, .5) -- (10, .5);
          \draw (-.5, 1.5) -- (10, 1.5);
          \draw (-.5, 2.5) -- (10, 2.5);
          \draw (-.5, 3.5) -- (10, 3.5);
          \draw (-.5, 4.5) -- (10, 4.5);

          \draw (2.5, 5.5) -- (10, 5.5);

          \draw (5, 2.5) -- (7.5, 3.5); 
          \draw (7.5, 1.5) -- (10, 2.5); 
          \draw (7.5, .5) -- (10, 1.5);
          
          \node at (1,4) {$2\nmid m$};
          \node at (1,3) {$2\mid m$ but $4\nmid m$};
          \node at (1,2) {$4\mid m$ but $8\nmid m$};
          \node at (1,1) {$8\mid m$};
          
          \node at (3.75,5) {$4\nmid n$};
          \node at (6.25,5) {$4\mid n$ but $8\nmid n$};
          \node at (8.75,5) {$8\mid n$};
          \node at (9.45, 1.85) {$16 \mid n$};
          
          \node at (9.5, .85) {?};
          \node at (8.65, 1.72) {?};
          
        \end{tikzpicture}
        \caption{Diagonals: $2\nmid j$ Northwest, $2 \mid j$ Southeast\\
                Shaded Regions: $\chi_{n,1}(G(m,j))$ exists. \\
                Dotted Regions: $\chi_{n,1}(G(m,j))$ does not exist.}
        \label{fig: k=1 Petersen pic}
    \end{figure}

\begin{lemma}\label{lem: k=1 and (4,n)=1,2 for petersen n,k's}
    Let $n,m,j\in \Z^+$. 
    If $4\nmid n$, then there exists a closed coloring of $G(m,j)$ with remainder $1\mod n$.
\end{lemma}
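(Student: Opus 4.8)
The plan is to produce the closed coloring explicitly, taking it to be constant on the outer cycle and constant on the inner vertices. Set $\ell(v_i)=a$ and $\ell(u_i)=b$ for all $i\in\Z_m$, where $a,b\in\Z$ are to be chosen. Since $G(m,j)$ is $3$-regular with $N[v_i]=\{v_{i-1},v_i,v_{i+1},u_i\}$ and $N[u_i]=\{u_{i-j},u_i,u_{i+j},v_i\}$ — and in each case these four vertices are distinct, using $m\ge 3$ and $0<2j<m$ — the closed-neighborhood label sum equals $3a+b$ at every outer vertex and $3b+a$ at every inner vertex. So it suffices to find $a,b\in\Z$ with
\[ 3a+b\equiv 1\pmod n \qquad\text{and}\qquad a+3b\equiv 1\pmod n. \]

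The second step is to solve this $2\times 2$ congruence system. The coefficient matrix $\left(\begin{smallmatrix}3&1\\1&3\end{smallmatrix}\right)$ has determinant $8$, hence is invertible modulo every odd integer. Write $n=2^e q$ with $q$ odd and $e\in\{0,1\}$; this is exactly where the hypothesis $4\nmid n$ enters. Modulo $q$ the system has a (unique) solution because $8$ is a unit mod $q$. If $e=0$ we are done. If $e=1$, I also solve the system modulo $2$, where it collapses to the single equation $a+b\equiv 1\pmod 2$, solved for instance by $(a,b)\equiv(0,1)\pmod 2$. The Chinese Remainder Theorem then assembles these into integers $a,b$ satisfying both congruences mod $n$, and the labeling $v_i\mapsto a$, $u_i\mapsto b$ is the desired closed coloring with remainder $1\bmod n$.

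There is essentially no obstacle: the whole point is that the two-value coloring reduces the problem to congruences whose solvability is governed by $\gcd(8,n)$, and $8$ becomes non-invertible only when $4\mid n$, the residual factor of $2$ being harmlessly absorbed by the CRT step. The only case requiring anything beyond a \emph{constant} coloring is $n\equiv 2\pmod 4$; for odd $n$ one may instead quote Theorem~\ref{thm: regular chi k n} directly, since then $(3+1,n)=(4,n)=1\mid 1$. I would close by noting that the coloring constructed need not be proper, which is irrelevant here: the lemma only claims a closed coloring exists, and this combines with Theorem~\ref{thm: remainder k mod n existence criteria and upper bound by n chi} (as $\chi(G(m,j))$ is finite) to give a proper one, hence the existence of $\chi_{n,1}(G(m,j))$.
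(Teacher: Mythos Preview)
Your proof is correct and follows essentially the same approach as the paper: both reduce to a two-value labeling (constant $a$ on the outer cycle, constant $b$ on the inner vertices) and solve the resulting system $3a+b\equiv 1$, $a+3b\equiv 1\pmod n$. The only difference is cosmetic---the paper handles the case $(4,n)=2$ by the explicit ansatz $b=a+\tfrac{n}{2}$ rather than your CRT decomposition, but the content is the same.
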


\begin{proof}
    If $(4,n)=1$, it is possible to solve the equation $4\alpha\equiv 1\mod n$ for some $\alpha\in\Z$. In that case, the constant labeling of $\alpha$ gives rise to the existence of $\chi_{n,1}(G(m,j))$, and $\chi_{n,1}(G(m,j))=\chi(G(m,j))$.
    
    We turn to the case of $(4,n)=2$, where we will demonstrate the existence of a closed coloring that is constant on the exterior vertices and constant on the interior vertices. For this, we must be able to solve the equations
    \[3\alpha + \beta \equiv 1 \mod n, \]
    \[\alpha + 3\beta \equiv 1 \mod n\ \]
    for some $\alpha, \beta \in \Z$.

    It is straightforward to see that these equations require that $2\alpha \equiv 2\beta \mod n$. In fact, we will take
    \[ \beta = \alpha + \frac{n}{2}\,.\]
    With this ansatz, solving the desired equations is equivalent to solving
    \[ 4\alpha\equiv \left( 1 + \frac{n}{2} \right) \mod n.\]
    In turn, this has a solution if and only if $(4,n)\mid(1+\frac{n}{2})$. However, as $(4,n)=2$ and $\frac{n}{2}$ is odd, we are done.
\end{proof}

\begin{lemma}\label{lem: k=1 and (4,n)=4 nothing for m odd, petersen n,k's}
    Let $n,m,j\in \Z^+$ and suppose $4\mid n$. If $\chi_{n,1}(G(m,j))$ exists, then $m$ is even.
\end{lemma}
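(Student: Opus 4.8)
The plan is to deduce this directly from the non-existence half of Theorem~\ref{thm: regular chi k n}. First I would record the two structural facts about $G(m,j)$ that feed into that theorem: it is $3$-regular, and its vertex set $\{v_i,u_i \mid i\in\Z_m\}$ has exactly $2m$ elements. Applying Theorem~\ref{thm: regular chi k n} with regularity parameter $3$, remainder $1$, and $|V|=2m$, one gets that $\chi_{n,1}(G(m,j))$ fails to exist whenever $(3+1,n)=(4,n)$ does not divide $1\cdot 2m = 2m$. Under the hypothesis $4\mid n$ we have $(4,n)=4$, so this says precisely: if $4\nmid 2m$, i.e.\ if $m$ is odd, then $\chi_{n,1}(G(m,j))$ does not exist. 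The contrapositive is exactly the assertion of the lemma.

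For self-containedness I would also be prepared to write out the underlying double count in this setting, since it is short. Suppose $\ell$ is a closed coloring of $G(m,j)$ with remainder $1\mod n$, and set $A=\sum_{i\in\Z_m}\ell(v_i)$ and $B=\sum_{i\in\Z_m}\ell(u_i)$. Summing the congruence $\sum_{w\in N[v_i]}\ell(w)\equiv 1\mod n$ over all $i\in\Z_m$, and using that each exterior vertex lies in exactly three exterior closed neighborhoods while each interior vertex lies in exactly one, gives $3A+B\equiv m\mod n$. Summing $\sum_{w\in N[u_i]}\ell(w)\equiv 1\mod n$ over all $i$ gives, symmetrically, $A+3B\equiv m\mod n$. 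Adding the two congruences yields $4(A+B)\equiv 2m\mod n$; since $4\mid n$ this forces $2m\equiv 0\mod 4$, hence $m$ is even.

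There is no real obstacle here; the only points needing (routine) care are the combinatorial bookkeeping underlying both arguments — confirming the degree and vertex count, and checking that the conventions $m\geq 3$, $1\leq j<\frac m2$ make $N[u_i]=\{u_{i-[j]},u_i,u_{i+[j]},v_i\}$ a set of four distinct vertices (this uses $2j\not\equiv 0\mod m$) and likewise $N[v_i]=\{v_{i-[1]},v_i,v_{i+[1]},u_i\}$. After that the arithmetic is immediate. This lemma is being isolated here mainly because it supplies the ``$4\mid n$, $m$ odd'' branch of the eventual classification in Theorem~\ref{lem: k=1 main lem for petersen n,k's}.
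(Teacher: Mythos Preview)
Your proposal is correct, and your self-contained double count with $A,B$ is exactly the paper's argument (the paper uses the letters $V,U$ but otherwise proceeds identically: $3V+U\equiv m$, $V+3U\equiv m$, add to get $4(V+U)\equiv 2m\mod n$, hence $m$ even). Your additional remark that this is simply an instance of Theorem~\ref{thm: regular chi k n} with $j=3$, $k=1$, $|V|=2m$ is a tidy observation the paper does not make explicit, though of course the proof of that theorem is the same double count in disguise.
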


\begin{proof}
     Suppose $\ell$ is a closed coloring with remainder $1\mod n$. Define
     \[ V = \sum_{i\in\Z_m} \ell(v_i) \text{ and } 
        U = \sum_{i\in\Z_m} \ell(u_i). \]
    Then 
     \[ m = \sum_{i\in\Z_m}1 \equiv 
        \sum_{i\in\Z_m} \sum_{u\in N[v_i]}\ell(u) = (3V+U) \mod n \quad \mbox{ and }  \] 
     \[ m= \sum_{i\in\Z_m}1 \equiv 
        \sum_{i\in\Z_m} \sum_{u\in N[u_i]}\ell(u) = (V+3U) \mod n. \qquad\ \ \ \mbox{} \]
     This implies 
     $2m \equiv (4V+4U) \mod n$,
    which forces $m$ to be even.
\end{proof}

\begin{lemma}\label{lem: k=1, (4,n)=4, m even, j odd, petersen nk's}
    Let $n,m,j\in \Z^+$ and suppose $4\mid n$, $2\mid m$, and $2\nmid j$. Then $\chi_{n,1}(G(m,j))$ exists if and only if $4\mid m$. 
\end{lemma}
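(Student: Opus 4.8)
The plan is to prove the two implications separately. Throughout, write $a_i := \ell(v_i)$ and $b_i := \ell(u_i)$ for a candidate closed coloring $\ell$ of $G(m,j)$ with remainder $1 \mod n$. Since $N[v_i] = \{v_{i-1},v_i,v_{i+1},u_i\}$ and $N[u_i] = \{u_{i-j},u_i,u_{i+j},v_i\}$, being a closed coloring with remainder $1\mod n$ amounts to requiring, for every $i \in \Z_m$,
\begin{align*}
a_{i-1}+a_i+a_{i+1}+b_i &\equiv 1 \mod n,\\
b_{i-j}+b_i+b_{i+j}+a_i &\equiv 1 \mod n.
\end{align*}
By Lemma \ref{lem: k=1 and (4,n)=4 nothing for m odd, petersen n,k's} existence already forces $m$ even, so the content is to upgrade this to $4 \mid m$.

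For the forward direction I would first eliminate the interior labels: the first family of congruences gives $b_i \equiv 1 - a_{i-1}-a_i-a_{i+1} \mod n$, and substituting this into the second family and cancelling an $a_i$ leaves the purely exterior relation
\[ a_{i-j-1}+a_{i-j}+a_{i-j+1}+a_{i-1}+a_{i+1}+a_{i+j-1}+a_{i+j}+a_{i+j+1} \equiv 2 \mod n \]
for every $i$. The eight index shifts occurring here are $\pm(j+1)$ and $\pm(j-1)$, which are \emph{even} since $j$ is odd, together with $\pm j$ and $\pm 1$, which are odd. The crucial move is then to sum this relation only over the $m/2$ \emph{even-indexed} $i \in \Z_m$ (legitimate because $m$ is even): as $i$ runs over the even residues, each even shift sends them bijectively onto the even residues and each odd shift bijectively onto the odd residues, so every $a_k$ gets counted exactly four times. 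This yields $4\sum_{k\in\Z_m} a_k \equiv 2\cdot\frac m2 = m \mod n$, and since $4 \mid n$ we conclude $4 \mid m$. I expect this parity-restricted summation to be the one genuinely clever point and the main obstacle to see: summing the relation over \emph{all} vertices only gives $8\sum_k a_k \equiv 2m \mod n$, hence merely $2\mid m$; it is the refinement to a single parity class, together with the fact that oddness of $j$ is exactly what makes the count of each $a_k$ collapse to $4$, that produces the stronger divisibility.

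For the reverse direction I would simply exhibit a closed coloring. Assuming $4 \mid m$, the labeling $\ell(v_i) = 1$ if $i \equiv 0 \mod 4$ and $\ell(v_i) = 0$ otherwise, $\ell(u_i) = 1$ if $i \equiv 2 \mod 4$ and $\ell(u_i) = 0$ otherwise, is well defined on $\Z_m$. A short check by residue of $i$ modulo $4$ shows $N[v_i] = \{v_{i-1},v_i,v_{i+1},u_i\}$ always contains exactly one vertex labeled $1$; and since $j$ is odd we have $j \equiv \pm 1 \mod 4$, so $\{(i-j)\bmod 4,\,(i+j)\bmod 4\} = \{i-1,\,i+1\}\bmod 4$, whence the same holds for $N[u_i] = \{u_{i-j},u_i,u_{i+j},v_i\}$. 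Thus every closed neighborhood sum equals $1 \equiv 1 \mod n$, so a closed coloring with remainder $1\mod n$ exists; since $\chi(G(m,j))$ is finite, Theorem \ref{thm: remainder k mod n existence criteria and upper bound by n chi} then yields a proper one, so $\chi_{n,1}(G(m,j))$ exists. (Equivalently, the set of vertices labeled $1$ is an IEDS of $G(m,j)$, and Lemma \ref{lem: Colorings given IEDS} applies directly.) The only delicate bookkeeping anywhere is tracking the eight shifted indices and their parities in the forward direction, so that is where I would be most careful.
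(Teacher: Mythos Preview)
Your proof is correct and follows essentially the same approach as the paper: both directions match, with the explicit $\{0,1\}$-labeling identical to the paper's construction, and the forward direction resting on the same parity-restricted summation exploiting that $j$ is odd. The only cosmetic difference is order of operations---you eliminate the interior labels $b_i$ first and then sum over even $i$, whereas the paper sums the closed-neighborhood constraints over parity classes (via $V^\pi,U^\pi$) and then combines; the resulting identity $4\sum_k a_k \equiv m \mod n$ is the same.
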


\begin{proof}
    Suppose first that $\chi_{n,1}(G(m,j))$ exists. 
    Proceed with a refinement of the proof of Lemma \ref{lem: k=1 and (4,n)=4 nothing for m odd, petersen n,k's} in which $V$ and $U$ are broken into their even and odd parts. For $\pi\in\{[0],[1]\}\subseteq \Z_m$, viewed rather as an element of $\Z_2$ whenever used in superscripts, define
     \[ V^\pi=\sum_{i\in \, 2\Z_m +\pi} \ell(v_i) \text{ and } 
        U^\pi=\sum_{i\in \, 2\Z_m +\pi} \ell(u_i).  \]
     Then we get the equations, using $2\nmid j$ in the second set below,
     \[ \frac{m}{2} = \sum_{i\in \, 2\Z_m +\pi}1 \equiv 
        \sum_{i\in \, 2\Z_m +\pi} \sum_{\, u\in N[v_i]}\ell(u) = (V^\pi + 2V^{\pi+[1]} + U^\pi) \mod n \quad \mbox{and}  \] 
     \[ \frac{m}{2} = \sum_{i\in \, 2\Z_m +\pi}1 \equiv 
        \sum_{i\in \, 2\Z_m +\pi} \sum_{\, u\in N[u_i]}\ell(u) = (V^\pi + U^\pi + 2U^{\pi+[1]}) \mod n. \qquad\ \ \ \mbox{} \]

    Subtracting yields $2V^\pi \equiv 2U^\pi \mod n$ 
    while adding gives \[m\equiv 2V^\pi + 2V^{\pi+[1]}+ 2U^\pi+ 2U^{\pi+[1]} \equiv (4V^\pi + 4V^{\pi+[1]}) \mod n,\]
    which forces $4\mid m$.

    Now suppose $4\mid m$ and define a $\Z$-labeling $\ell$ of $G(m,j)$ by
    \[ \ell(v_i)= \begin{cases}
        1 & \text{if } 4\mid i,\\
        0 & \text{else},
    \end{cases} \text{  and  } \ell(u_i)=\begin{cases}
        1 & \text{if }  4\mid (i-[2]),\\ 
        0 & \text{else}.
    \end{cases}\]
    It is straightforward to verify that this gives a closed coloring with remainder $1\mod n$.
\end{proof}

\begin{lemma}\label{lem: k=1, (4,n)=4, m even, 4 not divide m, petersen nk's}
    Let $n,m,j\in \Z^+$ and suppose $4\mid n$, $2\mid m$, and $2\mid j$. Then existence of $\chi_{n,1}(G(m,j))$  is determined as follows:
    \begin{itemize}
        \item If $8 \nmid n$, then $\chi_{n,1}(G(m,j))$ exists.
        \item If $8 \mid n$ and $4 \nmid m$, then $\chi_{n,1}(G(m,j))$ does not exist.
        \item If $16 \mid n$, $4\mid m$, and $8 \nmid m$, then $\chi_{n,1}(G(m,j))$ does not exist.
        \item For $8 \mid n$, $16\nmid n$, and $4 \mid m$, the existence of $\chi_{n,1}(G(m,j))$ is not currently known.
        \item For $8 \mid n$ and $8 \mid m$, the existence of $\chi_{n,1}(G(m,j))$ is not currently known.
    \end{itemize}
\end{lemma}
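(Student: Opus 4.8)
The plan is to split the five cases into one existence construction (first bullet), a counting/averaging obstruction (second and third bullets), and no argument at all for the last two, which are genuinely open. Throughout I abbreviate $\ell(v_i)$ and $\ell(u_i)$ by $v_i$ and $u_i$, so that a closed coloring with remainder $1 \bmod n$ is a $\Z$-labeling of $G(m,j)$ satisfying $v_{i-1}+v_i+v_{i+1}+u_i \equiv 1 \pmod n$ and $u_{i-j}+u_i+u_{i+j}+v_i \equiv 1 \pmod n$ for every $i \in \Z_m$. By Theorem~\ref{thm: remainder k mod n existence criteria and upper bound by n chi} together with $\chi(G(m,j))\le 4$, it suffices in each case to decide whether such a labeling exists.

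For the first bullet, write $n = 4q$ with $q$ odd (using $4\mid n$ and $8\nmid n$) and invoke the Chinese Remainder Theorem: it is enough to produce one $\Z$-labeling whose closed neighborhood sums are all $\equiv 1\pmod 4$ and another whose closed neighborhood sums are all $\equiv 1\pmod q$, then splice them residue-by-residue. Modulo $q$ the constant labeling by a multiplicative inverse of $4$ works, since each closed neighborhood has four vertices. Modulo $4$, since $2\mid m$ the parity of $i\in\Z_m$ is well defined, and since $2\mid j$ the interior neighbors $u_{i\pm j}$ of $u_i$ share the parity of $i$; one then checks directly that setting $v_i\equiv 1,2$ and $u_i\equiv 0,1$ according as $i$ is even or odd makes every closed sum $\equiv 1 \pmod 4$. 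Combining the two labelings via CRT yields a closed coloring with remainder $1\bmod n$, which settles the first bullet.

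For the second and third bullets I would refine the averaging argument of Lemma~\ref{lem: k=1 and (4,n)=4 nothing for m odd, petersen n,k's}, summing each of the two families of defining congruences not over all of $\Z_m$ but over each residue class modulo a power of $2$. For the second bullet, using $2\mid m$, introduce $V^\pi = \sum_{i\equiv\pi\ (2)} v_i$ and $U^\pi=\sum_{i\equiv\pi\ (2)} u_i$; summing the $v$-congruence and the $u$-congruence over the class $\pi$ and using $2\mid j$ gives $V^\pi + 2V^{\pi+1} + U^\pi \equiv m/2$ and $3U^\pi + V^\pi \equiv m/2 \pmod n$, whence eliminating $V^\pi$ and adding the $\pi=0$ and $\pi=1$ identities yields $8(U^0+U^1)\equiv 2m\pmod n$; since $8\mid n$ this forces $4\mid m$, contradicting $4\nmid m$. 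For the third bullet, $4\mid m$ lets us refine to the four classes modulo $4$, with analogous sums $V^r,U^r$ for $r\in\Z_4$; summing the interior congruences now depends on whether $j\equiv 0$ or $j\equiv 2\pmod 4$, so a short case split enters. In either case, after eliminating the $V^r$, one extracts $U^0\equiv U^2$ and $U^1\equiv U^3 \pmod{n/2}$, and then, writing $m=4m'$ with $m'$ odd and $U^2\equiv U^0+\delta_1(n/2)$, $U^3\equiv U^1+\delta_2(n/2)\pmod n$ with $\delta_i\in\{0,1\}$, one arrives at
\[ 8U^0 + 8U^1 + (\delta_1+\delta_2)\tfrac{n}{2} \equiv 4m' \pmod n. \]
Since $16\mid n$ gives $\tfrac{n}{2}\equiv 0\pmod 8$, reducing this identity modulo $8$ yields $0\equiv 4m'\pmod 8$, impossible for odd $m'$.

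The last two bullets are left unproved on purpose. The construction for the first bullet genuinely uses $8\nmid n$ (it is exactly what makes $4$ invertible after dividing out a factor of $4$), while each nonexistence argument consumes a factor of $16$ (more precisely, of $2^{v_2(m)+2}$) to annihilate the residual $\tfrac{n}{2}$-terms; in the ranges $8\mid n$, $16\nmid n$, $4\mid m$ and $8\mid n$, $8\mid m$ neither technique applies, and we simply record these as open. The main obstacle is the bookkeeping in the third bullet: one must carry the congruences modulo $n$ rather than reducing early modulo $n/2$, because the residual $\pm\tfrac{n}{2}$ terms coming from $U^0\equiv U^2$ and $U^1\equiv U^3\pmod{n/2}$ are precisely what survives into the final mod-$8$ computation, and one must verify that the two subcases $j\equiv 0$ and $j\equiv 2\pmod 4$ collapse to the same identity displayed above.
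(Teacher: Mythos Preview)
Your argument is correct, but it is organized differently from the paper's. For existence when $8\nmid n$, the paper uses the same parity ansatz $a_\pi,b_\pi$ as its obstruction, replacing $m/2$ by $1$ in the very equations it already derived; your CRT splitting into a mod-$4$ labeling and a mod-$q$ constant is an independent construction that sidesteps that machinery entirely. For nonexistence, the paper stays at the mod-$2$ refinement $V^\pi,U^\pi$ throughout: it tracks the half-lifts $U^\pi\equiv V^{\pi+[1]}+\delta_\pi\tfrac{n}{2}$ and $V^{[1]}\equiv V^{[0]}+(\delta_{[1]}-\delta_{[0]})\tfrac{n}{4}+\delta\tfrac{n}{2}$ to reach the single condition $4\mid\bigl[\tfrac{m}{2}+(\delta_{[0]}+\delta_{[1]})\tfrac{n}{4}+\delta\tfrac{n}{2}\bigr]$, from which both the second and third bullets fall out by reading off the $2$-adic valuation of $\tfrac{n}{4}$. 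You instead handle the second bullet by a cleaner $\delta$-free identity $8(U^0+U^1)\equiv 2m$, but then pay for that simplicity by needing a genuine mod-$4$ refinement $V^r,U^r$ and a case split on $j\bmod 4$ for the third bullet. The trade-off is that the paper's approach is more unified (one computation serves all three bullets), while yours keeps each step lighter but requires an extra layer of refinement; both reach the same open residual cases for the same structural reason.
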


\begin{proof}
    We use the notation of $V^\pi$ and $U^\pi$ from Lemma \ref{lem: k=1, (4,n)=4, m even, j odd, petersen nk's} above and suppose that $\chi_{n,1}(G(m,j))$ exists. Since $2\mid j$, we now get 
    \begin{align}
    \label{eq7.1} \frac{m}{2} =\!\! \sum_{i\in \, 2\Z_m +\pi}\!\! 1 \equiv \!\!
        \sum_{i\in \, 2\Z_m +\pi} \sum_{\, u\in N[v_i]}\!\! \ell(u) = (V^\pi + 2V^{\pi+[1]} + U^\pi) \mod n\\
    \nonumber  \mbox{ and } \,\, \frac{m}{2} = \sum_{i\in \, 2\Z_m +\pi}1 \equiv 
        \sum_{i\in \, 2\Z_m +\pi} \sum_{\, u\in N[u_i]}\ell(u) = (V^\pi + 3U^\pi) \mod n.
    \end{align}

    Subtracting gives $2V^{\pi+[1]}\equiv 2U^\pi \mod n$, so that 
    \[U^\pi \equiv \left(V^{\pi+[1]} +\delta_\pi\frac{n}{2}\right) \mod n\]
    for some $\delta_\pi \in \{0,1\}$.
    Substituting this back into our two initial equations, both equations reduce to 
    \begin{align}
    \frac{m}{2} \equiv \left(V^\pi + 3V^{\pi+[1]} +\delta_{\pi}\frac{n}{2}\right) \mod n \label{eq7.2}
    \end{align}
    for $\pi \in \{[0],[1]\}$.
    In particular,
    \[\left(V^{[0]} + 3V^{[1]} +\delta_{[0]}\frac{n}{2}\right) \equiv \left(V^{[1]} + 3V^{[0]} +\delta_{[1]}\frac{n}{2}\right) \mod n,\]
    hence $2V^{[1]} \equiv (2V^{[0]} +(\delta_{[1]}-\delta_{[0]})\frac{n}{2}) \mod n$.
    As a result, we must have
    $V^{[1]} \equiv (V^{[0]} +(\delta_{[1]}-\delta_{[0]})\frac{n}{4} + \delta\frac{n}{2}) \mod n$ for some $\delta\in\{0,1\}$.
    Substituting back into Equation \eqref{eq7.2}, we end up with the requirement
    \[ \frac{m}{2} \equiv \left(4V^{[0]} -(\delta_{[1]}+\delta_{[0]})\frac{n}{4} + \delta \frac{n}{2}\right) \mod n.\]
    
    In turn, this necessitates 
    \[4\mid \Bigl[\frac{m}{2} + (\delta_{[1]}+\delta_{[0]})\frac{n}{4} + \delta \frac{n}{2}\Bigr].\]
    For $\frac{n}{4}\equiv 0\mod 4$ this requires $\frac{m}{2}\equiv 0\mod 4$, and for $\frac{n}{4}\equiv 2\mod 4$ this requires either $\frac{m}{2}\equiv 0\mod 4$ or $\frac{m}{2}\equiv 2\mod 4$. 
    In summary, existence of a closed coloring with remainder $1\mod n$ fails in the following cases:
    \begin{itemize}
        \item $16\mid n$ and $8 \nmid m$,
        \item $8\mid n$, $16\nmid n$, and $4 \nmid m$.
    \end{itemize}

    To examine the existence of closed colorings with remainder $1\mod n$,
    look for one that 
    is constant on each of the sets $\{v_{2i+\pi} \mid i \in \Z_m\}$ and $\{u_{2i+\pi}\mid i \in \Z_m\}$. Write the labels as $a_\pi$ and $b_\pi$, respectively. Then a closed coloring with remainder 
    $1\mod n$ of this form exists if and only if
    \begin{align*}
    1 \equiv (a_\pi + 2a_{\pi+[1]} + b_\pi) \mod n & \quad \mbox{ and }\\
    1 \equiv (a_\pi + 3b_\pi) \mod n &
    \end{align*}
    can be solved, which is Equations \eqref{eq7.1} with $\frac{m}{2}, V^\pi, U^\pi$ replaced by $1, a_\pi, b_\pi$. As seen above, this can be done if and only if
    \[ 1 \equiv \left(4a_{[0]} -(\delta_{[1]}+\delta_{[0]})\frac{n}{4} + \delta \frac{n}{2}\right) \mod n\]
    for some $\delta_{[1]},\delta_{[0]},\delta \in \{0,1\}$, which can be achieved if and only if
    \[4\mid \Bigl[1 + (\delta_{[1]}+\delta_{[0]})\frac{n}{4} + \delta \frac{n}{2}\Bigr].\]
    In turn, this can be done if and only if $8\nmid n$.
    
    However, when $8\mid n$, the question of existence remains open. Though the above labeling scheme fails, more exotic labeling methods may be possible in some cases. This leaves us with the open cases $8\mid n$, $16\nmid n$, $4\mid m$ and $8\mid n$, $8\mid m$.
\end{proof}

Now we move on to the case $k=2$, which we only need to consider in the cases when $\chi_{n,k}(G(m,j))$ does not exist for $k=1$. By Theorem~\ref{lem: k=1 main lem for petersen n,k's}, this happens when 
\begin{enumerate}
    \item $4\mid n, 2\nmid m$,
    \item $4\mid n, 8\nmid n, 2\mid m, 4\nmid m, 2\nmid j$,
    \item $8\mid n, 2\mid m, 4\nmid m$,
    \item $16\mid n, 4\mid m, 8\nmid m, 2\mid j$,
    \item possible subcases of $8\mid n, 16\nmid n, 4\mid m, 8\nmid m, 2\mid j$,
    \item possible subcases of $8\mid n, 8\mid m, 2\mid j$.
\end{enumerate}
Our collected findings are as follows:

\begin{theorem}\label{thm: k=2 summary}
    Let $n,m,j\in\Z^+$.
    \begin{enumerate}
        \item If $8\nmid n$, then $\chi_{n,2}(G(m,j))$ exists.
        \item If $8\mid n$ and $2\nmid m$, then $\chi_{n,2}(G(m,j))$ does not exist.
        \item If $8\mid n$, $2\mid m$, and $2\nmid j$, then $\chi_{n,2}(G(m,j))$ exists.
        \item If $8\mid n$, $2\mid m$, $4\nmid m$, then $\chi_{n,2}(G(m,j))$ exists if and only if $16 \nmid n$.
        \item If $8\mid n$, $4\mid m$, and $2\mid j$, then existence of $\chi_{n,2}(G(m,j))$ is not currently known.
    \end{enumerate}
\end{theorem}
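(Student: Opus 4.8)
The plan is to reduce everything to the $k=1$ results via the inequality $\chi_{n,2}(G)\le\chi_{n/2,1}(G)$, which holds for every even $n$ by the third bullet of Theorem~\ref{thm: k mod n units and divisor first relations} (with $d=2$), supplemented by two averaging arguments for the nonexistence parts and one explicit labeling. I would treat the five cases in three groups: the existence claims (1), (3), and the $16\nmid n$ direction of (4); the nonexistence claims (2) and the $16\mid n$ direction of (4); and (5), which inherits the open status of the corresponding $k=1$ question.

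\emph{Existence.} For (1): if $n$ is odd then $(4,n)=1$, so $4\alpha\equiv 2\mod n$ is solvable and the constant labeling by $\alpha$ works; if $n$ is even with $8\nmid n$, then $4\nmid n/2$, so $\chi_{n/2,1}(G(m,j))$ exists by Theorem~\ref{lem: k=1 main lem for petersen n,k's}(1) and the reduction finishes it. For (3): I would use the labeling $\ell(v_i)=\ell(u_i)$ equal to $1$ when $i$ is odd and to $0$ when $i$ is even (well defined on $\Z_m$ since $2\mid m$); because $2\nmid j$, every closed neighborhood of $G(m,j)$ then contains exactly two labels equal to $1$ and two equal to $0$, so its label sum is $2$, and this is a closed coloring with remainder $2\mod n$ for every $n$. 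For the $16\nmid n$ direction of (4) --- so $8\mid n$, $16\nmid n$, $2\mid m$, $2\mid j$, $4\nmid m$ --- we have $4\mid n/2$ and $8\nmid n/2$, so $\chi_{n/2,1}(G(m,j))$ exists by the first bullet of Lemma~\ref{lem: k=1, (4,n)=4, m even, 4 not divide m, petersen nk's}, and the reduction again applies.

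\emph{Nonexistence.} For (2): given a closed coloring $\ell$ with remainder $2\mod n$, set $V=\sum_{i\in\Z_m}\ell(v_i)$ and $U=\sum_{i\in\Z_m}\ell(u_i)$; summing the closed-neighborhood congruences over the exterior vertices and then over the interior vertices, as in Lemma~\ref{lem: k=1 and (4,n)=4 nothing for m odd, petersen n,k's}, gives $3V+U\equiv 2m$ and $V+3U\equiv 2m\mod n$. Subtracting yields $2(V-U)\equiv 0\mod n$ and adding yields $4(V+U-m)\equiv 0\mod n$; since $8\mid n$, the first forces $V+U$ to be even and the second forces $V+U\equiv m\mod 2$, so $m$ is even, contradicting $2\nmid m$. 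For the $16\mid n$ direction of (4) --- so $16\mid n$, $2\mid m$, $2\mid j$, $4\nmid m$ --- I would refine this following Lemma~\ref{lem: k=1, (4,n)=4, m even, 4 not divide m, petersen nk's}, writing $V^\pi=\sum_{i\in 2\Z_m+\pi}\ell(v_i)$ and $U^\pi=\sum_{i\in 2\Z_m+\pi}\ell(u_i)$. Because $2\mid j$, summing the closed-neighborhood congruences over each parity class gives $V^\pi+2V^{\pi+[1]}+U^\pi\equiv m\mod n$ and $V^\pi+3U^\pi\equiv m\mod n$, i.e.\ exactly Equations~\eqref{eq7.1} with $\frac{m}{2}$ replaced by $m$. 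Running the same elimination (introducing $\delta_{[0]},\delta_{[1]},\delta\in\{0,1\}$ precisely as there) reduces the entire question to the divisibility $4\mid\bigl[m+(\delta_{[1]}+\delta_{[0]})\frac{n}{4}+\delta\frac{n}{2}\bigr]$; when $16\mid n$ both $\frac{n}{4}$ and $\frac{n}{2}$ are $\equiv 0\mod 4$, so this forces $4\mid m$, a contradiction.

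For (5), the reduction $\chi_{n,2}(G)\le\chi_{n/2,1}(G)$ ties the problem to the existence of $\chi_{n/2,1}(G(m,j))$, which is precisely the question left open in Theorem~\ref{lem: k=1 main lem for petersen n,k's}(7)--(8) and which no other technique has so far resolved for $\chi_{n,2}(G(m,j))$ either. I expect the $16\mid n$ direction of (4) to be the main obstacle: one has to transplant the lengthy $\delta$-bookkeeping of Lemma~\ref{lem: k=1, (4,n)=4, m even, 4 not divide m, petersen nk's} essentially verbatim, confirm that the sole change is the substitution of $m$ for $\frac{m}{2}$, and keep the decision-tree hypotheses straight throughout --- in particular that excluding case (3) forces $2\mid j$ in cases (4) and (5).
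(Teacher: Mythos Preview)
Your argument is correct and, for the existence claims, somewhat slicker than the paper's. The paper does not use the reduction $\chi_{n,2}(G)\le\chi_{n/2,1}(G)$ at all; instead it builds fresh explicit colorings in each case. For part~(1) with $4\mid n$ and $8\nmid n$ it solves $4\alpha\equiv 2+\tfrac{n}{2}\mod n$ and uses the two-constant labeling $\alpha$ on exterior vertices, $\alpha+\tfrac{n}{2}$ on interior vertices (Lemma~\ref{lem: k=2 lem 1}); for the $16\nmid n$ half of~(4) it repeats the full parity-class ansatz of Lemma~\ref{lem: k=1, (4,n)=4, m even, 4 not divide m, petersen nk's} with $2$ in place of $\tfrac{m}{2}$ and checks the resulting divisibility (Lemma~\ref{lem: k=2 lem 3}). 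Your reduction sidesteps both constructions by importing the $k=1$ existence results directly, at the cost of invoking Theorem~\ref{thm: k mod n units and divisor first relations}. For parts~(2), (3), and the $16\mid n$ half of~(4) your arguments coincide with the paper's (Lemmas~\ref{lem: k=2 lem 1}, \ref{lem: k=2 lem 2}, \ref{lem: k=2 lem 3}) up to cosmetic differences --- opposite parity convention in the labeling for~(3), and the paper simplifies the final divisibility in~(4) to $2\mid\bigl[\tfrac{m}{2}+(\delta_{[1]}+\delta_{[0]})\tfrac{n}{8}\bigr]$ rather than your $4\mid[\cdots]$ form, but the content is identical. Your reading that~(4) and~(5) implicitly assume $2\mid j$ matches how the paper actually proves them.
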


The proof of Theorem \ref{thm: k=2 summary} follows from Theorem \ref{lem: k=1 main lem for petersen n,k's} and Lemmas~\ref{lem: k=2 lem 1}, \ref{lem: k=2 lem 2}, and \ref{lem: k=2 lem 3}.
For $k=2$, Figure \ref{fig: k=2 Petersen pic} displays a visual representation for the existence of a closed coloring   of $G(m,j)$ with remainder $ k\mod n $.
    \begin{figure}[H]
        \centering
        \begin{tikzpicture}[row sep=1cm, column sep=1cm]
          
          \fill[lightgray] (2.5, .5) rectangle (5, 4.5);
          \fill[lightgray] (5, .5) rectangle (7.5, 2.5);
          \fill[lightgray] (5, 3.5) rectangle (7.5, 4.5);
          \fill[lightgray] (5, 2.5) -- (5, 3.5) -- (7.5, 3.5) -- cycle;
          \fill[lightgray] (5, 2.5) -- (7.5, 2.5) -- (7.5, 3.5) -- cycle;
          \fill[lightgray] (5, 2.5) -- (7.5, 2.5) -- (7.5, 3.5) -- cycle;
          \fill[lightgray] (5, 2.5) -- (7.5, 2.5) -- (7.5, 3.5) -- cycle;
          \fill[lightgray] (7.5, .5) -- (7.5, 1.5) -- (10, 1.5) -- cycle;
          \fill[lightgray] (7.5, 1.5) -- (7.5, 2.5) -- (10, 2.5) -- cycle;
          \fill[lightgray] (7.5, 2.5) -- (7.5, 3.5) -- (10, 3.5) -- cycle;
          \fill[lightgray] (7.5, 2.5) -- (8.75, 2.5) -- (8.75, 3) -- cycle;
          
          \fill[pattern=dots] (7.5, 3.5) rectangle (10, 4.5);
          \fill[pattern=dots] (8.75, 2.5) -- (8.75, 3) -- (10, 3.5) -- (10, 2.5) -- cycle;

          \draw (-.5, .5) -- (-.5, 4.5);
          
          \draw (2.5, .5) -- (2.5, 5.5);
          \draw (5, .5) -- (5, 5.5);
          \draw (7.5, .5) -- (7.5, 5.5);
          \draw (10, .5) -- (10, 5.5);

          \draw (-.5, .5) -- (10, .5);
          \draw (-.5, 1.5) -- (10, 1.5);
          \draw (-.5, 2.5) -- (10, 2.5);
          \draw (-.5, 3.5) -- (10, 3.5);
          \draw (-.5, 4.5) -- (10, 4.5);

          \draw (2.5, 5.5) -- (10, 5.5);

          \draw (7.5, 2.5) -- (10, 3.5); 
          \draw (7.5, 1.5) -- (10, 2.5); 
          \draw (7.5, .5) -- (10, 1.5);
          
          \node at (1,4) {$2\nmid m$};
          \node at (1,3) {$2\mid m$ but $4\nmid m$};
          \node at (1,2) {$4\mid m$ but $8\nmid m$};
          \node at (1,1) {$8\mid m$};
          
          \node at (3.75,5) {$4\nmid n$};
          \node at (6.25,5) {$4\mid n$ but $8\nmid n$};
          \node at (8.75,5) {$8\mid n$};
          
          \node at (9.5, .85) {?};
          \node at (9.5, 1.85) {?};
          \node at (9.4, 2.85) {$16 \mid n$};
          
        \end{tikzpicture}
        \caption{Diagonals: $2\nmid j$ Northwest, $2 \mid j$ Southeast\\
                Shaded Regions: $\chi_{n,2}(G(m,j))$ exists. \\
                Dotted Regions: $\chi_{n,2}(G(m,j))$ does not exist.}
        \label{fig: k=2 Petersen pic}
    \end{figure}

\begin{lemma}\label{lem: k=2 lem 1}
    Let $n,m,j\in\Z^+$. Suppose $4\mid n$ and $2\nmid m$. Then a closed coloring of $G(m,j)$ with remainder $2 \mod n$ exists if and only if $8\nmid n$.
\end{lemma}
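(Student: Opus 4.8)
The plan is to establish the two implications by different means: a double-counting argument to rule out existence when $8\mid n$, and an explicit labeling that is constant on the exterior vertices and constant on the interior vertices to produce a closed coloring when $8\nmid n$.

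\emph{The nonexistence direction.} Assume $8\mid n$ and that $\ell$ is a closed coloring of $G(m,j)$ with remainder $2\bmod n$; I would then argue in the spirit of Lemma \ref{lem: k=1 and (4,n)=4 nothing for m odd, petersen n,k's}. Put $V=\sum_{i\in\Z_m}\ell(v_i)$ and $U=\sum_{i\in\Z_m}\ell(u_i)$. Summing $\sum_{u\in N[v_i]}\ell(u)\equiv 2\bmod n$ over all exterior $i$ and $\sum_{u\in N[u_i]}\ell(u)\equiv 2\bmod n$ over all interior $i$ gives $3V+U\equiv 2m$ and $V+3U\equiv 2m\bmod n$, since in the first sum the coefficient of each $\ell(v_i)$ is $3$ and of each $\ell(u_i)$ is $1$, and symmetrically in the second. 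Adding these, $4(V+U)\equiv 4m\bmod n$, so because $8\mid n$ we get $V+U\equiv m\bmod 2$, and hence $V+U$ is odd as $m$ is odd. Subtracting instead, $2(V-U)\equiv 0\bmod n$, so because $4\mid n$ we get $V-U\equiv 0\bmod 2$. Since $V+U$ and $V-U$ have the same parity, this is a contradiction, so no such $\ell$ exists.

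\emph{The existence direction.} Assume $8\nmid n$; combined with $4\mid n$ this says $\frac n2\equiv 2\bmod 4$. I would look for a closed coloring with $\ell(v_i)=\alpha$ for all $i$ and $\ell(u_i)=\beta$ for all $i$. Since each $v_i$ is adjacent to $v_{i\pm[1]}$ and $u_i$ and each $u_i$ is adjacent to $u_{i\pm[j]}$ and $v_i$, this is a closed coloring with remainder $2\bmod n$ exactly when $3\alpha+\beta\equiv 2$ and $\alpha+3\beta\equiv 2\bmod n$. These force $2(\alpha-\beta)\equiv 0\bmod n$; the choice $\beta=\alpha$ cannot work, as it would require $4\alpha\equiv 2\bmod n$ while $4\mid n$, so I would take $\beta=\alpha+\frac n2$ instead, which collapses both congruences to the single condition $4\alpha\equiv 2-\frac n2\bmod n$. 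This is solvable if and only if $(4,n)=4$ divides $2-\frac n2$, i.e.\ iff $4\mid 2-\frac n2$, which holds precisely because $\frac n2\equiv 2\bmod 4$. Any such $\alpha$ together with $\beta=\alpha+\frac n2$ then gives the desired closed coloring. Properness is not asked for here, but if wanted one can pass to a proper closed coloring via Theorem \ref{thm: remainder k mod n existence criteria and upper bound by n chi}.

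The computations themselves are short, so I do not expect a serious obstacle. The one delicate point is in the nonexistence direction: the additive relation $4(V+U)\equiv 4m$ is automatic modulo $4$, so the whole argument hinges on extracting the extra factor of $2$ from $8\mid n$ and playing it off against the oddness of $m$ and the independent parity constraint $V-U\equiv 0\bmod 2$ coming from the subtractive relation. Getting that bookkeeping exactly right is the only place I would slow down.
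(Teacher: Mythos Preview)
Your proof is correct and follows essentially the same approach as the paper: both directions use the same ingredients (the double-counting identities $3V+U\equiv 2m$, $V+3U\equiv 2m$ for nonexistence, and the constant-on-exterior/constant-on-interior ansatz with $\beta=\alpha+\tfrac{n}{2}$ for existence). Your nonexistence argument is packaged slightly differently---you extract a direct parity contradiction from $V+U$ versus $V-U$, whereas the paper parametrizes $V\equiv U+\delta\tfrac{n}{2}$ and analyzes solvability of $2m\equiv 4U+\delta\tfrac{n}{2}$---but the underlying arithmetic is the same.
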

\begin{proof}
    We follow notation and ideas similar to Lemma \ref{lem: k=1 and (4,n)=4 nothing for m odd, petersen n,k's} with the exception that $2m = \sum_{i\in\Z_m}2$ replaces $m = \sum_{i\in\Z_m}1$. Hence the new equations become 
    \begin{align}
    2m \equiv (3V + U) \mod n \quad \mbox{ and }\label{eq7.3}\\
    2m \equiv (V+3U) \mod n.  \qquad\ \ \ \mbox{}\nonumber 
    \end{align}
    
    By methods similar to Lemma \ref{lem: k=1, (4,n)=4, m even, 4 not divide m, petersen nk's}, we see that $2V \equiv 2U \mod n$, hence $V\equiv (U+\delta\frac{n}{2}) \mod n$ for some $\delta \in\{0,1\}$, and our Equations~\eqref{eq7.3} reduce to one single equation
    \[2m\equiv \left( 4U +\delta\frac{n}{2}\right) \mod n.\]
        This equation has a solution if and only if  
    \[2\mid \left(m + \delta\frac{n}{4} \right).\] 
    As $m$ is odd, this forces $\delta = 1$ and $\frac{n}{4}$ to be odd. Thus, $8\nmid n$ is necessary for the existence of a closed coloring with remainder $2 \mod n$.

    If $8\nmid n$, a closed coloring of $G(m,j)$ with remainder $2 \mod n$ may be obtained by the techniques found in   
    Lemma \ref{lem: k=1 and (4,n)=1,2 for petersen n,k's} using a
    labeling that is constant on the exterior vertices and constant on the interior vertices. For this, we must be able to solve the equations
    \[3\alpha + \beta \equiv 2 \mod n, \]
    \[\alpha + 3\beta \equiv 2 \mod n\ \]
    for some $\alpha, \beta \in \Z$.

    This requires $2\alpha \equiv 2\beta \mod n$ and, in fact, we will take
    $\beta = \alpha + \frac{n}{2}.$
    With this ansatz, solving the desired equations is equivalent to solving
    \[ 4\alpha\equiv \left( 2 + \frac{n}{2} \right) \mod n.\]
    In turn, this has a solution if and only if $2\mid (1+\frac{n}{4})$. As $\frac{n}{4}$ is odd, we are done.
\end{proof}

\begin{lemma}\label{lem: k=2 lem 2}
    Let $n,m,j\in\Z^+$. Suppose $2\mid m$ and $2\nmid j$. Then there exists a closed coloring  of $G(m,j)$ with remainder $2 \mod n$. 
\end{lemma}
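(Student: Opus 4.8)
The plan is to exhibit an explicit $\{0,1\}$-valued labeling in the spirit of the construction used in Lemma~\ref{lem: k=1, (4,n)=4, m even, j odd, petersen nk's}, but with period $2$ instead of period $4$. Since $2\mid m$, the parity of $i$ is a well-defined function of $i\in\Z_m$, so I would set
\[ \ell(v_i)=\ell(u_i)=\begin{cases} 1 & \text{if } 2\mid i,\\ 0 & \text{else.}\end{cases}\]
This labeling takes values in $\Z$ (indeed in $\{0,1\}$), so the only thing to check is that every closed-neighborhood label sum is congruent to $2\mod n$; in fact I expect each such sum to equal $2$ exactly, which settles the claim for every $n\in\Z^+$ simultaneously.

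Next I would verify the closed-neighborhood condition vertex by vertex, using $N[v_i]=\{v_{i-[1]},v_i,v_{i+[1]},u_i\}$ and $N[u_i]=\{u_{i-[j]},u_i,u_{i+[j]},v_i\}$. For an exterior vertex $v_i$: if $i$ is even, then $v_{i\pm[1]}$ are labeled $0$ while $v_i$ and $u_i$ are labeled $1$, giving sum $1+0+0+1=2$; if $i$ is odd, then $v_{i\pm[1]}$ are labeled $1$ while $v_i$ and $u_i$ are labeled $0$, giving sum $0+1+1+0=2$. For an interior vertex $u_i$ this is where $2\nmid j$ is used: since $j$ is odd, $i\pm[j]$ has parity opposite to that of $i$, so $u_{i-[j]}$ and $u_{i+[j]}$ carry the label opposite to $u_i$ (and to $v_i$). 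Hence if $i$ is even the sum is $0+1+0+1=2$, and if $i$ is odd the sum is $1+0+1+0=2$. In all cases the closed-neighborhood sum is $2$, which is $\equiv 2\mod n$, so $\ell$ is a closed coloring of $G(m,j)$ with remainder $2\mod n$.

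There is no serious obstacle here; the entire content is in choosing the right labeling. It is worth recording precisely where the two hypotheses enter: $2\mid m$ is needed so that the alternating pattern around the outer $m$-cycle is consistent (i.e.\ so the labeling is well-defined on $\Z_m$ at all), and $2\nmid j$ is needed so that traversing an interior edge $u_iu_{i+[j]}$ flips the parity, which is exactly what makes the interior closed neighborhoods sum to $2$. If either hypothesis fails this particular construction breaks, consistent with the separate (and harder) analyses carried out elsewhere for the remaining cases.
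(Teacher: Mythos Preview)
Your proof is correct and matches the paper's approach exactly: the paper defines the same labeling $\ell(v_i)=\ell(u_i)=1$ if $2\mid i$ and $0$ otherwise, and simply asserts that verification is straightforward. Your case analysis is more detailed than the paper's but entirely in the same spirit.
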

\begin{proof}
    Define a $\Z$-labeling $\ell$ of $G(m,j)$ by
    \[ \ell(v_i) = \ell(u_i) = \begin{cases}
        1 & \text{if } 2\mid i,\\
        0 & \text{else}.
    \end{cases} \]
    It is straightforward to verify that this gives a closed coloring with remainder $2\mod n$.
\end{proof}

\begin{lemma}\label{lem: k=2 lem 3}
     Let $n,m,j\in\Z^+$. Suppose $8\mid n$, $2\mid m$, $4\nmid m$, and $2\mid j$. Then there exists a closed coloring of $G(m,j)$ with remainder $2 \mod n$ if and only if $16\nmid n$.
\end{lemma}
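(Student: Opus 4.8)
The plan is to run, for $k=2$, the same two-part argument used for Lemma~\ref{lem: k=1, (4,n)=4, m even, 4 not divide m, petersen nk's} (with a flavor of Lemma~\ref{lem: k=2 lem 1} for how the ``$\sum 2$'' bookkeeping enters): first a necessity argument built from the partial sums $V^\pi, U^\pi$, then a sufficiency argument that produces a closed coloring constant with value $a_\pi$ on each $\{v_{2i+\pi}\mid i\in\Z_m\}$ and constant with value $b_\pi$ on each $\{u_{2i+\pi}\mid i\in\Z_m\}$. The only structural change from the $k=1$ case is that summing the defining congruence over $i\in 2\Z_m+\pi$ now contributes $\sum_{i\in 2\Z_m+\pi}2 = m$ on the left-hand side (rather than $\tfrac m2$), and that the target value in the constant-on-each-coset construction becomes $2$ (rather than $1$).

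For necessity I would assume a closed coloring $\ell$ of $G(m,j)$ with remainder $2\mod n$ exists and, using $2\mid j$ exactly as in Lemma~\ref{lem: k=1, (4,n)=4, m even, 4 not divide m, petersen nk's}, obtain
\[ m \equiv V^\pi + 2V^{\pi+[1]} + U^\pi \equiv V^\pi + 3U^\pi \mod n \qquad (\pi\in\{[0],[1]\}). \]
Subtracting gives $2V^{\pi+[1]}\equiv 2U^\pi\mod n$, hence $U^\pi\equiv V^{\pi+[1]}+\delta_\pi\tfrac n2\mod n$ with $\delta_\pi\in\{0,1\}$; substituting back reduces both congruences to $m\equiv V^\pi+3V^{\pi+[1]}+\delta_\pi\tfrac n2\mod n$; comparing the two values of $\pi$ yields $V^{[1]}\equiv V^{[0]}+(\delta_{[1]}-\delta_{[0]})\tfrac n4+\delta\tfrac n2\mod n$ for some $\delta\in\{0,1\}$; and one more substitution produces the requirement that $4$ divide $m+(\delta_{[1]}+\delta_{[0]})\tfrac n4+\delta\tfrac n2$ for the resulting $\delta_{[0]},\delta_{[1]},\delta$. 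All of these halvings are legitimate because $8\mid n$. Now $8\mid n$ forces $\tfrac n2\equiv 0\mod 4$, and $16\mid n$ additionally forces $\tfrac n4\equiv 0\mod 4$, so in the case $16\mid n$ the requirement degenerates to $4\mid m$, contradicting $4\nmid m$; hence no such coloring exists when $16\mid n$.

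For sufficiency I would assume $16\nmid n$, so that $\tfrac n8$ is odd. Searching for a labeling constant with value $a_\pi$ on $\{v_{2i+\pi}\}$ and constant with value $b_\pi$ on $\{u_{2i+\pi}\}$, this is a closed coloring with remainder $2\mod n$ precisely when $2\equiv a_\pi+2a_{\pi+[1]}+b_\pi\equiv a_\pi+3b_\pi\mod n$ for $\pi\in\{[0],[1]\}$ — the same system as above with $m$ replaced by $2$. The identical reduction shows solvability is equivalent to choosing $\delta_{[0]},\delta_{[1]},\delta\in\{0,1\}$ with $4\mid 2+(\delta_{[1]}+\delta_{[0]})\tfrac n4+\delta\tfrac n2$; since $\tfrac n8$ is odd we have $\tfrac n4\equiv 2\mod 4$ and $\tfrac n2\equiv 0\mod 4$, so $\delta_{[0]}=1$, $\delta_{[1]}=0$, $\delta=0$ works, and unwinding the reduction yields explicit integer labels $a_\pi,b_\pi$. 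Together with the previous paragraph this gives the claimed equivalence. The routine but error-prone part — and the only real obstacle — is tracking the coefficients of $\tfrac n4$ and $\tfrac n2$ modulo $4$ through the repeated halvings and checking that each step ``$2V\equiv\cdots\mod n$'' is solved by an integer of the asserted shape; this can be sidestepped by quoting the corresponding computation in Lemma~\ref{lem: k=1, (4,n)=4, m even, 4 not divide m, petersen nk's} and specializing to $8\mid n$ and $4\nmid m$.
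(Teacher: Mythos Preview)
Your proposal is correct and is essentially the paper's own argument: both run the computation of Lemma~\ref{lem: k=1, (4,n)=4, m even, 4 not divide m, petersen nk's} with $\tfrac m2$ replaced by $m$ (necessity) and by $2$ (sufficiency), reducing to the divisibility condition $4\mid m+(\delta_{[1]}+\delta_{[0]})\tfrac n4+\delta\tfrac n2$ and its analogue with $m$ replaced by $2$. The only cosmetic difference is that the paper divides this condition by $2$ before reading off the parity of $\tfrac n8$, whereas you work modulo $4$ directly.
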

\begin{proof}
    We follow the ideas and notation from Lemma \ref{lem: k=1, (4,n)=4, m even, 4 not divide m, petersen nk's}. 
    First, suppose $\chi_{n,2}(G(m,j))$ exists. Since $2\mid j$, we get the system of equations
    \begin{align*}
     m \equiv 
         (V^\pi + 2V^{\pi+[1]} + U^\pi) \mod n\ & \quad \mbox{ and }\\
    m \equiv 
         (V^\pi + 3U^\pi) \mod n, &
    \end{align*}
    which is Equations \eqref{eq7.1} with $\frac m2$ replaced by $m$.
    As seen in the proof of Lemma \ref{lem: k=1, (4,n)=4, m even, 4 not divide m, petersen nk's}, this necessitates $4\mid [m + (\delta_{[1]}+\delta_{[0]})\frac{n}{4} + \delta \frac{n}{2}]$, hence 
    \[2\mid \left[\frac m2 + (\delta_{[1]}+\delta_{[0]})\frac{n}{8} \right].\]
    In particular, for $\frac{m}{2} \equiv 1 \mod 2$, we must have $\frac{n}{8}\equiv 1 \mod 2$ as well.  

    To examine the existence of closed colorings with remainder $2\mod n$,
    look for one that 
    is constant on each of the sets $\{v_{2i+\pi} \mid i \in \Z_m\}$ and $\{u_{2i+\pi}\mid i \in \Z_m\}$. Write the labels as $a_\pi$ and $b_\pi$, respectively. Then a closed coloring with remainder 
    $2\mod n$ of this form exists if and only if
    \begin{align*}
    2 \equiv (a_\pi + 2a_{\pi+[1]} + b_\pi) \mod n & \quad \mbox{ and }\\
    2 \equiv (a_\pi + 3b_\pi) \mod n &
    \end{align*}
    can be solved, which is Equations \eqref{eq7.1} with $\frac m2, V^\pi, U^\pi$ replaced by $2, a_\pi, b_\pi$. As seen in the proof of Lemma \ref{lem: k=1, (4,n)=4, m even, 4 not divide m, petersen nk's}, this can be done if and only if $4\mid [2 + (\delta_{[1]}+\delta_{[0]})\frac{n}{4} + \delta \frac{n}{2}]$ for some $\delta_{[1]},\delta_{[0]},\delta \in \{0,1\}$, which simplifies to
    \[2\mid \left[1 + (\delta_{[1]}+\delta_{[0]})\frac{n}{8}\right].\]
    In turn, this happens if and only if $16\nmid n$. 
\end{proof}

\section{Concluding Remarks}

There still remain a few cases of high divisibility by $2$ where existence of a closed coloring with remainder $k\mod n$ for generalized Petersen graphs is undetermined, see Figures \ref{fig: k=1 Petersen pic} and \ref{fig: k=2 Petersen pic}. After this, determining the exact value of $\chi_{n,k}(G(m,j))$ would be desirable. 


\bibliographystyle{abbrvnat}
\bibliography{refs}

\end{document}